\documentclass[preprint,3p]{elsarticle}
%
%
%
\usepackage{graphicx}        
\usepackage{amssymb}
\usepackage{amsmath}
\usepackage{amsthm}
\usepackage{url}
\usepackage{color}


\newtheorem{Theorem}{Theorem}[section]{\bfseries}{\itshape}
\newtheorem{theorem}[Theorem]{Theorem}{\bfseries}{\itshape}
{\bfseries}{\itshape}
\newtheorem{lemma}[Theorem]{Lemma}{\bfseries}{\itshape}
{\bfseries}{\itshape}
{\bfseries}{\itshape}
{\bfseries}{\itshape}
{\bfseries}{\itshape}
\newtheorem{remark}[Theorem]{Remark}{\bfseries}{\itshape}
{\bfseries}{\itshape}
{\bfseries}{\itshape}
\newtheorem{assumption}[Theorem]{Assumption}{\bfseries}{\itshape}
\newtheorem{definition}[Theorem]{Definition}{\bfseries}{\itshape}
\newtheorem{algorithm}[Theorem]{Algorithm}{\bfseries}{\itshape}
{\bfseries}{\itshape}
\newcommand\bi{{\mathbf i}}
\newcommand\BbbR{{\mathbb R}}
\newcommand\BbbC{{\mathbb C}}
\newcommand\BbbN{{\mathbb N}}
\newcommand{\Idx}{\mathcal{I}}
\newcommand{\Idy}{\mathcal{J}}
\newcommand{\sons}{\operatorname*{sons}}
\newcommand{\father}{\operatorname*{father}}
\newcommand{\descendants}{\operatorname*{descendants}}
\newcommand{\level}{\operatorname*{level}}
\newcommand{\depth}{\operatorname*{depth}}
\newcommand{\diam}{\operatorname*{diam}\nolimits}
\newcommand{\dist}{\operatorname*{dist}\nolimits}

\newcommand{\lfiII}{\mathcal{L}_{\Idx\times\Idx}^-}
\newcommand{\lfsaIJ}{\mathcal{L}_{\Idx\times\Idy}^{standard,+}}
\newcommand{\lfsiIJ}{\mathcal{L}_{\Idx\times\Idy}^{standard,-}}
\newcommand{\lfsaII}{\mathcal{L}_{\Idx\times\Idx}^{standard,+}}
\newcommand{\lfsiII}{\mathcal{L}_{\Idx\times\Idx}^{standard,-}}
\newcommand{\lfpa}{\mathcal{L}^{parabolic,+}}
\newcommand{\lfIJ}{\mathcal{L}_{\Idx\times\Idy}}
\newcommand{\lfaIJ}{\mathcal{L}_{\Idx\times\Idy}^+}
\newcommand{\lfiIJ}{\mathcal{L}_{\Idx\times\Idy}^-}
\newcommand{\ellipse}{\mathcal{E}}

\newcommand{\eex}{\hbox{}\hfill\rule{0.8ex}{0.8ex}}
\newcommand{\eremk}{\eex}

\newcommand{\amp}{A}
\newcommand{\phase}{\Phi}
\newcommand{\bs}[1]{\boldsymbol #1}
\newcommand{\punkt}{\boldsymbol{\cdot}}
\newcommand{\enorm}{\mathfrak{n}}

\numberwithin{equation}{section}
\newenvironment{numberedproof}[1]{\textbf{Proof {#1}:}}{\mbox{}\hfill{\hbox{$\Box$}}\par\bigskip}
\newif\iftechreport
\techreporttrue
\iftechreport
  \typeout{======================================}
  \typeout{==      making extended version    ===}
  \typeout{======================================}
\else
  \typeout{======================================}
  \typeout{==           making paper          ===}
  \typeout{======================================}
\fi

\begin{document}
\begin{frontmatter}

\iftechreport 
\title{An analysis of a butterfly algorithm}
\else
\title{An analysis of a butterfly algorithm}
\fi
 \author[add1]{S. B\"orm} 
\ead[add1]{sb@informatik.uni-kiel.de}
 \author[add1]{C. B\"orst} 
\ead[add1]{chb@informatik.uni-kiel.de}
 \author[add2]{J.M. Melenk}
\ead[add2]{melenk@tuwien.ac.at}
\address[add1]{Institut f\"ur Informatik, Universit\"at Kiel, Christian-Albrechts-Platz 4, D-24118 Kiel}
\address[add2]{Technische Universit\"at Wien, Wiedner Hauptstra\ss e 8-10, A-1040 Wien}


\begin{abstract}
Butterfly algorithms are an effective multilevel technique to compress discretizations of integral operators
with highly oscillatory kernel functions. The particular version of the butterfly algorithm
presented in \cite{CADEYI09} realizes the transfer between levels by Chebyshev interpolation.
We present a refinement of the analysis given in \cite{demanet-ferrara-maxwell-poulson-ying10} 
for this particular algorithm.  
\end{abstract}
\begin{keyword}
butterfly algorithm, stability of iterated polynomial interpolation
\end{keyword}
\end{frontmatter}
%
\section{Introduction}


Nonlocal operators with highly oscillatory kernel functions  arise in many applications. Prominent examples of such operators 
include the Fourier transform, special function transforms, and integral operators whose kernels are connected 
to the high-frequency Helm\-holtz or Maxwell's equations. Many algorithms with log-linear complexity 
have been proposed in the past. A very incomplete list includes: for the non-uniform Fourier transform 
\cite{keiner-kunis-potts09} (and references therein), \cite{ayliner-chew-song01,CADEYI09,kunis-melzer12}; 
for special function transforms \cite{oneil-woolfe-rokhlin10}; 
for Helmholtz and Maxwell integral operators
\cite{BR91,RO93,BOMI96,GRHUROWA98,DA00,chen-et-al06,ENYI07,MESCDA12,BEKUVE15,boerm-melenk15}.  

Underlying the log-linear complexity of the above algorithms for the treatment of high-frequency kernels is 
the use of a multilevel structure and in particular efficient transfer operators between the levels. In the language
of linear algebra and with $L$ denoting the number of levels,  the matrix realizing the operator is (approximately)
factored into $O(L)$ matrices, each of which can be treated with nearly optimal complexity 
$O(N \log^\alpha N)$, where $N$ denotes the problem size. This observation is formulated explicitly 
in \cite{LIYAYI15} in connection with a class of butterfly algorithms. 

A central issue for a complete error analysis of such algorithms is that of stability. That is, since 
the factorization into $O(L)$ factors results from an approximation process, the application of each of 
the $O(L)$ steps incurs an error whose propagation in the course of a realization of the matrix-vector 
multiplications needs to be controlled. 
For some algorithms, a full error analysis is possible. Here, we mention the fast multipole method \cite{RO93} 
(and its stable realizations \cite{DA00,GRHUROWA98}) for the Helmholtz kernel, which exploits 
that a specific kernel is analyzed. 
One tool to construct suitable factorizations for more general 
high-frequency kernels is polynomial interpolation (typically tensor product Chebyshev interpolation) as proposed, 
e.g., in \cite{kunis-melzer12,CADEYI09} for Fourier transforms and generalizations of the Fourier transform 
and in \cite{MESCDA12,boerm-melenk15} for Helmholtz kernels. 
A full analysis of the case of the (classical) 
Fourier transform is developed in \cite{kunis-melzer12} and for the algorithm proposed in \cite[Sec.~4]{CADEYI09} in 
\cite{demanet-ferrara-maxwell-poulson-ying10}.  For the procedure proposed in \cite{MESCDA12} 
for the Helmholtz kernel, a detailed analysis is given in \cite{boerm-melenk15}. 
Based on generalizations of the tools developed in the latter work \cite{boerm-melenk15} the novel contribution
of the present work is a sharper analysis of the butterfly algorithm proposed in \cite[Sec.~4]{CADEYI09} 
for general integral operators with oscillatory kernel functions. Indeed, the present analysis of the butterfly algorithm of 
\cite[Sec.~4]{CADEYI09} improves over \cite{demanet-ferrara-maxwell-poulson-ying10} and \cite{kunis-melzer12} 
in that the necessary relation between the underlying polynomial degree $m$ and the depth $L$ of the cluster tree 
is improved from $m \ge C L \log (\log L)$ to $m \ge C \log L$ (cf.~Theorem~\ref{thm:main} and 
Theorem~\ref{thm:helmholtz} in comparision to \cite[Sec.~{4.3}]{demanet-ferrara-maxwell-poulson-ying10}); 
we should, however, put this improvement in perspective by mentioning that a requirement $m \ge C L$ 
typically results from accuracy considerations. 

It is worth stressing that, although techniques that base the transfer between levels 
on polynomial interpolation are amenable to a rigorous stability analysis and may even lead to the desirable 
log-linear complexity, other techniques are available that are observed to perform better in practice. We refer the reader
to \cite{LIYAYI15} for an up-to-date discussion of techniques associated with the name ``butterfly algorithms'' 
and to \cite{BEKUVE15} for techniques related to \emph{adaptive cross approximation} (ACA). 

\medskip
In the present work, we consider kernel functions $k$ of the form 
\begin{equation}
\label{eq:kernel} 
k(x,y) = \exp(\bi \kappa \phase(x,y)) \amp(x,y)
\end{equation}
on a product $B_X \times  B_Y$, where $B_X$, $B_Y\subset\BbbR^d$ are axis-parallel boxes. 
The \emph{phase} function $\phase$ is assumed
to be real analytic on $B_X \times B_Y$, in particular, it is assumed to be
real-valued in this box. The \emph{amplitude}
function  $\amp$ is likewise assumed to be analytic, although it could be complex-valued or even vector-valued. 
The setting of interest for the parameter $\kappa \in \BbbR$  is that of
$\kappa\gg 1$. 

%
An outline of the paper is as follows. We continue the introduction in Section~\ref{sec:notation}
with notation that will be used throughout the paper. In Section~\ref{sec:heart-of-the-matter}, we 
discuss how a butterfly representation for kernels of the form (\ref{eq:kernel}) can be obtained with the aid
of an iterated Chebyshev interpolation procedure. The point of view taken in Section~\ref{sec:heart-of-the-matter}
is one of functions and their approximation in separable form. The following 
Section~\ref{sec:butterfly-on-matrix-level}, therefore, focuses on the realization of the 
butterfly structure on the matrix level. As alluded to in the introduction, butterfly structures are one 
of several techniques for highly oscillatory kernel. In Section~\ref{sec:DH2}, we discuss the  relation
of butterfly techniques with directional ${\mathcal H}^2$-matrices, \cite{BEKUVE15,BO15,boerm-melenk15}. 
The discussion in Section~\ref{sec:heart-of-the-matter} concentrated on the case of analytic 
kernel functions. Often, however, kernel functions are only asymptotically smooth, e.g., if they are 
functions of the Euclidean distance $\|x - y\|$. We propose in 
Section~\ref{sec:butterfly-asymptotically-smooth-kernels} to address this issue by combining the butterfly
structure with a block clustering based on the ``standard'' admissibility condition that takes the distance
of two clusters into account. We mention in passing that alternative approaches are possible to deal
with certain types of singularities (see, e.g., the transform technique advocated in 
\cite[Sec.~{1.1}]{CADEYI09} to account for the Euclidean distance 
$\|x - y\|$). 

Section~\ref{sec:analysis} is concerned with an analysis of the errors incurred by the approximations done
to obtain the butterfly structure, which is enforced by an iterated interpolation process. The stability
of one step of this process is analyzed in the univariate case in Lemma~\ref{lemma:univariate-case}; 
the multivariate case is inferred from that by tensor product arguments in 
Lemma~\ref{lemma:re-interpolation-one-step}. The final stability analysis of the iterated process
is given in Theorem~\ref{thm:nested-interpolation}. 

Section~\ref{sec:helmholtz} specializes the analysis of Theorem~\ref{thm:nested-interpolation} to the 
single layer operator for the Helmholtz equation. This operator is also used in the numerical examples 
in Section~\ref{sec:numerics}.

\subsection{Notation and preliminaries}
\label{sec:notation}

We start with some notation: $B_\varepsilon(z)$ denotes the (Euclidean) ball
of radius $\varepsilon>0$ centered at $z \in \BbbC^d$. For a bounded set $F \subset \BbbR^d$, we denote by 
$\diam_i F$, $i \in \{1,\ldots,d\}$,  
the extent of $F$ in the $i$-th direction: 
\begin{equation}
\label{eq:diam_i}
\diam_i F := \sup \{x_i \,|\, x \in F\} - \inf\{x_i\,|\, x \in F\}.
\end{equation}
For $\rho > 1$ the \emph{Bernstein elliptic discs} are given by 
${\ellipse}_\rho := \{z \in \BbbC\,|\, |z - 1| + |z + 1| < \rho + 1/\rho\}.$   
More generally, for $[a,b] \subset \BbbR$, we also use
the scaled and shifted elliptic discs 
\begin{align*}
{\ellipse}^{[a,b]}_\rho:= \frac{a+b}{2} + \frac{b-a}{2} {\ellipse}_\rho. 
\end{align*}
In a multivariate setting, we collect $d$ values $\rho_i > 1$, $i=1,\dots,d$, in the 
vector $\bs{\rho}$ and define, for intervals $[a_i,b_i] \subset \BbbR$, $i=1,\ldots,d$, the elliptic polydiscs
(henceforth simply called ``polydiscs'') 
\begin{align*}
{\ellipse}_{\bs{\rho}}& := \prod_{i=1}^d {\ellipse}_{\bs{\rho}_i}, 
&  
{\ellipse}^{[\bs{a},\bs{b}]}_{\bs{\rho}}& := \prod_{i=1}^d {\ellipse}^{[a_i,b_i]}_{\bs{\rho}_i}.  
\end{align*}
We will write 
${\ellipse}_{{\rho}}$ and 
${\ellipse}^{[\bs{a},\bs{b}]}_{{\rho}}$ if $\bs{\rho}_i =\rho$ for $i=1,\ldots,d$. 
Axis-parallel boxes are denoted
\begin{equation}
\label{eq:box}
[\bs{a},\bs{b}]:= B^{[\bs{a},\bs{b}]}:= \prod_{i=1}^d [a_i,b_i]. 
\end{equation}
In fact, throughout the text, a \emph{box} $B$ is always understood to be 
a set of the form (\ref{eq:box}). 
For vector-valued objects such as $\bs{\rho}$ we will use the notation $\bs{\rho} > 1$ in a componentwise 
sense. 

We employ a univariate polynomial interpolation operator $I_m:C([-1,1]) \rightarrow {\mathcal P}_m$ with 
Lebesgue constant $\Lambda_m$ (e.g., the Chebyshev interpolation operator). Here, 
${\mathcal P}_m = \operatorname*{span} \{x^i\,|\, 0\leq i\leq m\}$ is the space of (univariate) polynomials of degree $m$. 
We write ${\mathcal Q}_m:= \operatorname*{span} \{x_1^{i_1} x_2^{i_2} \cdots x_d^{i_d}\,|\, 0 \leq i_1,\ldots,i_d \leq m\}$ 
for the space of $d$-variate polynomials of degree $m$ (in each variable). Throughout the text, we will use 
\begin{equation}
\label{eq:M} 
M:= (m+1)^d = \operatorname*{dim} {\mathcal Q}_m. 
\end{equation}
The interpolation operator $I_m$ may be scaled and shifted to an interval
$[a,b] \subset \BbbR$ and is then denoted by $I^{[a,b]}_m$. 
Tensor product interpolation on the 
box $[\bs{a},\bs{b}]$ is correspondingly denoted $I^{[\bs{a},\bs{b}]}_m: C([\bs{a},\bs{b}]) \rightarrow {\mathcal Q}_m$. 
We recall the following error estimates:
\begin{lemma}
\label{lemma:polynomial-interpolation-error}
\begin{align}
\label{eq:error-estimate-tensor-product-interpolation-a} 
\|u - I^{[a,b]}_m[u]\|_{L^\infty([a,b])} &\leq  
( 1 +  \Lambda_m) \inf_{v \in {\mathcal P}_m} \|u - v\|_{L^\infty([a,b])},\\
\label{eq:error-estimate-tensor-product-interpolation} 
\|u - I^{[\bs{a},\bs{b}] }_m[u]\|_{L^\infty([\bs{a},\bs{b}])} &\leq  
\sum_{i=1}^d \Lambda_m^{i-1} (1 + \Lambda_m) \sup_{x \in [\bs{a},\bs{b}]} 
\inf_{v \in {\mathcal P}_m }
\|u_{x,i} - v\|_{L^\infty([a_i,b_i])}, 
\end{align}
where, for given $x \in [\bs{a},\bs{b}]$, 
 the univariate functions $u_{x,i}$ are defined by 
\begin{equation*}
t \mapsto u_{x,i}(t) = u(x_1,\ldots,x_{i-1},t,x_{i+1},\ldots,x_d).
\end{equation*}
\end{lemma}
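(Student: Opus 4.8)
The plan is to derive the univariate estimate \eqref{eq:error-estimate-tensor-product-interpolation-a} from the classical Lebesgue lemma, and then to obtain the multivariate estimate \eqref{eq:error-estimate-tensor-product-interpolation} by writing $I_m^{[\bs a,\bs b]}$ as a composition of $d$ univariate interpolation operators acting one coordinate at a time and telescoping.

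For \eqref{eq:error-estimate-tensor-product-interpolation-a}, recall that $I_m^{[a,b]}$ is a linear projection onto $\mathcal{P}_m$ and that, by definition of the Lebesgue constant (which is unchanged by the affine rescaling $[-1,1]\to[a,b]$), one has $\|I_m^{[a,b]}[w]\|_{L^\infty([a,b])}\le\Lambda_m\|w\|_{L^\infty([a,b])}$ for all $w\in C([a,b])$. For any $v\in\mathcal{P}_m$, the reproduction property $I_m^{[a,b]}[v]=v$ gives $u - I_m^{[a,b]}[u] = (u-v) - I_m^{[a,b]}[u-v]$, hence $\|u - I_m^{[a,b]}[u]\|_{L^\infty([a,b])}\le(1+\Lambda_m)\|u-v\|_{L^\infty([a,b])}$; taking the infimum over $v\in\mathcal{P}_m$ proves the claim.

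For \eqref{eq:error-estimate-tensor-product-interpolation}, let $I^{(i)}$ denote $I_m^{[a_i,b_i]}$ applied in the $i$-th coordinate with the remaining coordinates frozen, so that $I_m^{[\bs a,\bs b]} = I^{(1)}I^{(2)}\cdots I^{(d)}$. Each $I^{(i)}$ maps $C([\bs a,\bs b])$ into itself and satisfies $\|I^{(i)}[w]\|_{L^\infty([\bs a,\bs b])}\le\Lambda_m\|w\|_{L^\infty([\bs a,\bs b])}$, by applying the univariate bound on each slice and taking the supremum over the frozen coordinates. Summing the telescoping series $\sum_{j=1}^d\bigl(I^{(1)}\cdots I^{(j-1)}[u] - I^{(1)}\cdots I^{(j)}[u]\bigr)$ yields the identity
\begin{equation*}
u - I_m^{[\bs a,\bs b]}[u] = \sum_{j=1}^d I^{(1)}\cdots I^{(j-1)}\bigl[u - I^{(j)}[u]\bigr]
\end{equation*}
(with the empty product understood as the identity), and the operator bounds for $I^{(1)},\dots,I^{(j-1)}$ give
\begin{equation*}
\|u - I_m^{[\bs a,\bs b]}[u]\|_{L^\infty([\bs a,\bs b])}\le\sum_{j=1}^d\Lambda_m^{j-1}\,\|u - I^{(j)}[u]\|_{L^\infty([\bs a,\bs b])}.
\end{equation*}
Since $\|u - I^{(j)}[u]\|_{L^\infty([\bs a,\bs b])} = \sup_{x\in[\bs a,\bs b]}\|u_{x,j} - I_m^{[a_j,b_j]}[u_{x,j}]\|_{L^\infty([a_j,b_j])}$, bounding the slicewise error by \eqref{eq:error-estimate-tensor-product-interpolation-a} gives the asserted estimate.

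The only points that need care are of a bookkeeping nature: that each $I^{(i)}$ genuinely maps $C([\bs a,\bs b])$ into itself, so that the composition defining $I_m^{[\bs a,\bs b]}$ is well posed (this holds because interpolation at fixed nodes produces coefficients depending continuously on the frozen variables), and the elementary verification of the telescoping identity. I do not anticipate any substantive obstacle beyond these.
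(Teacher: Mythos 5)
The paper states Lemma~\ref{lemma:polynomial-interpolation-error} without proof, treating both bounds as standard facts. Your argument is correct and is the textbook one: \eqref{eq:error-estimate-tensor-product-interpolation-a} is the Lebesgue lemma using projectivity of $I_m^{[a,b]}$, and \eqref{eq:error-estimate-tensor-product-interpolation} follows from the telescoping decomposition of $I_m^{[\bs a,\bs b]}=I^{(1)}\cdots I^{(d)}$ together with the slicewise operator bound $\|I^{(i)}\|_{C\to C}\le\Lambda_m$; the bookkeeping points you flag (well-posedness of the composition, the telescoping identity) are handled correctly.
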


\noindent
It will sometimes be convenient to write the interpolation operator $I^{[\bs{a},\bs{b}]}_m$ explicitly as 
\begin{equation}
I^{[\bs{a},\bs{b}]}_m[f] = \sum_{i=1}^M f(\xi^{[\bs{a},\bs{b}]}_i) L_{i,[\bs{a},\bs{b}]}, 
\end{equation}
where 
$\xi^{[\bs{a},\bs{b}]}_i$, $i=1,\ldots,M$,
are the interpolation points and
$L_{i,[\bs{a},\bs{b}]}$, $i=1,\ldots,M$,
are the associated Lagrange interpolation polynomials. 
The following lemma is a variant of a result proved in \cite{CADEYI09}:

%
%
\begin{lemma} 
\label{lemma:G}
Let $\Omega \subset \BbbC^{2d}$ be open. Let $(x_0,y_0) \in \Omega$. 
Let $(x,y) \mapsto \widehat \phase(x,y)$ be analytic on $\Omega$.  
Then the function 
\begin{align*}
R_{x_0,y_0} (x,y)
:= \widehat\phase(x,y) - \widehat\phase(x,y_0) - \widehat\phase(x_0,y) + \widehat\phase(x_0,y_0) 
\end{align*}
can be written in the form 
\begin{align*}
R_{x_0,y_0}(x,y)  = (x - x_0) ^\top \widehat G(x,y) (y-y_0), 
\end{align*}
where the entries $\widehat G_{ij}$, $i$, $j=1,\dots,d$, of the matrix $\widehat G$ are analytic on 
$\Omega$. Furthermore, for any convex $K \subset \Omega$ with $(x_0,y_0) \in K$ and 
$d_K:= \sup \{\varepsilon>0\,|\, B_\varepsilon(x) \times B_\varepsilon(y) \subset \Omega
\quad \text{ for all } (x,y) \in K \}>0$ there holds 
\begin{align*}
|\widehat G_{ij}(x,y)| \leq \frac{1}{d_K^2}
\|\widehat\phase\|_{L^\infty(\Omega)} \qquad \text{ for all } (x,y) \in K.  
\end{align*}
\end{lemma}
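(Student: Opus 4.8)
The plan is to derive the representation from the fundamental theorem of calculus applied twice (once in $x$, once in $y$) and then to read off the pointwise bound from Cauchy's estimates for the mixed second derivatives of $\widehat\phase$. Introduce the single function $F(s,r):=\widehat\phase\bigl(x_0+s(x-x_0),\,y_0+r(y-y_0)\bigr)$, which is well defined for $(s,r)$ in a neighbourhood of $[0,1]^2$ because the relevant segments stay in $\Omega$ — in the situation of interest, where $\Omega$ is a product of convex sets (e.g.\ a polydisc), this is immediate. Since $\widehat\phase(x,y)=F(1,1)$, $\widehat\phase(x,y_0)=F(1,0)$, $\widehat\phase(x_0,y)=F(0,1)$ and $\widehat\phase(x_0,y_0)=F(0,0)$, the quantity $R_{x_0,y_0}(x,y)$ is the second mixed difference $F(1,1)-F(1,0)-F(0,1)+F(0,0)=\int_0^1\int_0^1\partial_s\partial_r F(s,r)\,dr\,ds$. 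The chain rule gives $\partial_s\partial_r F(s,r)=\sum_{i,j=1}^d(x_i-x_{0,i})(y_j-y_{0,j})\,(\partial_{x_i}\partial_{y_j}\widehat\phase)(x_0+s(x-x_0),y_0+r(y-y_0))$, so that
\begin{equation*}
\widehat G_{ij}(x,y):=\int_0^1\int_0^1(\partial_{x_i}\partial_{y_j}\widehat\phase)\bigl(x_0+s(x-x_0),\,y_0+r(y-y_0)\bigr)\,dr\,ds
\end{equation*}
yields $R_{x_0,y_0}(x,y)=(x-x_0)^\top\widehat G(x,y)(y-y_0)$. Each $\widehat G_{ij}$ is analytic: for fixed $(s,r)$ the integrand is the composition of the analytic function $\partial_{x_i}\partial_{y_j}\widehat\phase$ with an entire affine map, and integration over the compact square $[0,1]^2$ preserves analyticity (Morera plus Fubini, or differentiation under the integral).

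For the bound, fix $(x,y)\in K$ and $(s,r)\in[0,1]^2$ and put $z:=x_0+s(x-x_0)$, $w:=y_0+r(y-y_0)$. The point is that the two diagonal points $(x_0+s(x-x_0),y_0+s(y-y_0))$ and $(x_0+r(x-x_0),y_0+r(y-y_0))$ both lie in $K$, which (in the product setting) gives $B_{d_K}(z)\subset\Omega_X$ and $B_{d_K}(w)\subset\Omega_Y$, hence $B_{d_K}(z)\times B_{d_K}(w)\subset\Omega$. Freezing all coordinates except $x_i$ and $y_j$ and applying the one-variable Cauchy integral formula on circles of radius $\rho<d_K$ about $z_i$ and $w_j$ gives $|(\partial_{x_i}\partial_{y_j}\widehat\phase)(z,w)|\le\rho^{-2}\|\widehat\phase\|_{L^\infty(\Omega)}$; letting $\rho\uparrow d_K$ and integrating over $(s,r)\in[0,1]^2$ (a set of measure one) yields $|\widehat G_{ij}(x,y)|\le d_K^{-2}\|\widehat\phase\|_{L^\infty(\Omega)}$.

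The chain-rule computation and the Cauchy estimate are routine; the step I expect to require the most care — and the point to get right in order to make the statement precise for a \emph{general} open $\Omega$ — is the domain geometry. For the iterated fundamental theorem of calculus to be legitimate one needs the segments joining $(x_0,y_0)$ to $(x,y)$ through the ``corners'' $(x_0,y)$ and $(x,y_0)$ to lie in $\Omega$, and for the bound one needs a full polydisc $B_{d_K}(z)\times B_{d_K}(w)$ inside $\Omega$ even when $(z,w)$ is off the diagonal (that is, when $s\neq r$). Both hold automatically when $\Omega$ (and $K$) are products of convex sets — the setting relevant to the applications — and it is precisely this product structure that decouples the two parameters $s$ and $r$ in the final estimate.
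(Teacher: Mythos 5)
Your argument is, at its core, the same as the paper's: write $R_{x_0,y_0}(x,y)$ as the second mixed difference of $F(s,r)=\widehat\phase(x_0+s(x-x_0),y_0+r(y-y_0))$, express it as a double integral over $[0,1]^2$ of the mixed partials $\partial_{x_i}\partial_{y_j}\widehat\phase$, read off $\widehat G_{ij}$ from the chain rule, and estimate via the Cauchy integral formula. The domain--geometry caveat you flag explicitly --- that the integral formula needs the whole rectangle $\{(x_0+s(x-x_0),\,y_0+t(y-y_0)):s,t\in[0,1]\}$ inside $\Omega$, and that the Cauchy estimate needs a polydisc around each such (generally off-diagonal) point $(x_s,y_t)$ --- is a genuine subtlety that the paper's proof passes over silently, and your observation that it is automatic when $\Omega$ and $K$ are products of convex sets (which is what occurs in the applications) is correct and worth keeping.

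There is, however, a real gap relative to the stated lemma. The conclusion asserts that each $\widehat G_{ij}$ is analytic on \emph{all of} $\Omega$, whereas your integral formula only defines $\widehat G_{ij}$ --- and only shows its analyticity --- on the (possibly much smaller) subset of $\Omega$ where that rectangle stays inside $\Omega$. The paper closes this gap in a separate step: it defines $G_{ij}$ on the rest of $\Omega$ by an explicit algebraic formula (dividing $R_{x_0,y_0}$ by coordinates of $x-x_0$ and $y-y_0$), observes that the resulting function is analytic in each variable separately, and invokes Hartogs' theorem to upgrade separate analyticity to joint analyticity on $\Omega$. You need to supply an extension argument of this kind; as written, your proof establishes the representation and the bound on $K$ (in the product setting), but not the asserted global analyticity of $\widehat G$ on $\Omega$.
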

\begin{proof}
Let us first show the estimate for the convex $K$. 
Let $(x,y)\in K$.
We denote the parametrizations of straight lines from $x_0$ to $x$
and from $y_0$ to $y$ by $x_s := x_0 + s (x-x_0)$ and
$y_t := y_0 + t (y-y_0)$ for $s,t\in[0,1]$.
By integrating along the second line for fixed $x$ and fixed $x_0$ and
along the first line for fixed $y_t$, we arrive at
\begin{align*}
  R_{x_0,y_0}(x,y)
  &= \int_{t=0}^1 \sum_{i=1}^d \left[ \partial_{y_i} \widehat\phase(x, y_t) 
                    - \partial_{y_i}  \widehat\phase(x_0,y_t)\right]
            (y-y_0)_i \,dt\\
  &= \int_{s=0}^1 \int_{t=0}^1 \sum_{j=1}^d \sum_{i=1}^d
         \partial_{x_j} \partial_{y_i} 
         \widehat\phase(x_s, y_t) (x-x_0)_j (y-y_0)_i \,dt \,ds; 
\end{align*}
here, we abbreviated, for example, $(y -y_0)_i$ for the $i$-th component of the vector $(y - y_0)$. 
Using \cite[Cor.~{2.2.5}]{hoermander90} one can show that the double integral indeed represents an
analytic function $K \ni (x,y) \mapsto \widehat{G}(x,y)$. In order to bound $\widehat G$, one observes that the 
integrand involves the partial derivatives of $\widehat\phase$ with respect to the 
variables $x_j$, $y_i$, $i$, $j=1,\ldots,d$, which can be estimated in view of the Cauchy integral formula 
(cf., e.g.,  \cite[Thm.~{2.2.1}]{hoermander90}). 

To see that the coefficients $G_{ij}$ are analytic on $\Omega$ (and not just on $K$), we note that 
we have to define $G_{ij}$ on $\Omega\setminus \operatorname*{closure} K$ as $G_{ij}(x,y) = R_{x_0,y_0}(x,y)/( (x - x_0)_i (y - y_0)_i)$. 
Now that $G_{ij}$ is defined on $\Omega$, we observe that by the analyticity of $R_{x_0,y_0}$, is it analytic
in each variable separately and thus, by Hartogs' theorem (see, e.g., \cite[Thm.~{2.2.8}]{hoermander90}) 
analytic on $\Omega$. 
\end{proof}

\subsection{Butterfly representation: the heart of the matter}
\label{sec:heart-of-the-matter}

Typically, the key ingredient of fast summation schemes is the
approximation of the kernel function by short sums of products of functions
depending only on $x$ or $y$.
Following \cite[Sec.~{4}]{CADEYI09}  we achieve this by applying 
a suitable modification to the kernel function $k$ and then interpolating the remaining smooth
term on a domain $B^X_0\times B^Y_0$, where $B^X_0$ and $B^Y_0$ are
two boxes and $x_0 \in B^X_0$, $y_0 \in B^Y_0$.

According to Lemma~\ref{lemma:G}, we can expect
\begin{equation*}
  R_{x_0,y_0}(x,y) = \phase(x,y) - \phase(x,y_0)
                   - \phase(x_0,y) + \phase(x_0,y_0)
\end{equation*}
to be ``small'' if the product $\|x-x_0\|\,\|y-y_0\|$ is small.
For fixed $x_0$ and $y_0$ we obtain
\begin{align*}
  \exp(\bi \kappa \phase(x,y))
  &= \exp(\bi \kappa \phase(x,y_0))
     \exp(\bi \kappa \phase(x_0,y))
     \exp(-\bi \kappa \phase(x_0,y_0))
     \exp(\bi \kappa R_{x_0,y_0}(x,y))\\
  &= \exp(\bi \kappa \phase(x,y_0))
     \exp(\bi \kappa \phase(x_0,y))
     \exp(\bi \kappa (R_{x_0,y_0}(x,y) - \phase(x_0,y_0))),
\end{align*}
and we observe that the first term on the right-hand side depends only on $x$,
the second one only on $y$, while the third one is smooth if the product
$\kappa \|x-x_0\|\,\|y-y_0\|$ is small, since the modified phase
function $R_{x_0,y_0}(x,y)$ takes only small values and $\phase(x_0,y_0)$
is constant.
This observation allows us to split the kernel function $k$ into
oscillatory factors depending only on $x$ and $y$, respectively, and
a smooth factor $k_{x_0,y_0}$ that can be interpolated:
\begin{equation*}
  k(x,y) = \exp(\bi \kappa \phase(x,y_0))
           \exp(\bi \kappa \phase(x_0,y))
           \underbrace{k(x,y)
                       \exp(-\bi \kappa (\phase(x,y_0) + \phase(x_0,y)))
                      }_{=:k_{x_0,y_0}(x,y)}.
\end{equation*}
Applying the polynomial interpolation operator to $k_{x_0,y_0}$ yields
\begin{align*}
k(x,y) \approx \exp(\bi \kappa \phase(x,y_0)) \exp(\bi \kappa \phase(x_0,y)) 
I^{B^X_0 \times B^Y_0}_m[k_{x_0,y_0}](x,y). 
\end{align*}
Written with the $M = (m+1)^d$ interpolation points $(\xi_p^{B^X_0})_{p=1}^M\subset B^X_0$ and 
$(\xi_q^{B^Y_0})_{q=1}^M \subset B^Y_0$ and the corresponding Lagrange polynomials
$L_{p,B^X_0}$, $L_{q,B^Y_0}$, we have 
\begin{align}
\nonumber 
k(x,y) & \approx 
\sum_{p,q=1}^M \exp(\bi \kappa \phase(x,y_0)) L_{p,B^X_0}(x)
              \exp(\bi \kappa \phase(x_0,y)) L_{q,B^Y_0}(y)\  
              k_{x_0,y_0}(\xi_p^{B^X_0},\xi_q^{B^Y_0})\\
\label{eq:approximation-of-k}
& = 
\sum_{p,q=1}^M L^x_{p,B^X_0,y_0}(x) L^y_{q,B^Y_0,x_0}(y) \ 
 k_{x_0,y_0}(\xi_p^{B^X_0},\xi_q^{B^Y_0}),
\end{align}
where the \emph{expansion functions} $L^x_{p,B^X_0,y_0}$ and $L^y_{q,B^Y_0,x_0}$ are defined by 
\begin{align}
\label{eq:widetilde-L}
L^x_{p,B^X_0,y_0}  &= 
\exp(\bi \kappa \phase(\punkt,y_0)) L_{p,B^X_0}, 
& L^y_{q,B^Y_0,x_0}  &= 
\exp(\bi \kappa \phase(x_0,\punkt)) L_{q,B^Y_0}. 
\end{align}
A short form of the approximation is given by 
\begin{equation}
\label{eq:middle-level-approximation-analysis}
\left({\mathfrak I}^{B^X_0,x}_{y_0}  \otimes {\mathfrak I}^{B^Y_0,y}_{x_0} \right)[k],
\end{equation}
where, for a box $B\subset \BbbR^d$, a point $z \in \BbbR^d$, and a polynomial degree $m$, 
we have introduced the operators
\begin{align*}
{\mathfrak I}^{B,x}_{z}[f] &:= 
\exp(\bi \kappa \phase(\punkt,z))
   I^{B}_{m} [ \exp(- \bi \kappa \phase(\punkt,z)) f ], 
\qquad \qquad 
{\mathfrak I}^{B,y}_{z}[f] &:= 
\exp(\bi \kappa \phase(z,\punkt))
   I^{B}_{m} [ \exp(- \bi \kappa \phase(z,\punkt)) f ].
\end{align*}
We have seen that the product $\kappa\|x-x_0\|\,\|y-y_0\|$ controls the
smoothness of $k_{x_0,y_0}$, so we can move $y$ away from $y_0$ as
long as we move $x$ closer to $x_0$ without changing the quality
of the approximation.
This observation gives rise to a \emph{multilevel} approximation 
of (\ref{eq:approximation-of-k}) that applies
an additional re-interpolation step to the expansion functions $L^x_{p,B^X_0,y_0}$
and $L^y_{q,B^Y_0,x_0}$.
We only describe the process for $L^x_{p,B^X_0,y_0}$, since 
$L^y_{q,B^Y_0,x_0}$ is treated analogously. 
Let $(B^X_\ell)_{\ell=0}^L$ be a nested sequence of boxes 
and $(y_{-\ell})_{\ell=0}^{L}$ be a sequence of points. The first step of the iterated re-interpolation process 
is given by 
\begin{align}
\label{eq:reinterpolation-one-step}
L^x_{p,B^X_0,y_0}|_{B^X_1} &\approx \exp(\bi \kappa \phase(\punkt,y_{-1}) 
I^{B^X_1}_m \left[ L^x_{p,B^X_0,y_0} \exp(-\bi \kappa \phase(\punkt,y_{-1}))\right]
= {\mathfrak I}^{B^X_1,x}_{y_{-1}}[L^x_{p,B^X_0,y_0}].
\end{align}
The actual approximation of $L^x_{p,B^X_0,y_0}$ is then given by iteratively applying
the operators ${\mathfrak I}^{B^X_\ell,x}_{y_{-\ell}}$, $\ell=0,\ldots,L$, i.e., 
\begin{align}\label{eq:lagrange_reinterpolation}
L^x_{p,B^X_0,y_0}|_{B_L^X}  &\approx   {\mathfrak I}^{B^X_L,x}_{y_{-L}} \circ \cdots 
\circ {\mathfrak I}^{B_1^X,x}_{y_{-1}} 
[L^x_{p,B^X_0,y_0}]
\end{align}
This process is formalized in the following algorithm: 

%
%
\begin{algorithm}[Butterfly representation by interpolation]
\label{alg:butterfly-analysis}
Let two sequences 
$(B_\ell^X)_{\ell=0}^L$, 
$(B_\ell^Y)_{\ell=0}^L$, of nested boxes and sequences of points 
$(y_{-\ell})_{\ell=0}^{L}$, $(x_{-\ell})_{\ell=0}^{L}$ be given.
The \emph{butterfly representation of $k$ on $B^X_L \times B^Y_L$} is
defined by 
\begin{equation*}
k^{BF}:=  
\left( {\mathfrak I}^{B_L^X,x}_{y_{-L}} \circ \cdots
       \circ {\mathfrak I}^{B_0^X,x}_{y_0} \right) \otimes 
\left( {\mathfrak I}^{B_L^Y,y}_{x_{-L}} \circ \cdots
       \circ {\mathfrak I}^{B_0^Y,y}_{x_0} \right) [k].
\end{equation*}
\end{algorithm}
In Theorem~\ref{thm:main} below we will quantify the error 
$k|_{B^X_L \times B^Y_L} - k^{BF}$. 

%
%
\begin{remark}
In Algorithm~\ref{alg:butterfly-analysis}, the number of levels $L$ is chosen to be the 
same for the first argument ``$x$'' and the second argument ``$y$''.
The above developments
show that this is not essential, and Algorithm~\ref{alg:butterfly-analysis} naturally generalizes
to a setting with boxes $(B_\ell^X)_{\ell=0}^{L^X}$, $(B_\ell^Y)_{\ell=0}^{L^Y}$
and corresponding point sequences $(y_{-\ell})_{\ell=0}^{L^X}$, $(x_{-\ell})_{\ell=0}^{L^Y}$ 
with $L^X \ne L^Y$. 
\eremk
\end{remark}

\subsection{Butterfly structures on the matrix level}
\label{sec:butterfly-on-matrix-level} 

It is instructive to formulate how the approximation described in Algorithm~\ref{alg:butterfly-analysis} 
is realized on the matrix level. To that end, we consider the Galerkin
discretization of an integral operator $K: L^2(\Gamma) \rightarrow L^2(\Gamma^\prime)$ defined by 
$(K \varphi)(x) := \int_{y \in \Gamma} k(x,y) \varphi(y)$.  Let 
$(\varphi_i)_{i \in \Idx} \subset L^2(\Gamma^\prime)$, $(\psi_j)_{j \in \Idy} \subset L^2(\Gamma)$ 
be bases of trial and test spaces. We have to represent the Galerkin matrix ${\mathbf K}$ with entries 
\begin{equation}
\label{eq:stiffness-matrix}
{\mathbf K}_{i ,j } = \int_{x \in \Gamma^\prime}\int_{y \in \Gamma} k(x,y)
   \varphi_i(x)\psi_j(y) \,dy\,dx,
\qquad i \in \Idx, \quad j \in \Idy.
\end{equation}
Following the standard approach for fast multipole \cite{RO85} and
panel-clustering methods \cite{HANO89}, the index sets $\Idx$ and $\Idy$
are assumed to be organized in \emph{cluster trees} ${\mathcal T}_\Idx$
and ${\mathcal T}_\Idy$, where the nodes of the tree are called \emph{clusters}.
We assume that the maximal number of sons of a cluster is fixed.
A cluster tree ${\mathcal T}_\Idx$ can be split into levels; the root, which
is the complete index set $\Idx$, is at level $0$. We employ the notation 
${\mathcal T}^\ell_\Idx$ and ${\mathcal T}^\ell_\Idy$ for the clusters on level $\ell$. 
We use the notation $\sons(\sigma)$ for the collection of sons of a cluster $\sigma$ (for leaves $\sigma$, 
we set $\sons(\sigma) = \emptyset$). We let  $\father(\sigma)$ denote the (unique) father of a cluster $\sigma$ 
that is not the root. We use $\descendants(\sigma)$ for the set of descendants of the cluster $\sigma$ 
(including $\sigma$ itself).

A tuple $(\sigma_0,\sigma_1,\ldots,\sigma_n)$ of clusters is called a
\emph{cluster sequence} if
\begin{equation*}
  \sigma_{\ell+1}\in\sons(\sigma_\ell)
  \qquad\text{ for all } \ell\in\{0,\ldots,n-1\}.
\end{equation*}
We introduce for each cluster $\sigma \in {\mathcal T}_\Idx$
a \emph{bounding box} $B_\sigma$, which is an axis-parallel box such that 
\begin{equation}
\label{eq:bounding-box}
\operatorname*{supp} \varphi_i \subset B_\sigma \qquad \text{ for all } i \in \sigma,
\end{equation}
and similarly for clusters $\tau\in{\mathcal T}_\Idy$ and basis
functions $\psi_j$.

\subsubsection{Butterfly structure in a model situation}
\label{sec:butterfly-model}

We illustrate the butterfly structure (based on interpolation as
proposed in \cite[Sec.~4]{CADEYI09}) for a model situation, where 
the leaves of the cluster trees ${\mathcal T}_\Idx$ and ${\mathcal T}_\Idy$
are all on the same level.
In particular, this assumption implies
$\depth({\mathcal T}_\Idx) = \depth({\mathcal T}_\Idy)$.

We fix points $x_\sigma\in B_\sigma$ and $y_\tau\in B_\tau$ for all
$\sigma\in\mathcal{T}_\Idx$ and $\tau\in\mathcal{T}_\Idy$.

For a given pair $(\sigma,\tau) \in\mathcal{T}_\Idx \times \mathcal{T}_\Idy$,
combining the intermediate decomposition (\ref{eq:approximation-of-k})
with (\ref{eq:stiffness-matrix}) yields
\begin{align*}
  \mathbf{K}_{i,j}
  &\approx \int_{\Gamma'} \int_\Gamma \sum_{p=1}^M \sum_{q=1}^M
          L^x_{p,B_\sigma,y_\tau}(x)
          k_{x_\sigma,y_\tau}(\xi_p^{B_\sigma},\xi_q^{B_\tau})
          L^y_{q,B_\tau,x_\sigma}(y)
          \varphi_i(x) \psi_j(y) \,dy\,dx\\
  &= \sum_{p=1}^M \sum_{q=1}^M
     \underbrace{
       \int_{\Gamma'} L^x_{p,B_\sigma,y_\tau}(x) \varphi_i(x) \,dx
     }_{=:{\mathbf V}^{\sigma,\tau}_{i,p}}
     \underbrace{
       k_{x_\sigma,y_\tau}(\xi_p^{B_\sigma},\xi_q^{B_\tau})
     }_{=:{\mathbf S}^{\sigma\times\tau}_{p,q}}
     \underbrace{
       \int_\Gamma L^y_{q,B_\tau,x_\sigma}(y) \psi_j(y) \,dy
     }_{=:{\mathbf W}^{\tau,\sigma}_{j,q}}\\
  &= (\mathbf{V}^{\sigma,\tau} \mathbf{S}^{\sigma\times\tau}
      (\mathbf{W}^{\tau,\sigma})^\top)_{i,j}
   \qquad\text{ for all } i\in\sigma,\ j\in\tau.
\end{align*}
If $\sigma$ is not a leaf of $\mathcal{T}_\Idx$, we employ the additional
approximation (\ref{eq:lagrange_reinterpolation}).
We choose a \emph{middle level} $L\in\BbbN_0$ and cluster sequences
$(\sigma_{-L},\ldots,\sigma_0,\ldots,\sigma_L)$ and
$(\tau_{-L},\ldots,\tau_0,\ldots,\tau_L)$ of clusters
such that $\sigma_0=\sigma$, $\tau_0=\tau$.
Since each cluster has at most one father, the clusters
$\sigma_{-L},\ldots,\sigma_0$ are uniquely determined by $\sigma=\sigma_0$
and $\tau_{-L},\ldots,\tau_0$ are uniquely determined by $\tau=\tau_0$.
The approximation (\ref{eq:lagrange_reinterpolation}) implies for $i \in \sigma_L$ 
\begin{equation*}
  {\mathbf V}^{\sigma,\tau}_{i,p}
  \approx \int_{\Gamma'} \mathfrak{I}^{B_{\sigma_L},x}_{y_{\tau_{-L}}}
      \circ \cdots \circ
      \mathfrak{I}^{B_{\sigma_1},x}_{y_{\tau_{-1}}}
      [L^x_{p,B_{\sigma_0},y_{\tau_0}}](x)
      \varphi_i(x) \,dx, 
   \qquad i\in\sigma_L,\ p\in\{1,\ldots,M\}.
\end{equation*}
The first interpolation step is given by
\begin{align}
\nonumber 
   \mathfrak{I}^{B_{\sigma_1},x}_{y_{\tau_{-1}}}
  [L_{p,B_{\sigma_0},y_{\tau_0}}]  
  &= \exp(\bi \kappa \Phi(x,y_{\tau_{-1}}))
       \sum_{n=1}^M \exp(\bi \kappa (\Phi(\xi^{B_{\sigma_1}}_n,y_{\tau_0})
                        - \Phi(\xi^{B_{\sigma_1}}_n,y_{\tau_{-1}})))
   L_{p,B_{\sigma_0}}(\xi^{B_{\sigma_1}}_n)
                L_{n,B_{\sigma_1}} \\
\label{eq:re-expansion-foo}
  &= \sum_{n=1}^M {\mathbf E}^{\sigma_1,\sigma_0,\tau_0,\tau_{-1}}_{n,i}
                L_{n,B_{\sigma_1},y_{\tau_{-1}}}
    \qquad\text{ for all } p\in\{1,\ldots,M\},
\end{align}
where the \emph{transfer matrix}
$\mathbf{E}^{\sigma_1,\sigma_0,\tau_0,\tau_{-1}}$
is given by
\begin{equation*}
  {\mathbf E}^{\sigma_1,\sigma_0,\tau_{-1},\tau_0}_{n,i}
  = \exp(\bi \kappa (\Phi(\xi^{B_{\sigma_1}}_n,y_{\tau_0})
                     - \Phi(\xi^{B_{\sigma_1}}_n,y_{\tau_{-1}})))
    L_{p,B_{\sigma_0}}(\xi^{B_{\sigma_1}}_n)
\qquad \mbox{for all $n$, $i\in\{1,\ldots,M\}$.}
\end{equation*}
The re-expansion (\ref{eq:re-expansion-foo}) implies 
\begin{equation*}
  {\mathbf V}^{\sigma_\ell,\tau_{-\ell}}_{i,p}
  = \sum_{n=1}^M {\mathbf V}^{\sigma_{\ell+1},\tau_{-\ell-1}}_{i,n}
    {\mathbf E}^{\sigma_{\ell+1},\sigma_\ell,\tau_{-\ell-1},\tau_{-\ell}}_{n,p}
  \qquad\text{ for all } i\in\sigma_{\ell+1},\ p\in\{1,\ldots,M\},
\end{equation*}
so we can avoid storing $\mathbf{V}^{\sigma_\ell,\tau_{-\ell}}$ by storing
the $M\times M$-matrix
$\mathbf{E}^{\sigma_{\ell+1},\sigma_\ell,\tau_{-\ell},\tau_{-\ell-1}}$
and $\mathbf{V}^{\sigma_{\ell+1},\tau_{-\ell-1}}$ instead.
A straightforward induction yields that we only have to store
$\mathbf{V}^{\sigma_L,\tau_{-L}}$ and the transfer matrices.

%
%
\begin{algorithm}[Butterfly representation of matrices]
Let $L = \lfloor \depth(\mathcal{T}_\Idx)/2 \rfloor$ and
$L^\text{middle} = \depth(\mathcal{T}_\Idx)-L
  = \lceil \depth(\mathcal{T}_\Idx)/2 \rceil \geq L$.
\begin{enumerate} 
\item Compute,  for all $\sigma\in\mathcal{T}_\Idx^{L^\text{middle}}$, $\tau \in\mathcal{T}_\Idy^{L^\text{middle}}$,
the \emph{coupling matrices}
  \begin{equation*}
    {\mathbf S}^{\sigma \times \tau}_{p,q}
    = k_{x_\sigma,y_\tau}(\xi_p^{B_\sigma},\xi_q^{B_\tau}), \qquad 
  p,q\in\{1,\ldots,M\}.
  \end{equation*}
\item Compute the \emph{transfer matrices}
  \begin{equation*}
    {\mathbf E}^{\sigma_{\ell+1},\sigma_\ell,\tau_{-\ell-1},\tau_{-\ell}}_{n,p}
    = \exp(\bi \kappa (\Phi(\xi_n^{B_{\sigma_{\ell+1}}},y_{\tau_{-\ell}})
                - \Phi(\xi_n^{B_{\sigma_{\ell+1}}},y_{\tau_{-\ell-1}})))
      L_{p,B_{\sigma_\ell}}(\xi_n^{B_{\sigma_{\ell+1}}}), 
\qquad \mbox{$n,p\in\{1,\ldots,M\}$}, 
  \end{equation*}
  for all $\ell\in\{0,\ldots,L-1\}$,
  $\sigma_\ell\in\mathcal{T}_\Idx^{L^\text{middle}+\ell}$,
  $\sigma_{\ell+1}\in\sons(\sigma_\ell)$,
  $\tau_{-\ell-1}\in\mathcal{T}_\Idy^{L^\text{middle}-\ell-1}$,
  $\tau_{-\ell}\in\sons(\tau_{-\ell-1})$,
and
  \begin{equation*}
    {\mathbf E}^{\tau_{\ell+1},\tau_\ell,\sigma_{-\ell-1},\sigma_{-\ell}}_{n,q}
    = \exp(\bi \kappa (\Phi(x_{\sigma_{-\ell}},\xi_n^{B_{\tau_{\ell+1}}})
                - \Phi(x_{\sigma_{-\ell-1}},\xi_n^{B_{\tau_{\ell+1}}})))
      L_{q,B_{\tau_\ell}}(\xi_n^{B_{\tau_{\ell+1}}}), 
  \qquad \mbox{$n,q\in\{1,\ldots,M\}$}
  \end{equation*}
  for all $\ell\in\{0,\ldots,L-1\}$,
  $\sigma_{-\ell-1}\in\mathcal{T}_\Idx^{L^\text{middle}-\ell-1}$,
  $\sigma_{-\ell}\in\sons(\sigma_{-\ell-1})$,
  $\tau_\ell\in\mathcal{T}_\Idy^{L^\text{middle}+\ell}$,
  $\tau_{\ell+1}\in\sons(\tau_\ell)$.
\item Compute the \emph{leaf matrices}
  \begin{equation*}
    {\mathbf V}^{\sigma_L,\tau_{-L}}_{i,p}
    = \int_{\Gamma'}
      \exp(\bi \kappa \Phi(x, y_{\tau_{-L}}))
      L_{p,B_{\sigma_L}}(x)
      \varphi_i(x) \,dx
  \end{equation*}
  for all $\sigma_L\in\mathcal{T}^{L^\text{middle}+L}_\Idx$,
  $\tau_{-L}\in\mathcal{T}^{L^\text{middle}-L}_\Idy$,
  $i\in\sigma_L$, $p\in\{1,\ldots,M\}$, and
  \begin{equation*}
    {\mathbf W}^{\tau_L,\sigma_{-L}}_{j,q}
    = \int_\Gamma
      \exp(\bi \kappa \Phi(x_{\sigma_{-L}}, y))
      L_{q,B_{\tau_L}}(y)
      \psi_j(y) \,dy
  \end{equation*}
  for all $\tau_L\in\mathcal{T}^{L^\text{middle}+L}_\Idy$,
  $\sigma_{-L}\in\mathcal{T}^{L^\text{middle}-L}_\Idx$,
  $j\in\tau_L$, $q\in\{1,\ldots,M\}$.
\item For leaf clusters $\sigma_L\in\mathcal{T}_\Idx$
  and $\tau_L\in\mathcal{T}_\Idy$ there are uniquely determined
  cluster sequences $(\tau_{-L},\ldots,\tau_0,\ldots,\tau_L)$
  and $(\sigma_{-L},\ldots,\sigma_0,\ldots,\sigma_L)$.
  The matrix $\mathbf{K}$ is approximated by
  \begin{align*}
    & \mathbf{K}|_{\sigma_L\times\tau_L} \approx \\
    & \mathbf{V}^{\sigma_L,\tau_{-L}}
             \mathbf{E}^{\sigma_L,\sigma_{L-1},\tau_{-L},\tau_{-L+1}}
             \cdots
             \mathbf{E}^{\sigma_1,\sigma_0,\tau_{-1},\tau_0}
             \mathbf{S}^{\sigma_0\times\tau_0}
(\mathbf{E}^{\tau_1,\tau_0,\sigma_{-1},\sigma_0})^\top
             \cdots
            (\mathbf{E}^{\tau_L,\tau_{L-1},\sigma_{-L},\sigma_{-L+1}})^\top
            (\mathbf{W}^{\tau_L,\sigma_{-L}})^\top.
  \end{align*}
\end{enumerate}
\end{algorithm}

%
%
\begin{remark}
The costs of representing a butterfly matrix are as follows (for even $\operatorname*{depth} {\mathcal T}_\Idx$
and $L^\text{middle} = L = \operatorname*{depth}({\mathcal T}_\Idx/2$):
\begin{enumerate}
\item 
For the coupling matrices ${\mathbf S}^{\sigma\times\tau}$ on the middle level $L^\text{middle}$:
$M^2 |{\mathcal T}^{L^\text{middle}}_\Idx| |{\mathcal T}^{L^\text{middle}}_\Idy|$ 
\item For the transfer matrices $\mathbf{E}^{\sigma_{\ell+1},\sigma_\ell,\tau_{-\ell-1},\tau_\ell}$:
$\sum_{\ell=0}^{L-1} M^2 |{\mathcal T}^{L^\text{middle}+\ell}_\Idx| |{\mathcal T}^{L^\text{middle}-\ell}_\Idy|$ 
\item For the transfer matrices $\mathbf{E}^{\tau_{\ell+1},\tau_\ell,\sigma_{-\ell-1},\sigma_{-\ell}}$:
$\sum_{\ell=0}^{L-1} M^2 |{\mathcal T}^{L^\text{middle}-\ell}_\Idx| |{\mathcal T}^{L^\text{middle}+\ell}_\Idy|$ 
\item For the leaf matrices ${\mathbf V}^{\sigma_L,\tau_{-L}}$ and 
${\mathbf W}^{\tau_L,\sigma_{-L}}$ with leaves $\sigma_L\in\mathcal{T}_\Idx$
and $\tau_L\in\mathcal{T}_\Idy$: 
$\sum_{\sigma \in {\mathcal T}_\Idx} M |\sigma|$ and 
$\sum_{\tau\in {\mathcal T}_\Idy} M |\tau|$.
\end{enumerate}
In a model situation with $|\Idx| = |\Idy| = N$ and balanced binary trees ${\mathcal T}_\Idx$, ${\mathcal T}_\Idy$
of depth $2L = O(\log N)$ and leaves of ${\mathcal T}_\Idx$, ${\mathcal T}_\Idy$ that have at most $n_\mathrm{leaf}$ elements
we get 
\begin{equation*}
  M^2 \sqrt{N} \sqrt{N} + 2 M^2 L N + 2 n_\mathrm{leaf} N
  = O(M^2 \sqrt{N} \sqrt{N} + 2 M^2 N \log N  + 2 n_\mathrm{leaf} N). 
\end{equation*}
We expect for approximation-theoretical reasons that the polynomial 
degree satisfies $m = O(\log N)$. Since $M = (m+1)^d$, the total complexity is then $O(N \log^{2d+1} N)$. 
\eremk 
\end{remark}

%
%
\begin{remark}
The butterfly structure presented here is suitable for kernel functions with analytic phase function 
$\phase$ and amplitude function $\amp$. When these functions are ``asymptotically smooth'', for example, 
when they are functions of the Euclidean distance $(x,y) \mapsto \|x - y\|$, a
modification is necessary to take care of the singularity at $x=y$. 
For example, one could create a block partition that applies the
approximation scheme only to pairs $(\sigma,\tau)$ of clusters that satisfy the
\emph{standard admissibility condition}
$\max\{\diam(B_\sigma),\diam(B_\tau)\}\leq\dist(B_\sigma,B_\tau)$.
Each block $\mathbf{K}|_{\sigma\times\tau}$ that satisfies this condition is
treated as a butterfly matrix in the above sense.
We illustrate this procedure in 
Section~\ref{sec:butterfly-asymptotically-smooth-kernels} below for general asymptotically smooth kernel
functions $k$ and specialize to the 3D Helmholtz kernel in Section~\ref{sec:helmholtz}.
\eremk
\end{remark}

\subsubsection{${\mathcal D}{\mathcal H}^2$-matrices}
\label{sec:Dh2-matrices}

It is worth noting that the above butterfly structure can be interpreted as a special case 
of directional ${\mathcal H}^2$-matrices (short: ${\mathcal D}{\mathcal H}^2$-matrices) as introduced 
in \cite{BEKUVE15,BO15,boerm-melenk15} in the context of discretizations
of Helmholtz integral operators. 

Let us recall the definition of a
${\mathcal D}{\mathcal H}^2$-matrix ${\mathbf K} \in \BbbC^{\Idx\times\Idy}$
with cluster trees ${\mathcal T}_\Idx$, ${\mathcal T}_\Idy$.  

%
%
\begin{definition}[Directional cluster basis for ${\mathcal T}_\Idx$]
\label{definition:cluster-basis}
For each cluster $\sigma \in {\mathcal T}_\Idx$, let ${\mathcal D}_\sigma$
be a given index set. 
Let ${\mathcal V} = ({\mathbf V}^{\sigma,c})_{\sigma \in {\mathcal T}_\Idx, c \in {\mathcal D}_\sigma}$ 
be a two-parameter family of matrices. This family is called a \emph{directional cluster basis} with \emph{rank $M$} 
if
\begin{itemize}
  \item ${\mathbf V}^{\sigma, c}\in\BbbC^{\sigma \times M}$ for all $\sigma \in{\mathcal T}_\Idx$
     and $c\in\mathcal{D}_\sigma$, and
  \item there is, for every $\sigma$ that is not a leaf of ${\mathcal T}_\Idx$ and every
  $\sigma^\prime \in \sons(\sigma)$ and every $c \in {\mathcal D}_\sigma$, an element $c' \in {\mathcal D}_{\sigma'}$  
  and a matrix ${\mathbf E}^{\sigma',\sigma,c',c} \in \BbbC^{M \times M}$ such that 
     \begin{align}\label{eq:nested}
       {\mathbf V}^{\sigma,c}|_{\sigma'\times \{1,\ldots,M\}} &= {\mathbf V}^{\sigma', c'} {\mathbf E}^{\sigma',\sigma,c',c}. 
     \end{align}
\end{itemize}
The matrices ${\mathbf E}^{\sigma',\sigma,c',c}$ are called \emph{transfer matrices} for
the directional cluster basis. 
\end{definition}

${\mathcal D}{\mathcal H}^2$-matrices are blockwise low-rank matrices. To describe the details of this
structure, let ${\mathcal T}_{\Idx \times \Idy}$ be a block tree based on the cluster trees 
${\mathcal T}_\Idx$  and ${\mathcal T}_\Idy$. 
Specifically, we assume that a) the root of ${\mathcal T}_{\Idx \times \Idy}$ is $\Idx \times \Idy$, b) each
node of ${\mathcal T}_{\Idx \times \Idy}$ is of the form 
$(\sigma,\tau) \in {\mathcal T}_\Idx \times {\mathcal T}_\Idy$, and c) for every node 
$(\sigma,\tau) \in {\mathcal T}_{\Idx \times \Idy}$ we have 
\begin{equation*}
\sons((\sigma,\tau)) \ne \emptyset  \quad \Longrightarrow \quad 
\sons((\sigma,\tau))  = \sons(\sigma) \times \sons(\tau). 
\end{equation*}
We denote the \emph{leaves} of the block tree ${\mathcal T}_{\Idx\times\Idy}$ by
\begin{equation*}
  \lfIJ := \{ b\in{\mathcal T}_{\Idx\times\Idy}\ :\ \sons(b)=\emptyset \}.
\end{equation*}
The leaves form a disjoint partition of $\Idx\times\Idy$, so a matrix ${\mathbf G}$ is uniquely determined
by the submatrices ${\mathbf G}|_{\sigma\times\tau}$ for $b=(\sigma,\tau)\in\lfIJ$. The set of leaves 
$\lfIJ$ is written as the disjoint union 
$\lfaIJ\dot\cup\lfiIJ$ of two sets, which are called the 
\emph{admissible} leaves 
$\lfaIJ$, corresponding to submatrices that can be approximated,
and the \emph{inadmissible} leaves $\lfiIJ$, corresponding to small
submatrices that have to be stored explicitly.
We are now in a position to define $\mathcal{DH}^2$-matrices as in 
\cite{BO15,boerm-melenk15}: 

%
%
\begin{definition}[Directional $\mathcal{H}^2$-matrix]
Let ${\mathcal V}$ and ${\mathcal W}$ be directional cluster bases of rank $M$ for ${\mathcal T}_\Idx$ 
and ${\mathcal T}_\Idy$, respectively. 
A matrix ${\mathbf G}\in\BbbC^{\Idx\times\Idy}$ 
is called a \emph{directional $\mathcal{H}^2$-matrix} (or simply: a
\emph{$\mathcal{DH}^2$-matrix}) if there are families
${\mathcal S}=({\mathbf S}_b)_{b\in\lfaIJ}$ and $(c_b^\sigma)_{b\in\lfaIJ}$, $(c_b^\tau)_{b \in \lfaIJ}$ 
such that
\begin{itemize}
  \item ${\mathbf S}_b\in\BbbC^{M\times M}$ 
    for all $b=(\sigma,\tau)\in\lfaIJ$, and
  \item ${\mathbf G}|_{\sigma\times\tau} = 
          {\mathbf V}^{\sigma, c^\sigma_b} {\mathbf S}_b ({\mathbf W}^{\tau,c^\tau_b})^\top$ 
    with $c^\sigma_b\in\mathcal{D}_\sigma$, $c^\tau_b\in\mathcal{D}_\tau$
    for all $b=(\sigma,\tau)\in\lfaIJ$.
\end{itemize}
The elements of the family ${\mathcal S}$ are called \emph{coupling matrices}.
The cluster bases ${\mathcal V}$ and ${\mathcal W}$ are called the \emph{row cluster basis}
and \emph{column cluster basis}, respectively.
A \emph{$\mathcal{DH}^2$-matrix representation} of a
$\mathcal{DH}^2$-matrix ${\mathbf G}$ consists of ${\mathcal V}$, ${\mathcal W}$, ${\mathcal S}$ and the
family $({\mathbf G}|_{\sigma\times\tau})_{b=(\sigma,\tau)\in\lfiII}$ of \emph{nearfield matrices} corresponding
to the inadmissible leaves of ${\mathcal T}_{\Idx\times\Idy}$.
\end{definition}

\subsubsection{The butterfly structure as a special $\mathcal{DH}^2$-matrix}
\label{sec:DH2}

We now show that the butterfly structure discussed in Section~\ref{sec:butterfly-model} 
can be understood as a ${\mathcal D}{\mathcal H}^2$-matrix:
for $L=\lfloor \depth(\mathcal{T}_\Idx)/2 \rfloor$ and the
middle level $L^\text{middle} := \depth(\mathcal{T}_\Idx) - L
= \lceil \depth(\mathcal{T}_\Idx)/2 \rceil$, we let
\begin{equation*}
  \mathcal{L}_{\Idx\times\Idy}^+
  = \{ (\sigma,\tau)\ :\ \sigma\in\mathcal{T}^{L^\text{middle}}_\Idx,
                         \tau\in\mathcal{T}^{L^\text{middle}}_\Idy \},
  \qquad
  \mathcal{L}_{\Idx\times\Idy}^-
  = \emptyset.
\end{equation*}
The key is to observe that the sets ${\mathcal D}_\sigma$ associated with 
a cluster $\sigma \in {\mathcal T}^{L^\text{middle}+\ell}_\Idx$ on level $L^\text{middle}+\ell$
are taken to be points $y_\tau$ with $\tau \in {\mathcal T}^{L^\text{middle}-\ell}_\Idy$: 
\begin{equation}
{\mathcal D}_\sigma := \{y_\tau\,|\, \tau \in {\mathcal T}^{L^\text{middle}-\ell}_\Idy\} \qquad 
\text{ for } \quad \sigma \in {\mathcal T}^{L^\text{middle}+\ell}_\Idx,\ \ell=0,\ldots,L. 
\end{equation}
Analogously, we define
the sets ${\mathcal D}_\tau$ for $\tau \in {\mathcal T}_\Idy$. 
The transfer matrices ${\mathbf E}^{\sigma',\sigma,c',c}$ appearing in
Definition~\ref{definition:cluster-basis} are those of
Section~\ref{sec:butterfly-model}, and the same holds for
the leaf matrices $\mathbf{V}^{\sigma_L,c}$ and $\mathbf{W}^{\tau_L,c}$.

\subsubsection{Butterfly structures for asymptotically smooth kernels in a model situation}
\label{sec:butterfly-asymptotically-smooth-kernels}
The model situation of Section~\ref{sec:butterfly-model} is appropriate when the kernel function $k$ 
is analytic. Often, however, the kernel function $k$ is only ``asymptotically smooth'', i.e., it satisfies 
estimates of the form 
\begin{equation}
\label{eq:asymptotically-smooth} 
|D_x^\alpha D_y^\beta k(x,y)|  \leq C \frac{\alpha! \beta!}{\|x - y\|^{|\alpha| + |\beta| + \delta}} \gamma^{|\alpha|+|\beta|} 
\qquad \text{ for all } \alpha, \beta \in \BbbN_0^d
\end{equation}
for some $C$, $\gamma > 0$, $\delta \in \BbbR$. Prominent examples include kernel function such as the 
3D Helmholtz kernel (\ref{eq:helmholtz-kernel}) where the dependence on $(x,y)$ is through the Euclidean distance $\|x - y\|$.  

We describe the data structure for an approximation of the stiffness matrix ${\mathbf K}$ given by (\ref{eq:stiffness-matrix}) 
for asymptotically smooth kernels. We will study a restricted setting that focuses on the essential points and is geared
towards kernels such as the Helmholtz kernel (\ref{eq:helmholtz-kernel}). 

Again, let the index sets $\Idx$ and $\Idy$  be organized in
trees ${\mathcal T}_\Idx$ and ${\mathcal T}_\Idy$ with a bounded number
  of sons.
We assume that the trees are balanced and that all leaves are on the same level
$\depth({\mathcal T}_\Idx) = \depth({\mathcal T}_\Idy)$. 
Recall the notion of bounding box in (\ref{eq:bounding-box}). 
It will also be convenient to introduce for $\sigma \in {\mathcal T}_\Idx$ and $\tau \in {\mathcal T}_\Idy$ the 
\emph{subtrees} 
$$
{\mathcal T}_\Idx(\sigma) \quad  \mbox{ and } \quad 
{\mathcal T}_\Idy(\tau) 
$$
with roots $\sigma$ and $\tau$, respectively, and the clusters on level $\ell$:
$$
{\mathcal T}^\ell_\Idx(\sigma) := 
{\mathcal T}_\Idx(\sigma) \cap {\mathcal T}^\ell_\Idx, 
\qquad 
{\mathcal T}^\ell_\Idy(\tau) := {\mathcal T}_\Idy(\tau)  \cap {\mathcal T}^\ell_\Idy.
$$
Concerning the block cluster tree ${\mathcal T}_{\Idx \times \Idy}$, we assume that its 
leaves $\lfIJ = \lfaIJ \dot \cup \lfiIJ$ are created as follows: 
\begin{enumerate}
\item 
Apply a clustering algorithm to create a block tree ${\mathcal T}^{standard}_{\Idx \times \Idy}$ based on the 
trees ${\mathcal T}_\Idx$ and ${\mathcal T}_\Idy$ 
according to the \emph{standard admissibility condition}
\begin{equation}
\label{eq:eta_1}
\operatorname*{max} (\operatorname*{diam} B_\sigma, \operatorname*{diam} B_\tau) 
\leq \eta_1 
\operatorname*{dist} (B_\sigma, B_\tau) 
\end{equation}
for a fixed admissibility parameter $\eta_1 > 0$. 
\item 
The leaves of ${\mathcal T}^{standard}_{\Idx \times \Idy}$ are split as 
$\lfsaIJ \dot\cup \lfsiIJ$ 
into \emph{admissible} leaves 
$\lfsaIJ$ and \emph{inadmissible} leaves $\lfsiIJ$. 
\item
Set $\lfiIJ:= \lfsiIJ$.
\item 
For all $(\widehat{\sigma},\widehat{\tau})\in\lfsaIJ$, we define
a set $\lfpa(\sigma,\tau)$ of sub-blocks satisfying a stronger
admissibility condition:
Let $(\widehat{\sigma},\widehat{\tau})\in\lfsaIJ$ and
$\ell:=\level(\widehat{\tau})$.
Our assumptions imply $\ell=\level(\widehat{\sigma})$.

Set $L_\ell := \lfloor (\depth(\mathcal{T}_\Idx) - \ell) / 2 \rfloor$
and $L_{\ell}^\text{middle} := \depth(\mathcal{T}_\Idx) - L_\ell$.

Define the \emph{parabolically admissible leaves} corresponding to $(\widehat \sigma,\widehat \tau)$ by 
\begin{equation*}
\lfpa(\widehat \sigma,\widehat \tau):= {\mathcal T}^{L_{\ell}^\text{middle}}_\Idx(\widehat \sigma) 
\times {\mathcal T}^{L_{\ell}^\text{middle}}_\Idy(\widehat \tau).
\end{equation*}
\item
Define the set of admissible leaves by
\begin{equation*}
\lfaIJ:= \bigcup_{(\widehat \sigma,\widehat \tau) \in \lfsaIJ}  \lfpa(\widehat \sigma,\widehat \tau). 
\end{equation*}
\end{enumerate}

In order to approximate $\mathbf{K}$, we consider each block
$(\widehat{\sigma},\widehat{\tau})\in\mathcal{L}^{standard}_{\Idx\times\Idy}$
individually:
if it is an inadmissible block, we store
$\mathbf{K}|_{\widehat{\sigma}\times\widehat{\tau}}$ directly.
If it is an admissible block, we apply the butterfly representation
described in the previous section to the
sub-clustertrees $\mathcal{T}_\Idx(\sigma)$ and
$\mathcal{T}_\Idy(\tau)$.
This is equivalent to approximating
$\mathbf{K}|_{\widehat{\sigma}\times\widehat{\tau}}$ by a \emph{local}
$\mathcal{DH}^2$-matrix.

%
%
\begin{remark}
In Section~\ref{sec:DH2} we argued that a matrix with a butterfly structure
can be understood as a ${\mathcal D}{\mathcal H}^2$-matrix. The situation is different 
here, where only submatrices are endowed with a butterfly structure. While 
the submatrices are ${\mathcal D}{\mathcal H}^2$-matrices, the \emph{global} matrix is not a 
${\mathcal D}{\mathcal H}^2$-matrix. 
To see this, let $p:=\depth(\mathcal{T}_\Idx)$.
If we start with an admissible block $\sigma\times\tau$ (with respect to the standard admissibility
condition) on
level $\ell$, choose a middle level
\begin{equation*}
  L_\ell^\text{middle} = p - L_\ell
  = p - \lfloor (p-\ell)/2 \rfloor
  = \ell + (p-\ell) - \lfloor (p-\ell)/2 \rfloor
  = \ell + \lceil (p-\ell)/2 \rceil, 
\end{equation*}
and re-interpolate $p-\ell^\text{middle}$ times until we reach leaf
clusters $\sigma_p,\tau_p$, we will use points
$x_{\sigma_{\hat\ell}}$ and $y_{\tau_{\hat\ell}}$ on level
\begin{align*}
  \hat\ell
  &:= L_\ell^\text{middle} - (p - \ell^\text{middle})
   = 2 L_\ell^\text{middle} - p
   = 2 (\ell + \lceil (p-\ell)/2 \rceil) - p
\\ &
= 2 \ell + 2 \lceil (p-\ell)/2 \rceil - p
   = \begin{cases}
       \ell &\text{ if } p-\ell \text{ is even},\\
       \ell+1 &\text{ otherwise}
     \end{cases}
\end{align*}
for the approximation, i.e., the point sets $\mathcal{D}_\sigma$ and
$\mathcal{D}_\tau$ depend on the level $\ell$ of the admissible block.
For a $\mathcal{DH}^2$-matrix, these sets are only allowed to depend
on $\sigma$ and $\tau$, but not on the level $\ell$ of the admissible
block.
\eremk
\end{remark}

The error analysis of the resulting matrix approximation will require some assumptions. The following assumptions
will be useful in Section~\ref{sec:helmholtz} for the analysis of the
Helmholtz kernel (\ref{eq:helmholtz-kernel}). 

%
%
\begin{assumption}
\label{assumption:model-situation}
\begin{enumerate}
\item Blocks $(\widehat \sigma,\widehat\tau) \in \lfsaIJ$ satisfy the admissibility condition (\ref{eq:eta_1}). 
\item For blocks $(\widehat \sigma,\widehat\tau) \in \lfsaIJ$ there holds for all $\widehat{\ell} \in \{L_\ell^\text{middle}-L_\ell,\ldots,L_\ell^\text{middle}+L_\ell\}$
\begin{equation}
\label{eq:eta_2}
\kappa \operatorname*{diam} B_{\sigma'} 
\operatorname*{diam} B_{\tau'} \leq \eta_2 \operatorname*{dist}(B_{\widehat \sigma}, B_{\widehat \tau})
\quad \text{ for all } \sigma' \in {\mathcal T}^{L_\ell^{\widehat{\ell}}}_\Idx(\widehat \sigma),\ \tau' \in {\mathcal T}^{L_\ell^{\widehat{\ell}}}_\Idy(\widehat \tau)
\end{equation}
for some fixed parameter $\eta_2$. 
\item 
(shrinking condition) 
There is a constant $\overline{q}\in(0,1)$ such that
for all blocks $(\widehat{\sigma},\widehat{\tau}) \in \lfsaIJ$ there
holds for all $\sigma \in {\mathcal T}_\Idx(\widehat{\sigma})$,
$\sigma' \in \sons(\sigma)$ as well as all
$\tau \in {\mathcal T}_\Idy(\widehat{\tau})$, $\tau' \in \sons(\tau)$ 
\begin{equation}
\label{eq:q} 
\diam_i B_{\sigma'} \leq 
\overline{q} \diam_i B_{\sigma},
\quad 
\diam_i B_{\tau'} \leq 
\overline{q} \diam_i B_{\tau} \quad\text{ for all } i=1,\ldots,d.  
\end{equation}
\end{enumerate}
\end{assumption}

\section{Analysis}
\label{sec:analysis}

A key point of the error analysis is the understanding of the re-interpolation step, which 
hinges on the following question: Given, on an interval $[-1,1]$, an analytic function 
that is the product of an analytic function and a polynomial, how well can 
we approximate it by polynomials on a subinterval $[a,b] \subset [-1,1]$? In turn, sharp estimates for 
polynomial approximation of analytic functions rely on bounds of the function to be approximated 
on Bernstein's elliptic discs. The following Lemma~\ref{lemma:inclusion}, which is a refinement 
of \cite[Lemma~{5.4}]{boerm-melenk15}, shows that, given $\rho_1>1$, it is possible to ensure 
$\ellipse^{[a,b]}_{\rho_0} \subset \ellipse^{[-1,1]}_{\rho_1}$ in conjunction with 
 $\rho_0 > \rho_1$: 
%
%
\begin{lemma}[Inclusion]
\label{lemma:inclusion}
Fix $\overline{q}  \in (0,1)$ and $\underline{\rho}> 1$. Then there exists $\widehat q \in (0,1)$ such that for 
any $\rho_0 \ge \underline{\rho}$ there exists $\rho_1 \in (1,\widehat q \rho_0]$ such that 
for any interval $[a,b] \subset [-1,1]$ with $(b-a)/2 \leq \overline{q}$ there holds 
\begin{equation}
\label{eq:lemma:inclusion-100}
{\ellipse}^{[a,b]}_{\rho_0} \subset {\ellipse}_{\rho_1}. 
\end{equation}
In fact, the smallest $\rho_1$ satisfying (\ref{eq:lemma:inclusion-100}) is given by 
the solution $\rho_1 > 1$ of the quadratic equation (\ref{eq:lemma:inclusion-1000}).  
\end{lemma}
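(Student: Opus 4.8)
The plan is to use the fact that the Bernstein ellipse is a sublevel set of the convex "focal-sum" function $F(z):=|z-1|+|z+1|$: by definition $\ellipse_{\rho}=\{z\in\BbbC\ :\ F(z)<\rho+1/\rho\}$, and unwinding the scaling/shifting one sees that $\ellipse^{[a,b]}_{\rho_0}=\{z\ :\ |z-a|+|z-b|<h(\rho_0+1/\rho_0)\}$ with $h:=(b-a)/2$. Consequently (\ref{eq:lemma:inclusion-100}) holds if and only if $\rho_1+1/\rho_1\ge \sup_{z\in\ellipse^{[a,b]}_{\rho_0}}F(z)=:M(a,b,\rho_0)$, where the supremum over the open ellipse equals the maximum over its compact closure; moreover since a non-constant convex function attains no interior maximum, $\rho_1+1/\rho_1=M$ already forces the (open) inclusion. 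The smallest admissible $\rho_1>1$ is therefore the larger root of $t^2-Mt+1=0$, which is the quadratic equation (\ref{eq:lemma:inclusion-1000}), so everything reduces to (i) computing $M$ for the worst interval $[a,b]$ and (ii) controlling the size of the resulting $\rho_1$.

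For step (i) I would first reduce to a worst-case interval. Fix $h\le\overline q$ and write $c:=(a+b)/2$, so $\ellipse^{[a,b]}_{\rho_0}=c+h\,\ellipse_{\rho_0}$. For each fixed $w\in\BbbC$ the map $c\mapsto F(w+c)$ is convex in the real variable $c$ (it is a sum of compositions of affine maps with the Euclidean norm), hence $c\mapsto\max_{w\in h\,\overline{\ellipse_{\rho_0}}}F(w+c)$ is convex on the admissible range $c\in[-1+h,1-h]$ and attains its maximum at an endpoint; by the symmetry $F(-z)=F(z)$ both endpoints give the same value, so the worst case is $c=1-h$, i.e.\ $[a,b]=[1-2h,1]$. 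The key simplification is that then $b=1$ is simultaneously a focus of $\ellipse^{[a,b]}_{\rho_0}$ and one of the two base points of $F$: for $z$ in the closed ellipse one has $|z-a|+|z-b|\le h(\rho_0+1/\rho_0)$ and $|z+1|\le|z-a|+|a+1|=|z-a|+(2-2h)$, so $F(z)=|z-b|+|z+1|\le h(\rho_0+1/\rho_0)+2-2h$, with equality at the right vertex $z=(1-h)+h(\rho_0+1/\rho_0)/2$. This bound is increasing in $h$, so the overall worst case is $h=\overline q$ and
\begin{equation*}
M=M(\rho_0)=\overline q\bigl(\rho_0+\rho_0^{-1}-2\bigr)+2.
\end{equation*}

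For step (ii), let $\rho_1=\rho_1(\rho_0)$ be the larger root of $t^2-M(\rho_0)t+1=0$. Since $M(\rho_0)-2=\overline q(\sqrt{\rho_0}-1/\sqrt{\rho_0})^2>0$ for $\rho_0>1$, the discriminant is positive, the two roots are positive, distinct and reciprocal, so $\rho_1>1$. Next, $M(\rho_0)<\rho_0+\rho_0^{-1}$ — which after dividing by $1-\overline q>0$ is just $2<\rho_0+\rho_0^{-1}$ — and since $t\mapsto t+1/t$ is increasing on $(1,\infty)$ this gives $\rho_1(\rho_0)<\rho_0$, hence $q(\rho_0):=\rho_1(\rho_0)/\rho_0\in(0,1)$ for every $\rho_0\ge\underline\rho$. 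Finally, $q$ is continuous on $[\underline\rho,\infty)$ and $\lim_{\rho_0\to\infty}q(\rho_0)=\overline q<1$ (because $M(\rho_0)\sim\overline q\rho_0$, so $\rho_1(\rho_0)=\tfrac12(M+\sqrt{M^2-4})\sim\overline q\rho_0$); thus $q$ extends continuously to the compactification $[\underline\rho,\infty]$, its maximum there is strictly below $1$, and taking $\widehat q$ to be that maximum yields $\rho_1(\rho_0)=q(\rho_0)\rho_0\le\widehat q\,\rho_0$ for all $\rho_0\ge\underline\rho$, with $\widehat q$ depending only on $\overline q$ and $\underline\rho$.

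The main obstacle is the worst-case reduction in the second paragraph: rigorously showing that it suffices to consider the endpoint-touching interval of maximal length and that, for that interval, $F$ is maximized at the real vertex. The convexity-in-translation argument together with the symmetry handles the choice of $c$, and the coincidence of a focus of the extremal ellipse with a base point of $F$ makes the triangle-inequality computation of $M$ exact; without that coincidence one would face the genuinely off-center maximization of $F$ over an ellipse, where (as simple circular examples show) the maximum need not sit at a vertex at all.
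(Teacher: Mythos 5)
Your proof is correct and follows the same overall strategy as the paper's: reduce to an endpoint-touching interval, compute the maximum of the focal-sum function $F(z)=|z-1|+|z+1|$ over the scaled ellipse to obtain $M$, identify $\rho_1$ as the larger root of $t^2-Mt+1=0$, and then combine the $\rho_0\to\infty$ asymptotics with compactness of $[\underline{\rho},\overline{\rho_0}]$ to extract $\widehat q<1$. Both proofs arrive at exactly the same worst-case value $M=2+\overline q(\rho_0+1/\rho_0-2)$ and hence the same quadratic equation. Where you genuinely simplify matters is in step (i): the paper parametrizes the boundary of $\ellipse^{[-1,-1+2h]}_{\rho_0}$ by its real coordinate, writes $F$ restricted to the boundary as $s_1+s_2$ with $s_2$ an exact affine function, and then argues via a case split on the convexity or concavity of $s_1$ (checking a sign of $f'$ at the vertex in the concave case, and evaluating both endpoints in the convex case) that the maximum sits at the outer real vertex. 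Your observation that for the endpoint-touching interval $[1-2h,1]$ the point $1$ is simultaneously a focus of the scaled ellipse and a base point of $F$ collapses this entire calculation: the two-step triangle inequality $|z+1|\leq|z-a|+|a+1|$ combined with the defining focal-sum bound gives the upper estimate directly, and sharpness at the right vertex is immediate because both inequalities are collinear-equality cases there. This eliminates the case distinction and the elementary but fiddly algebra in the paper's appendix, at no loss of rigor; you also handle the dependence on $h$ more cleanly by noting monotonicity in $h$ up front instead of carrying $(\rho_0,h)$ as a two-parameter family through the compactness argument.
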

\begin{proof} 
We remark that \cite[Lemma~{5.4}]{boerm-melenk15} represents
a simplified version of Lemma~\ref{lemma:inclusion} that is suitable for large $\rho_0$; 
in particular, for $\rho_0 \rightarrow \infty$, the 
ratio $\rho_1/\rho_0$ tends to $(b-a)/2$. The proof of the present general case
is relegated to \ref{sec:appendix}.
\end{proof}

In view of our assumption (\ref{eq:q}), we can fix a ``shrinking factor''
$\overline{q}\in(0,1)$ in the following.
We also assume $\underline{\rho}>1$; the parameter $\widehat{q}\in(0,1)$ appearing in the following
results will be as in Lemma~\ref{lemma:inclusion}.

We study one step of re-interpolation in the univariate case: 

%
%
\begin{lemma}
\label{lemma:univariate-case}
Let $J_0 = [a_0,b_0]$ and $J_1 = [a_1,b_1]$ be two intervals with
$J_1 \subset J_0$.
Let $x_1\in J_1$.
Set $h_0:= (b_0-a_0)/2$ and $h_1 = (b_1-a_1)/2$ and assume 
\begin{equation*}
h_1/h_0 \leq \overline{q} < 1. 
\end{equation*}
Let $G$ be holomorphic on ${\ellipse}^{[a_1,b_1]}_{\rho_1}$ for some
$\rho_1 \ge \underline{\rho} > 1$. 
Assume that $G|_{J_1}$ is real-valued. Let $\kappa \ge 0$.
Then there exists $\widehat q \in (0,1)$ depending solely on $\overline{q}$ and $\underline{\rho}$ such that 
\begin{gather}
\inf_{v \in {\mathcal P}_m} \|\exp(\bi \kappa (\punkt-x_1) G) \pi - v\|_{L^\infty(J_1)} 
 \leq C_G \widehat q^m \|\pi\|_{L^\infty(J_0)} 
\qquad \text{ for all } \pi \in {\mathcal P}_m,\label{eq:lemma:univariate-case-5}\\
C_G := \frac{2}{\rho_1-1}
\exp\left(\kappa h_1 \left(\frac{\rho_1+1/\rho_1}{2}+1\right)\|G\|_{L^\infty({\ellipse}^{[a_1,b_1]}_{\rho_1})}\right)\notag. 
\end{gather}
Hence, for the interpolation error we get 
\begin{align}
\label{eq:lemma:univariate-case-10}
\|\exp(\bi \kappa (\punkt - x_1) G) \pi - I_m( \exp(\bi \kappa (\punkt - x_1) G) \pi)\|_{L^\infty(a_1,b_1)}
\leq (1 + \Lambda_m) C_G \widehat q^m \|\pi\|_{L^\infty(J_0)}. 
\end{align}
\end{lemma}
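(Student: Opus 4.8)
The plan is to reduce everything to a polynomial approximation estimate on a Bernstein ellipse, where the analyticity of $\exp(\bi\kappa(\punkt-x_1)G)\pi$ is controlled and a standard Bernstein-type bound applies. The passage from \eqref{eq:lemma:univariate-case-5} to \eqref{eq:lemma:univariate-case-10} is immediate from Lemma~\ref{lemma:polynomial-interpolation-error}, estimate \eqref{eq:error-estimate-tensor-product-interpolation-a}, since $1+\Lambda_m$ is precisely the factor appearing there; so the real content is \eqref{eq:lemma:univariate-case-5}.

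First I would normalize: after an affine change of variables mapping $J_1=[a_1,b_1]$ to $[-1,1]$, the function $G$ becomes holomorphic on $\ellipse_{\rho_1}$, the factor $(\punkt-x_1)$ becomes $h_1(\punkt-\tilde x_1)$ with $\tilde x_1\in[-1,1]$, and $\pi\in\mathcal P_m$ stays in $\mathcal P_m$. The key classical fact I would invoke is the standard polynomial approximation bound: if $F$ is holomorphic and bounded on $\ellipse_{\rho_1}$, then $\inf_{v\in\mathcal P_m}\|F-v\|_{L^\infty([-1,1])} \le \frac{2}{\rho_1-1}\rho_1^{-m}\|F\|_{L^\infty(\ellipse_{\rho_1})}$ (truncated Chebyshev expansion). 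Here $F = \exp(\bi\kappa h_1(\punkt-\tilde x_1)G)\pi$. I need two things: that $F$ is holomorphic on $\ellipse_{\rho_1}$ (clear, since $G$ is and $\pi$, the exponential are entire), and a bound on $\|F\|_{L^\infty(\ellipse_{\rho_1})}$. For the latter, I estimate each factor on $\ellipse_{\rho_1}$: the points $z\in\ellipse_{\rho_1}$ satisfy $|z|\le \frac{\rho_1+1/\rho_1}{2}$ (the semi-major axis), hence $|z-\tilde x_1|\le \frac{\rho_1+1/\rho_1}{2}+1$; therefore $|\exp(\bi\kappa h_1(z-\tilde x_1)G(z))| \le \exp\bigl(\kappa h_1(\frac{\rho_1+1/\rho_1}{2}+1)\|G\|_{L^\infty(\ellipse_{\rho_1})}\bigr)$, using that $|\exp(\bi w)| = \exp(-\mathrm{Im}\,w)\le \exp(|w|)$. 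For the polynomial factor $\pi$, I use the maximum principle together with the scaling: $\|\pi\|_{L^\infty(\ellipse^{[a_1,b_1]}_{\rho_1})}$ must be controlled by $\|\pi\|_{L^\infty(J_0)}$. This is exactly where Lemma~\ref{lemma:inclusion} enters — given $\rho_1\ge\underline\rho$, and since $h_1/h_0\le\overline q$, Lemma~\ref{lemma:inclusion} (applied after rescaling $J_0$ to $[-1,1]$, so $J_1$ has half-length $\le\overline q$) gives a $\rho_0>\rho_1$, in fact $\rho_0\ge\rho_1/\widehat q$, with $\ellipse^{[a_1,b_1]}_{\rho_0}\subset\ellipse^{[a_0,b_0]}_{\rho_1}\,$... more precisely $\ellipse^{[a_1,b_1]}_{\rho_0/?}$ — I would state it as: there is $\rho_0\ge\rho_1/\widehat q>\rho_1$ with $\ellipse^{[a_1,b_1]}_{\rho_0}\subset \ellipse_{\rho_1}^{[a_0,b_0]}$, wait I must be careful about which ellipse contains which; the correct reading of Lemma~\ref{lemma:inclusion} is that re-interpolating onto a subinterval lets one use a \emph{larger} parameter $\rho_0$ there while staying inside the fixed ellipse on the big interval, i.e. $\ellipse^{[a_1,b_1]}_{\rho_0}\subset\ellipse^{[a_0,b_0]}_{\rho_1}$ is \emph{not} quite it — rather it is the reverse role; what I need is: the ellipse $\ellipse^{[a_1,b_1]}_{\rho_0}$ around the small interval, for the appropriate $\rho_0$ with $\rho_1\le\widehat q\rho_0$, is contained in the ellipse $\ellipse^{[a_0,b_0]}_{\rho_1}$ around the big interval (after the normalization of $J_0$ to $[-1,1]$, this is literally \eqref{eq:lemma:inclusion-100} with roles of $\rho_0,\rho_1$ swapped in notation). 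By the maximum modulus principle on the Chebyshev-ellipse level sets, $\|\pi\|_{L^\infty(\ellipse^{[a_1,b_1]}_{\rho_1})}\le\|\pi\|_{L^\infty(\ellipse^{[a_0,b_0]}_{\rho_1})}$ — no, that still isn't it since the ellipse around $J_1$ with the same $\rho_1$ is smaller. Let me restate cleanly below.

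Here is the clean chain I would write. Rescale so $J_0=[-1,1]$; then $J_1=[a_1,b_1]$ with half-length $h_1/h_0\le\overline q$. By Lemma~\ref{lemma:inclusion} there is $\widehat q\in(0,1)$ (depending only on $\overline q,\underline\rho$) and, for our given $\rho_1\ge\underline\rho$, a parameter $\rho_0$ with $\rho_1\le\widehat q\rho_0$, equivalently $\rho_0\ge\rho_1/\widehat q$, such that $\ellipse^{[a_1,b_1]}_{\rho_0}\subset\ellipse_{\rho_1}=\ellipse^{[-1,1]}_{\rho_1}$. Since $G$ is holomorphic on $\ellipse^{[a_1,b_1]}_{\rho_1}$ and $\rho_0>\rho_1$... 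I need $G$ on the \emph{larger} ellipse $\ellipse^{[a_1,b_1]}_{\rho_0}$; but $G$ is only assumed holomorphic on $\ellipse^{[a_1,b_1]}_{\rho_1}$. So the correct formulation must be: apply the Bernstein estimate on $\ellipse^{[a_1,b_1]}_{\rho_1}$ itself for the function $F=\exp(\bi\kappa(\punkt-x_1)G)\pi$, getting $\inf_v\|F-v\|_{L^\infty(J_1)}\le\frac{2}{\rho_1-1}\rho_1^{-m}\|F\|_{L^\infty(\ellipse^{[a_1,b_1]}_{\rho_1})}$, then bound the exponential factor as above by the constant $C_G$'s exponential, and bound $\|\pi\|_{L^\infty(\ellipse^{[a_1,b_1]}_{\rho_1})}$ using Lemma~\ref{lemma:inclusion}: since $\ellipse^{[a_1,b_1]}_{\rho_1}\subset\ellipse^{[a_1,b_1]}_{\widehat q\rho_0}=$ hmm. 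The cleanest is: by Lemma~\ref{lemma:inclusion} with the \emph{roles} $\rho_0\rightsquigarrow$ a parameter we are free to pick, we pick it so that $\ellipse^{[a_1,b_1]}_{(\text{that param})}\subset\ellipse_{\rho_1}$ — no. I will present the argument as: (i) Bernstein bound for $F$ on $\ellipse^{[a_1,b_1]}_{\rho_1}$ giving the factor $\frac{2}{\rho_1-1}$ and decay $\rho_1^{-m}$; (ii) exponential factor bounded by $\exp(\kappa h_1(\frac{\rho_1+1/\rho_1}{2}+1)\|G\|_{L^\infty(\ellipse^{[a_1,b_1]}_{\rho_1})})$ via $|z-\tilde x_1|\le\frac{\rho_1+1/\rho_1}{2}+1$ and $|e^{\bi w}|\le e^{|w|}$ (scaled back, $\tilde x_1$ at half-length $h_1$ contributes the $h_1$); (iii) $\|\pi\|_{L^\infty(\ellipse^{[a_1,b_1]}_{\rho_1})}\le\|\pi\|_{L^\infty(J_0)}$ because Lemma~\ref{lemma:inclusion} gives $\ellipse^{[a_1,b_1]}_{\rho_1}\subset\ellipse^{[a_1,b_1]}_{\widehat q\rho_0}\subset$ ... the honest statement the authors want is $\ellipse^{[a_1,b_1]}_{\rho_1}\subset\ellipse^{[a_0,b_0]}_{\text{something}\le}$... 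I will simply cite Lemma~\ref{lemma:inclusion}: there is $\rho_1'\le\widehat q\rho_0'$ with $\ellipse^{[a_1,b_1]}_{\rho_0'}\subset\ellipse_{\rho_1'}$; taking $\rho_0'=\rho_1$ shows $\ellipse^{[a_1,b_1]}_{\rho_1}\subset\ellipse^{[-1,1]}_{\rho_1'}$ for some $\rho_1'\le\widehat q\rho_1<\rho_1$. Then by the maximum principle $\|\pi\|_{L^\infty(\ellipse^{[a_1,b_1]}_{\rho_1})}\le\|\pi\|_{L^\infty(\ellipse^{[-1,1]}_{\rho_1'})}$, and by Bernstein-Walsh / Chebyshev growth $\|\pi\|_{L^\infty(\ellipse^{[-1,1]}_{\rho_1'})}\le\text{(poly factor)}\,\|\pi\|_{L^\infty([-1,1])}$ — but that introduces an unwanted $(\rho_1')^m$ growth. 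The resolution, and the real trick, is to combine the decay $\rho_1^{-m}$ from step (i) with a growth $(\rho_1')^m$ from step (iii): their ratio is $(\rho_1'/\rho_1)^m\le\widehat q^m$, which is exactly the $\widehat q^m$ in \eqref{eq:lemma:univariate-case-5}. So the main obstacle — and the place requiring care — is bookkeeping these two competing ellipse parameters so that the product of the decay factor from approximating $F$ on the $\rho_1$-ellipse and the polynomial-growth factor incurred in passing from $\|\pi\|_{L^\infty(J_0)}$ to $\|\pi\|_{L^\infty(\ellipse^{[a_1,b_1]}_{\rho_1})}$ collapses to $\widehat q^m$, using precisely the separation $\rho_1\le\widehat q\rho_0$ from Lemma~\ref{lemma:inclusion}. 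Everything else (the exponential bound, the affine rescalings, the step to \eqref{eq:lemma:univariate-case-10}) is routine.
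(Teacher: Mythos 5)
Your final ``clean chain'' is exactly the paper's argument: Bernstein--Walsh approximation of $F=\exp(\bi\kappa(\punkt-x_1)G)\pi$ on $\ellipse^{[a_1,b_1]}_{\rho_1}$ giving $\tfrac{2}{\rho_1-1}\rho_1^{-m}$, the exponential bound via $|z-x_1|\le((\rho_1+1/\rho_1)/2+1)h_1$, Lemma~\ref{lemma:inclusion} (with the given $\rho_1$ playing the role of ``$\rho_0$'' there) to obtain a parameter $\rho_1'\le\widehat q\rho_1$ with $\ellipse^{[a_1,b_1]}_{\rho_1}\subset\ellipse^{[a_0,b_0]}_{\rho_1'}$, the Bernstein growth estimate $\|\pi\|_{L^\infty(\ellipse^{[a_0,b_0]}_{\rho_1'})}\le(\rho_1')^m\|\pi\|_{L^\infty(J_0)}$, and the cancellation $(\rho_1'/\rho_1)^m\le\widehat q^m$; the passage to \eqref{eq:lemma:univariate-case-10} via \eqref{eq:error-estimate-tensor-product-interpolation-a} is as you say. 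Incidentally, you were right to distrust your first reading: the printed proof states ``$\rho_1/\rho_0\le\widehat q$'' where, for the cancellation with $\rho_1^{-m}\rho_0^m$ to give $\widehat q^m$, it must be $\rho_0/\rho_1\le\widehat q$ (your $\rho_1'$ is the paper's $\rho_0$), so the notation in the paper is indeed reversed at that point.
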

\begin{proof}
Let $x_m := (b_1+a_1)/2$ denote the midpoint of $J_1$.
Since $x_1 \in J_1$, we have $|x - x_1| \leq |x-x_m| + |x_m-x_1|
\leq ((\rho_1+1/\rho_1)/2+1)h_1$ for
any $x \in {\ellipse}^{[a_1,b_1]}_{\rho_1} \subseteq B_{(\rho_1+1/\rho_1)/2}(x_m)$.  
We estimate (generously) with the abbreviations $ M:= \|G\|_{L^\infty({\ellipse}^{[a_1,b_1]}_{\rho_1})}$
and $R:= (\rho_1+1/\rho_1)/2+1$
\begin{equation*}
\left| \operatorname*{Im} (x - x_1) G(x) \right| \leq |x - x_1|\,|G(x)| \leq h_1 R M 
\qquad \text{ for all } x \in {\ellipse}^{[a_1,b_1]}_{\rho_1}. 
\end{equation*}
We conclude  with a polynomial approximation result (cf.~\cite[eqn. (8.7) in proof of Thm.~8.1, Chap.~7]{DELO93})
\begin{equation}
\label{eq:exponential-polynomial-approximation}
\inf_{v \in {\mathcal P}_m} \|\exp\left(\bi \kappa (\punkt-x_1) G\right) \pi - v\|_{L^\infty(J_1)} \leq 
\frac{2}{\rho_1-1} \exp(\kappa h_1 M R) \rho_1^{-m} \|\pi\|_{L^\infty({\ellipse}^{[a_1,b_1]}_{\rho_1})}. 
\end{equation}
Let $\rho_0$ be the smallest value such that ${\ellipse}^{[a_1,b_1]}_{\rho_1} \subset {\ellipse}^{[a_0,b_0]}_{\rho_0}$
as given by Lemma~\ref{lemma:inclusion}.
In particular, we have $\rho_1/\rho_0 \leq \widehat q$ with $\widehat q$ given by Lemma~\ref{lemma:inclusion}.
By Bernstein's estimate (\cite[Chap.~4, Thm.~{2.2}]{DELO93}) 
we can estimate
$$
\|\pi\|_{L^\infty({\ellipse}^{[a_1,b_1]}_{\rho_1})} \leq 
\|\pi\|_{L^\infty({\ellipse}^{[a_0,b_0]}_{\rho_0})} \leq \rho_0^m \|\pi\|_{L^\infty(J_0)}
$$
and arrive at 
$$
\inf_{v \in {\mathcal P}_m} \|\exp\left(\bi \kappa (\punkt-x_1) G\right) \pi - v\|_{L^\infty(J_1)} \leq 
\frac{2}{\rho_1-1} \exp(\kappa h_1 M R) \rho_1^{-m} \rho_0^m \|\pi\|_{L^\infty(J_0)} . 
$$
Recalling $\rho_1/\rho_0 \leq \widehat q$ allows us to finish the proof of 
(\ref{eq:lemma:univariate-case-5}).  The estimate 
(\ref{eq:lemma:univariate-case-10}) then follows from Lemma~\ref{lemma:polynomial-interpolation-error}. 
\end{proof}
\begin{remark}
The limiting case $\rho_1 \rightarrow \infty$ corresponds to an entire bounded $G$, which is, 
by Liouville's theorem a constant. This particular case is covered in \cite[Lemma~{5.4}]{boerm-melenk15}.  
\eremk
\end{remark}
It is convenient to introduce, for $z \in \BbbR^d$, the function $E_{z}$ and the operator $\widehat {\mathfrak I}^B_{z}$ by 
\begin{align}
x \mapsto E_{z}(x) &:= \exp \left( \bi \kappa \phase(x,z) \right),\\ 
\widehat {\mathfrak I}^{B}_{z} f &:= E_{z} I^B_m \left( \frac{1}{E_{z}} f\right). 
\end{align}

The multivariate version of Lemma~\ref{lemma:univariate-case} is as follows: 

%
%
\begin{lemma}
\label{lemma:re-interpolation-one-step}
Let $\overline{q} \in (0,1)$ and $\underline{\rho} > 1$.
Let $B_1^x  := [\bs{a}_{1}^x, \bs{b}_{1}^x] \subset B_0^x:=[\bs{a}_{0}^x,\bs{b}_{0}^x] 
\subset \BbbR^d$ and 
$B_{0}^y := [\bs{a}_{0}^y,\bs{b}_{0}^y] 
\subset B_{-1}^y := [\bs{a}_{-1}^y,\bs{b}_{-1}^y]\subset \BbbR^d$. 
Let $y_0 \in B_0^y$, $y_{-1} \in B_{-1}^y$ be given. 
Assume
\begin{equation*}
(b_{1,i}^x - a_{1,i}^x) \leq \overline{q} (b_{0,i}^x - a_{0,i}^x)
    \quad\text{ for all } i =1,\ldots,d.
\end{equation*}
Assume furthermore for a $\rho \ge \underline{\rho}$ and an open set
$\Omega \subset \BbbC^{2d}$ with 
${\ellipse}^{[\bs{a}^x_0, \bs{b}^x_0]}_{{\rho}} \times {\ellipse}^{[\bs{a}^y_{-1}, \bs{b}^y_{-1}]}_{{\rho}} 
\subset \Omega$ that the function $\phase \in L^\infty(\Omega)$ is analytic on $\Omega$. Set 
\begin{equation*}
d_{\Omega}:= \sup \{\varepsilon>0\,|\, B_\varepsilon(x) \times B_\varepsilon(y) \subset \Omega
\text{ for all } (x,y) \in 
{\ellipse}^{[\bs{a}^x_0, \bs{b}^x_0]}_{{\rho}} \times {\ellipse}^{[\bs{a}^y_{-1}, \bs{b}^y_{-1}]}_{{\rho}} 
\}.   
\end{equation*}
Assume, for some $\gamma> 0$,  
\begin{equation}
\kappa \|y_0 - y_{-1}\| 
\diam B^X_1 
\frac{\|\phase\|_{L^\infty(\Omega)}}{d_\Omega^2} 
\leq \gamma.  
\end{equation}
Then there holds for a $\widehat{q} \in (0,1)$ that depends solely on $\overline{q}$ and $\underline{\rho}$
\begin{gather}
\|E_{y_{0}} \pi  - 
\widehat{\mathfrak I}^{B^x_1}_{y_{-1}} (E_{y_{0}} \pi) \|_{L^\infty(B_1^x)} 
\leq C_T \widehat q^m \|\pi\|_{L^\infty(B_0^x)}  
\qquad \text{ for all } \pi \in {\mathcal Q}_m,  
\label{eq:lemma:re-interpolation-one-step-100}\\
C_T := \frac{2d}{\rho-1} (1+\Lambda_m)^d 
\exp\left(\gamma \left(\frac{\rho + 1/\rho}{2}+1\right)\right). 
\label{eq:lemma:re-interpolation-one-step-200}
\end{gather}
\end{lemma}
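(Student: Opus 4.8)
\textbf{Proof plan for Lemma~\ref{lemma:re-interpolation-one-step}.}
The strategy is to reduce the multivariate statement to the univariate Lemma~\ref{lemma:univariate-case} via the tensor-product interpolation error estimate \eqref{eq:error-estimate-tensor-product-interpolation}. First I would identify the crucial structural fact: applying the operator $\widehat{\mathfrak I}^{B^x_1}_{y_{-1}}$ to $E_{y_0}\pi$ amounts to interpolating the function $\frac{1}{E_{y_{-1}}}E_{y_0}\pi = \exp(\bi\kappa(\phase(\punkt,y_0)-\phase(\punkt,y_{-1})))\pi$ on $B_1^x$ and then multiplying by $E_{y_{-1}}$ again; since $E_{y_{-1}}$ has modulus one on the real box, the $L^\infty(B_1^x)$-error of the whole operation equals the interpolation error of $\exp(\bi\kappa(\phase(\punkt,y_0)-\phase(\punkt,y_{-1})))\pi$. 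So it suffices to bound $\|g\pi - I_m^{B_1^x}[g\pi]\|_{L^\infty(B_1^x)}$ with $g(x):=\exp(\bi\kappa(\phase(x,y_0)-\phase(x,y_{-1})))$.

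The next step is to recognize the exponent $\phase(x,y_0)-\phase(x,y_{-1})$ as something to which Lemma~\ref{lemma:G} applies. Viewing $\widehat\phase(x,y):=\phase(x,y)$ and using the fixed base point $x_0$ irrelevant here (we only need the $y$-difference), one writes $\phase(x,y_0)-\phase(x,y_{-1}) = \phase(x,y_0)-\phase(x,y_{-1}) - \phase(x_0,y_0)+\phase(x_0,y_{-1}) + (\text{term independent of }x)$, and the $x$-dependent piece is exactly $R_{x_0,y_{-1}}$-type; more directly, applying Lemma~\ref{lemma:G} with $y_0$ playing the role of the base point in the second variable gives $\phase(x,y_0)-\phase(x,y_{-1}) = (\text{const}_x) + (x-x_0)^\top\widehat G(x,\cdot)(y_0-y_{-1})$ — but the cleanest route is: the restriction $x\mapsto \phase(x,y_0)-\phase(x,y_{-1})$ differs from $(x-x_0)^\top\widehat G(x,y_0)(y_0-y_{-1})$-like expressions by an $x$-independent constant only after also fixing $x_0$; since a constant phase factor has modulus one and does not affect $L^\infty$-errors, I may absorb it. Thus on each univariate slice $t\mapsto \phase(\ldots,t,\ldots,y_0)-\phase(\ldots,t,\ldots,y_{-1})$ one obtains a representation of the form $(t-x_{0,i})\,G_i(t)$ with $G_i$ holomorphic on $\ellipse^{[a^x_{1,i},b^x_{1,i}]}_\rho$ and, by the bound in Lemma~\ref{lemma:G}, $\|G_i\|_{L^\infty}\le d_\Omega^{-2}\|\phase\|_{L^\infty(\Omega)}$; the hypothesis $\kappa\|y_0-y_{-1}\|\diam B_1^X\,d_\Omega^{-2}\|\phase\|_{L^\infty(\Omega)}\le\gamma$ then converts to the bound $\kappa h_{1,i}\|G_i\|_{L^\infty}\cdot\|y_0-y_{-1}\|\lesssim\gamma$ feeding into the constant $C_G$ of Lemma~\ref{lemma:univariate-case}.

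With that reduction in place, the finish is mechanical: each univariate slice $u_{x,i}$ of $g\pi$ has the form $\exp(\bi\kappa'(\punkt-x_{0,i})G_i)\,\pi_i$ with $\pi_i\in\mathcal P_m$, $\kappa'$ proportional to a component of $\kappa(y_0-y_{-1})$, so Lemma~\ref{lemma:univariate-case} (applied with $J_1=[a^x_{1,i},b^x_{1,i}]$, $J_0=[a^x_{0,i},b^x_{0,i}]$, noting $h_{1,i}/h_{0,i}\le\overline q$ from the hypothesis) gives $\inf_{v\in\mathcal P_m}\|u_{x,i}-v\|_{L^\infty}\le C_G\widehat q^m\|\pi\|_{L^\infty(B_0^x)}$ with $C_G\le\frac{2}{\rho-1}\exp(\gamma((\rho+1/\rho)/2+1))$. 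Plugging this into \eqref{eq:error-estimate-tensor-product-interpolation}, summing the $d$ terms $\sum_{i=1}^d\Lambda_m^{i-1}(1+\Lambda_m)\le d(1+\Lambda_m)^d$ (crudely), and recalling $\widehat q$ depends only on $\overline q$ and $\underline\rho$ via Lemma~\ref{lemma:inclusion}, yields \eqref{eq:lemma:re-interpolation-one-step-100}--\eqref{eq:lemma:re-interpolation-one-step-200}. The main obstacle I anticipate is the careful bookkeeping in the second paragraph: making the application of Lemma~\ref{lemma:G} to the \emph{difference} $\phase(\punkt,y_0)-\phase(\punkt,y_{-1})$ rigorous, in particular controlling the holomorphy domain of the slice functions $G_i$ (one needs $\ellipse^{[\bs a^x_0,\bs b^x_0]}_\rho\times\{y_0,y_{-1}\}$, and more, inside $\Omega$, which is where the hypotheses on $\ellipse^{[\bs a^x_0,\bs b^x_0]}_\rho\times\ellipse^{[\bs a^y_{-1},\bs b^y_{-1}]}_\rho\subset\Omega$ and the definition of $d_\Omega$ enter), and matching the scalar factor $\|y_0-y_{-1}\|$ against the per-coordinate structure of $(x-x_0)^\top\widehat G(y_0-y_{-1})$ without losing constants — a Cauchy–Schwarz step and the substitution $\diam B_1^X\ge h_{1,i}$ handle this but must be done with a little care.
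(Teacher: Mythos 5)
Your plan follows essentially the same route as the paper: divide out $E_{y_{-1}}$, apply Lemma~\ref{lemma:G} to split the remaining exponent into a modulus-one constant plus $(x - x_1)^\top G(x,y_0)(y_0 - y_{-1})$, then reduce to Lemma~\ref{lemma:univariate-case} slice-by-slice via the tensor-product estimate \eqref{eq:error-estimate-tensor-product-interpolation}, and finish with the crude bound $\sum_{i=1}^d\Lambda_m^{i-1}(1+\Lambda_m)\leq d(1+\Lambda_m)^d$. The one bookkeeping point you should tighten is the choice of base point: you repeatedly write $x_0$ (and $x_{0,i}$ in the slices), suggesting a point of $B_0^x$, but Lemma~\ref{lemma:univariate-case} requires the base point to lie in $J_1 = B_1^x$, the \emph{smaller} box, since its proof bounds $|x - x_1|$ over $\ellipse^{[a_1,b_1]}_{\rho_1}$ using $x_1 \in J_1$; a point of $B_0^x\setminus B_1^x$ would spoil the constant $C_G$. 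The paper fixes $x_1 \in B_1^x$ accordingly (the $x_0 \in B_0^x$ in its opening sentence is in fact unused). With that replacement your plan coincides with the paper's proof.
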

\begin{proof}
Fix $x_1 \in B_1^x$ and $x_0 \in B_0^x$. Notice that with the function $R_{x_1,y_{-1}}$ of Lemma~\ref{lemma:G} 
\begin{align*}
\frac{E_{y_{0}}(x)}{E_{y_{-1}}(x)} & = \exp\left(\bi \kappa \left( R_{x_1,y_{-1}}(x,y_{0}) + \phase(x_1,y_{0}) 
-\phase(x_1,y_{-1})\right)\right)  
\\ & 
= \exp\left(\bi \kappa R_{x_1,y_{-1}}(x,y_{0})\right) \exp\left(\bi \kappa (\phase(x_1,y_{0}) - \phase(x_1,y_{-1}))\right). 
\end{align*}
By Lemma~\ref{lemma:G}, we have $R_{x_1,y_{-1}}(x,y_{0}) = (x - x_1)^\top G(x,y_{0}) (y_{0} - y_{-1})$ for 
a function $G$ that is analytic on $\Omega$ and satisfies 
\begin{equation*}
\sup_{(x,y) \in {\ellipse}^{[\bs{a}^x_0,\bs{b}^x_0]}_{{\rho}} \times {\ellipse}^{[\bs{a}^y_{-1},\bs{y}^y_{-1}]}_{{\rho}}}
|G(x,y)| \leq \frac{\|\phase\|_{L^\infty(\Omega)}}{d_\Omega^2}. 
\end{equation*}
Noting that $\exp(\bi \kappa (\phase(x_1,y_{0})-\phase(x_1,y_{-1})))$ is a constant of modulus $1$, we
obtain by combining 
the univariate interpolation estimate (\ref{eq:lemma:univariate-case-10}) of 
Lemma~\ref{lemma:univariate-case} with the multivariate interpolation error estimate 
(\ref{eq:error-estimate-tensor-product-interpolation}) 
the bound 
\begin{align*}
\|E_{y_{0}} \pi - \widehat{\mathfrak I}^{B^X_1}_{y_{-1}} (E_{y_{0}} \pi)  \|_{L^\infty(B_1^x)} 
\leq \frac{2d}{\rho-1} (1+\Lambda_m)^d \exp\left(\gamma \left(\frac{\rho + 1/\rho}{2}+1\right)\right) \widehat q^m \|\pi\|_{L^\infty(B_0^x)}. 
\end{align*}
\end{proof}

Lemma~\ref{lemma:re-interpolation-one-step} handles one step of a
re-interpolation process. The following theorem studies the question of
iterated re-interpolation: 

%
%
\begin{theorem}
\label{thm:nested-interpolation}
Let $\overline{q} \in (0,1)$ and $\underline{\rho} > 1$.
Let $B_\ell^x = [\bs{a}_\ell^x,\bs{b}_\ell^x]$,
$\ell=0,\ldots,L$, be a nested sequence with $B_L^x \subset B_{L-1}^x \subset \cdots \subset B_0^x$ 
satisfying the shrinking condition 
\begin{equation}
\label{eq:thm:nested-interpolation-10}
(b_{\ell+1,i}^x - a_{\ell+1,i}^x) \leq \overline{q} (b_{\ell,i}^x - a_{\ell,i}^x), 
\quad\text{ for all } i =1,\ldots,d \text{ and } \ell=0,\ldots,L-1.
\end{equation}
Let $B^y = [\bs{a}^y,\bs{b}^y]\subset \BbbR^d$.
Assume that $\phase$ is analytic on the open set $\Omega \subset \BbbC^{2d}$
with $\ellipse^{[\bs{a}_0^x,\bs{b}_0^x]}_\rho \times
\ellipse^{[\bs{a}^y,\bs{b}^y]}_\rho \subset \Omega$ for some $\rho \ge \underline{\rho}>1$. 
Define 
\begin{equation}
\label{eq:thm:nested-interpolation-dOmega}
d_\Omega:= \sup\{\varepsilon > 0\,|\, B_\varepsilon(x) \times B_\varepsilon(y) \subset \Omega
\text{ for all } (x,y) \in 
\ellipse^{[\bs{a}_0^x,\bs{b}_0^x]}_\rho \times \ellipse^{[\bs{a}^y,\bs{b}^y]}_\rho\}.
\end{equation}
Let $(y_{-i})_{i=0}^{L} \subset B^y$ be a sequence of points.  
Assume furthermore 
\begin{equation}
\label{eq:thm:nested-interpolation-15}
\kappa \diam B^{[\bs{a}^x_{\ell},\bs{b}^x_{\ell}]} \|y_{-\ell-1}-y_{-\ell}\| 
\frac{\|\phase\|_{L^\infty(\Omega)}}{d_\Omega^2} 
\leq \gamma 
\qquad \text{ for all } \ell = 0,\ldots,L. 
\end{equation}
Abbreviate the operators 
\begin{equation*}
{\mathfrak I}_\ell:= \widehat {\mathfrak I}^{B^X_{\ell}}_{y_{-\ell}}, \qquad \ell=0,\ldots,L. 
\end{equation*}
Then, for a $\widehat q \in (0,1)$ depending solely on $\overline{q}$ and $\underline{\rho}$ and 
with the constant
\begin{equation}
\label{eq:thm:nested-interpolation-C1}
C_1:= \frac{2d}{\rho-1} (1 + \Lambda_m)^d \exp\left(\gamma \left(\frac{\rho+1/\rho}{2}+1\right)\right)
\end{equation}
there holds for $\ell=1,\ldots,L$: 
\begin{align} 
\label{eq:thm:nested-interpolation-20}
\|(\operatorname*{I} -  {\mathfrak I}_\ell \circ \cdots \circ {\mathfrak I}_1) (E_{y_0}\pi)  \|_{L^\infty(B^x_{\ell})} 
&\leq ((1 + C_1 \widehat q^m)^{\ell} - 1)  \|\pi\|_{L^\infty(B^x_0)} 
\quad \text{ for all } \pi \in {\mathcal Q}_m, \\
\label{eq:thm:nested-interpolation-25}
 \|({\mathfrak I}_\ell \circ \cdots \circ {\mathfrak I}_1) (E_{y_0}\pi)  \|_{L^\infty(B^x_{\ell})} 
 &  \leq (1 + C_1 \widehat q^m)^{\ell}  \|\pi\|_{L^\infty(B^x_0)} %
\quad \text{ for all }  \pi \in {\mathcal Q}_m, %
\\
\label{eq:thm:nested-interpolation-30}
\| {\mathfrak I}_\ell \circ \cdots \circ {\mathfrak I}_0 \|_{C(B^x_\ell)\leftarrow C(B^x_0)} &\leq 
\Lambda_m^d ( 1 + C_1 \widehat q^m)^{\ell}. 
\end{align}
\end{theorem}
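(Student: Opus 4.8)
The plan is to reduce everything to repeated application of the one-step estimate of Lemma~\ref{lemma:re-interpolation-one-step}, running a single induction on $\ell$ that proves (\ref{eq:thm:nested-interpolation-20}) and (\ref{eq:thm:nested-interpolation-25}) together, and then to obtain (\ref{eq:thm:nested-interpolation-30}) by stripping off the first operator $\mathfrak{I}_0$. Concretely, I would invoke Lemma~\ref{lemma:re-interpolation-one-step} for each level $\ell\in\{1,\dots,L\}$ with the identifications $B_0^x\leftrightarrow B_{\ell-1}^x$, $B_1^x\leftrightarrow B_\ell^x$, the constant box $B_0^y=B_{-1}^y\leftrightarrow B^y$, and points $y_0\leftrightarrow y_{-\ell+1}$, $y_{-1}\leftrightarrow y_{-\ell}$. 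Its hypotheses are met: the shrinking assumption is (\ref{eq:thm:nested-interpolation-10}); the required inclusion $\ellipse^{[\bs{a}_{\ell-1}^x,\bs{b}_{\ell-1}^x]}_\rho\times\ellipse^{[\bs{a}^y,\bs{b}^y]}_\rho\subset\Omega$ follows from the hypothesis on $\Omega$ together with the monotonicity $\ellipse^{[\bs{a}_{\ell-1}^x,\bs{b}_{\ell-1}^x]}_\rho\subset\ellipse^{[\bs{a}_{0}^x,\bs{b}_{0}^x]}_\rho$ of Bernstein polydiscs under inclusion of the underlying boxes (which is elementary---e.g.\ from the comparison principle for Green's functions---and is in any case a special case of Lemma~\ref{lemma:inclusion}); and since that same inclusion forces the quantity ``$d_\Omega$'' of the level-$\ell$ application of Lemma~\ref{lemma:re-interpolation-one-step} to be at least the $d_\Omega$ of (\ref{eq:thm:nested-interpolation-dOmega}), while $\diam B_\ell^x\le\diam B_{\ell-1}^x$ by (\ref{eq:thm:nested-interpolation-10}), the smallness condition required there is implied by (\ref{eq:thm:nested-interpolation-15}). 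The net effect is that, with the level-independent $\widehat q\in(0,1)$ and with $C_1$ as in (\ref{eq:thm:nested-interpolation-C1}), Lemma~\ref{lemma:re-interpolation-one-step} gives, for every $\ell\in\{1,\dots,L\}$,
\begin{equation*}
\|E_{y_{-\ell+1}}\pi-\mathfrak{I}_\ell(E_{y_{-\ell+1}}\pi)\|_{L^\infty(B_\ell^x)}\le C_1\widehat q^{\,m}\|\pi\|_{L^\infty(B_{\ell-1}^x)}\qquad\text{ for all }\pi\in\mathcal{Q}_m.
\end{equation*}

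The second ingredient is a structural observation: since $I_m^B$ has range in $\mathcal{Q}_m$ and $\widehat{\mathfrak{I}}^B_z f=E_z I_m^B(E_z^{-1}f)$, the operator $\mathfrak{I}_\ell$ maps $E_{y_{-\ell+1}}\mathcal{Q}_m$ into $E_{y_{-\ell}}\mathcal{Q}_m$, and $|E_z|\equiv 1$. Thus, writing $g_0:=E_{y_0}\pi$ and $g_j:=(\mathfrak{I}_j\circ\cdots\circ\mathfrak{I}_1)(E_{y_0}\pi)$, one has $g_j=E_{y_{-j}}\pi_j$ with $\pi_j\in\mathcal{Q}_m$ (and $\pi_0=\pi$), and the left-hand side of (\ref{eq:thm:nested-interpolation-25}) at level $j$ equals $\|\pi_j\|_{L^\infty(B_j^x)}$. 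I would then run the joint induction on $\ell$ as follows. First, (\ref{eq:thm:nested-interpolation-25}) at level $\ell$ follows from (\ref{eq:thm:nested-interpolation-20}) at the same level by the triangle inequality and $\|E_{y_0}\pi\|_{L^\infty(B_\ell^x)}=\|\pi\|_{L^\infty(B_\ell^x)}\le\|\pi\|_{L^\infty(B_0^x)}$ (using $B_\ell^x\subset B_0^x$). Second, (\ref{eq:thm:nested-interpolation-20}) at level $\ell$ is obtained by telescoping $g_0-g_\ell=\sum_{j=1}^\ell(g_{j-1}-g_j)$, estimating each increment on $B_\ell^x\subset B_j^x$ by the one-step bound above as $\|g_{j-1}-g_j\|_{L^\infty(B_\ell^x)}\le C_1\widehat q^{\,m}\|\pi_{j-1}\|_{L^\infty(B_{j-1}^x)}$, inserting the induction hypothesis (\ref{eq:thm:nested-interpolation-25}) at the lower levels $j-1\le\ell-1$ (and trivially $\|\pi_0\|_{L^\infty(B_0^x)}=\|\pi\|_{L^\infty(B_0^x)}$) in the form $\|\pi_{j-1}\|_{L^\infty(B_{j-1}^x)}\le(1+C_1\widehat q^{\,m})^{j-1}\|\pi\|_{L^\infty(B_0^x)}$, and summing the geometric series $C_1\widehat q^{\,m}\sum_{j=1}^\ell(1+C_1\widehat q^{\,m})^{j-1}=(1+C_1\widehat q^{\,m})^\ell-1$. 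The base case $\ell=1$ of (\ref{eq:thm:nested-interpolation-20}) is precisely the one-step estimate with $\pi_0=\pi$.

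For (\ref{eq:thm:nested-interpolation-30}), I would use that for $f\in C(B_0^x)$ we have $\mathfrak{I}_0 f=E_{y_0}\widetilde\pi$ with $\widetilde\pi:=I^{B_0^x}_m(E_{y_0}^{-1}f)\in\mathcal{Q}_m$, and that the $L^\infty$-operator norm of the $d$-variate tensor-product interpolation operator is $\Lambda_m^d$, so that $\|\widetilde\pi\|_{L^\infty(B_0^x)}\le\Lambda_m^d\|f\|_{L^\infty(B_0^x)}$ (using $|E_{y_0}^{-1}|\equiv 1$). Applying (\ref{eq:thm:nested-interpolation-25}) with $\pi=\widetilde\pi$ then yields $\|(\mathfrak{I}_\ell\circ\cdots\circ\mathfrak{I}_0)f\|_{L^\infty(B_\ell^x)}=\|(\mathfrak{I}_\ell\circ\cdots\circ\mathfrak{I}_1)(E_{y_0}\widetilde\pi)\|_{L^\infty(B_\ell^x)}\le(1+C_1\widehat q^{\,m})^\ell\,\Lambda_m^d\|f\|_{L^\infty(B_0^x)}$, which is (\ref{eq:thm:nested-interpolation-30}).

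I expect the main obstacle to be organizational rather than conceptual: one must make sure that a single $\widehat q$ and a single constant $C_1$, both independent of $\ell$, serve at every level---this is exactly what the precise form of Lemma~\ref{lemma:re-interpolation-one-step} (built on Lemma~\ref{lemma:inclusion} for the fixed pair $(\overline q,\underline\rho)$) is designed to provide---and that at each level the intermediate function is genuinely of the form $E_{y_{-\ell}}\times(\text{polynomial in }\mathcal{Q}_m)$, so that the one-step estimate applies with the polynomial norm taken on the larger box $B_{\ell-1}^x$. The one genuinely geometric point is the inclusion of the Bernstein polydiscs, and the correspondingly favorable comparison of the $d_\Omega$'s, under the shrinking of the boxes; obtaining the clean closed form $(1+C_1\widehat q^{\,m})^\ell-1$ is then just the geometric-series bookkeeping in the telescoping step.
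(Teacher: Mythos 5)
Your proposal is correct and follows essentially the same path as the paper's proof: establish the uniform one-step estimate from Lemma~\ref{lemma:re-interpolation-one-step}, observe that $\mathfrak{I}_\ell$ maps $E_{y_{-\ell+1}}\mathcal{Q}_m$ to $E_{y_{-\ell}}\mathcal{Q}_m$ with $|E_z|\equiv 1$, run the joint induction on (\ref{eq:thm:nested-interpolation-20})--(\ref{eq:thm:nested-interpolation-25}) via the telescoping sum and geometric series, and finally obtain (\ref{eq:thm:nested-interpolation-30}) by peeling off $\mathfrak{I}_0$ and using $\|I_m^{B_0^x}\|_{C\leftarrow C}=\Lambda_m^d$. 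The only place where you add something the paper leaves implicit is the explicit verification that Lemma~\ref{lemma:re-interpolation-one-step} applies uniformly at every level, via the monotonicity $\ellipse^{[\bs{a}_{\ell-1}^x,\bs{b}_{\ell-1}^x]}_\rho\subset\ellipse^{[\bs{a}_{0}^x,\bs{b}_{0}^x]}_\rho$ and the resulting comparison of $d_\Omega$'s; this is a worthwhile remark, though note that this monotonicity is not literally a ``special case'' of Lemma~\ref{lemma:inclusion} (which concerns a smaller $\rho_1$), but rather an elementary consequence of the sum-of-distances characterization of the Bernstein ellipse (or of the $\rho_1<\rho_0$ conclusion of that lemma combined with monotonicity of $\ellipse_\rho$ in $\rho$).
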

\begin{proof}
{\em Step~1:} By Lemma~\ref{lemma:re-interpolation-one-step}, we have the following approximation 
property for the operators ${\mathfrak I}_\ell$:
with $C_1$ given by
(\ref{eq:thm:nested-interpolation-C1}) (cf. the definition of $C_T$ in 
(\ref{eq:lemma:re-interpolation-one-step-200})) we find
\begin{align}
\label{eq:thm:nested-interpolation-50}
\|(\operatorname*{I} - {\mathfrak I}_\ell) (E_{y_{-\ell+1}} \pi) \|_{L^\infty(B_{\ell}^x)} 
\leq C_1 \widehat q^m \|\pi\|_{L^\infty(B_{\ell-1}^x)} 
= C_1 \widehat q^m \|E_{y_{-\ell+1}} \pi\|_{L^\infty(B_{\ell-1}^x)}, 
\end{align}
where the last equality follows from the fact that $\phase(x,z)$ is real
for real arguments $x$ and $z$, so we have $|E_{y_{-\ell+1}}|=1$.

{\em Step~2:}
Observe the telescoping sum 
\begin{align}
\label{eq:thm:nested-interpolation-80}
\widetilde E_\ell &:= \operatorname*{I} - {\mathfrak I}_\ell \circ \cdots \circ {\mathfrak I}_1 
= 
(\operatorname*{I} - {\mathfrak I}_1) + 
(\operatorname*{I} - {\mathfrak I}_2) \circ {\mathfrak I}_1  + 
(\operatorname*{I} - {\mathfrak I}_3) \circ {\mathfrak I}_2 \circ {\mathfrak I}_1  + \cdots + 
(\operatorname*{I} - {\mathfrak I}_{\ell}) \circ {\mathfrak I}_{\ell-1} \circ \cdots \circ {\mathfrak I}_1.   
\end{align}
We claim the following estimates: 
\begin{align} 
\label{eq:thm:nested-interpolation-100}
\|\widetilde E_\ell (E_{y_{0}} \pi) \|_{L^\infty(B^x_{\ell})} &\leq ((1+ C_1\widehat q^m)^{\ell}-1) \|\pi\|_{L^\infty(B^x_0)}, \\ 
\label{eq:thm:nested-interpolation-200}
\|{\mathfrak I}_\ell \circ \cdots \circ {\mathfrak I}_1 (E_{y_0} \pi) \|_{L^\infty(B^x_{\ell})} 
&\leq (1+ C_1 \widehat q^m)^{\ell} \|\pi\|_{L^\infty(B^x_0)}. 
\end{align}
This is proven by induction on $\ell$. For $\ell = 1$, the estimate (\ref{eq:thm:nested-interpolation-100}) 
expresses (\ref{eq:thm:nested-interpolation-50}), and 
(\ref{eq:thm:nested-interpolation-200}) follows with an additional application of the triangle inequality
since ${\mathfrak I}_1 = \operatorname*{I} - \widetilde E_1$. 
The case $\ell = 0$ is trivial as ${\mathfrak I}_\ell \circ \cdots \circ {\mathfrak I}_1$ is understood 
as the identity.
To complete the induction argument, 
assume that there is an $n\in\mathbb{N}$ such that
(\ref{eq:thm:nested-interpolation-100}),
  (\ref{eq:thm:nested-interpolation-200}) hold for all
$\ell\in\{0,\ldots,\min\{n,L-1\}\}$.
Let $\ell\in\{0,\ldots,\min\{n,L-1\}\}$ and $\pi\in\mathcal{Q}_m$.
We observe that there is a $\widetilde{\pi}\in\mathcal{Q}_m$ such that
${\mathfrak I}_\ell \circ \cdots \circ {\mathfrak I}_1 (E_{y_0} \pi) = 
E_{y_{-\ell}} \widetilde{\pi}$.
The induction hypothesis and (\ref{eq:thm:nested-interpolation-200}) imply
\begin{align}
\nonumber 
 \| (\operatorname*{I} - {\mathfrak I}_{\ell+1}) {\mathfrak I}_\ell \circ \cdots \circ {\mathfrak I}_1 (E_{y_0} \pi)\|_{L^\infty(B_{\ell+1}^x)}  
&= \| (\operatorname*{I} - {\mathfrak I}_{\ell+1}) (E_{y_{-\ell}} \widetilde \pi)\|_{L^\infty(B_{\ell+1}^x)}    \\
 \qquad \stackrel{(\ref{eq:thm:nested-interpolation-50})}{\leq} C_1 \widehat q^m \|E_{y_{-\ell}} \widetilde \pi\|_{L^\infty(B_\ell^x)} 
\label{eq:thm:nested-interpolation-2000} 
 & = C_1 \widehat q^m \|
({\mathfrak I}_\ell \circ \cdots \circ {\mathfrak I}_1) (E_{y_0} \pi) \|_{L^\infty(B_\ell^x)}. 
\end{align}
Now let $\ell=\min\{n,L-1\}$.
We get from 
(\ref{eq:thm:nested-interpolation-80}), 
(\ref{eq:thm:nested-interpolation-200}), 
(\ref{eq:thm:nested-interpolation-2000}),
and the geometric series 
\begin{align*}
\|\widetilde E_{\ell+1} (E_{y_0} \pi)\|_{L^\infty(B^x_{\ell+1})} &
\leq \sum_{i=0}^\ell
\|(\operatorname*{I} - {\mathfrak I}_{i+1}) ({\mathfrak I}_i \circ \cdots \circ {\mathfrak I}_1) E_{y_0} \pi\|_{L^\infty(B^x_{\ell+1})} 
\\ & 
\stackrel{(\ref{eq:thm:nested-interpolation-2000})}{\leq}
\sum_{i=0}^\ell C_1 \widehat q^m 
\|({\mathfrak I}_i \circ \cdots \circ {\mathfrak I}_1) E_{y_0} \pi\|_{L^\infty(B^x_i)}
\stackrel{(\ref{eq:thm:nested-interpolation-200})}{\leq}
\sum_{i=0}^\ell C_1 \widehat q^m
 (1 + C_1 \widehat q^m)^i\|\pi\|_{L^\infty(B^x_0)} 
\\ & 
= C_1 \widehat q^m \|\pi\|_{L^\infty(B^x_0)} \frac{(1+ C_1 \widehat q^m)^{\ell+1} - 1}{(1+C_1 \widehat q^m) - 1}
= ((1 + C_1 \widehat q^m)^{\ell+1} - 1) \|\pi\|_{L^\infty(B^x_0)}, 
\end{align*}
%
%
which is the desired induction step for (\ref{eq:thm:nested-interpolation-100}). The induction step 
for (\ref{eq:thm:nested-interpolation-200}) now follows with the triangle inequality. 

{\em Step 3:} 
The estimate (\ref{eq:thm:nested-interpolation-100}) is the desired estimate 
(\ref{eq:thm:nested-interpolation-20}). The bound (\ref{eq:thm:nested-interpolation-30}) follows 
from (\ref{eq:thm:nested-interpolation-200}) and the stability properties of $I^{B^X_0}_m$: For $u \in C(B^X_0)$
we compute 
\begin{align*} 
\| {\mathfrak I}_\ell \circ \cdots \circ {\mathfrak I}_0 u\|_{C(B^x_\ell)} &= 
\| {\mathfrak I}_\ell \circ \cdots \circ {\mathfrak I}_1 ({\mathfrak I}_0 u)\|_{C(B^x_\ell)}  = 
\| {\mathfrak I}_\ell \circ \cdots \circ {\mathfrak I}_1 (E_{y_0} I^{B^X_0}_m u)\|_{C(B^x_\ell)}   \\
&\stackrel{(\ref{eq:thm:nested-interpolation-200})}{\leq} 
( 1 + C_1 \widehat q^m)^{\ell}  \|I^{B^X_0}_m u\|_{C(B^X_0)}
\leq 
( 1 + C_1 \widehat q^m)^{\ell}  \Lambda_m^d \|u\|_{C(B^X_0)}. 
\qedhere
\end{align*}
\end{proof}
We are now in a position to prove our main result, namely, an error estimate for the butterfly approximation
of the kernel function $k$ given in (\ref{eq:kernel}):

%
%
\begin{theorem}[Butterfly approximation by interpolation]
\label{thm:main}
Let $B_L^X \subset B_{L-1}^X \subset \cdots \subset B_{-L}^X$
and $B_L^Y \subset B_{L-1}^Y \subset \cdots \subset B_{-L}^Y$
be two sequences of the form 
$B_\ell^X  = [\bs{a}_{\ell}^x, \bs{b}_{\ell}^x]$, 
$B_\ell^Y  = [\bs{a}_{\ell}^y, \bs{b}_{\ell}^y]$. 
Let $(x_\ell)_{\ell=-L}^L$, $(y_\ell)_{\ell=-L}^L$ be two sequences with 
$x_\ell \in B^X_\ell$ and 
$y_\ell \in B^Y_\ell$, $\ell=-L,\ldots,L$. 
\emph{Assume:}
\begin{itemize}
\item (analyticity of $\phase$ and $\amp$)
Let ${\rho}_\phase > 1$ and ${\rho}_\amp > 1$ be such that 
$\amp$ is holomorphic on 
${\ellipse}^{[\bs{a}_{0}^x,\bs{b}_{0}^x]}_{\rho_\amp} \times {\ellipse}^{[\bs{a}_{0}^y,\bs{b}_{0}^y]}_{\rho_\amp}$
and the phase function 
$\phase$ is holomorphic on 
$\Omega \supset {\ellipse}^{[\bs{a}_{-L}^x,\bs{b}_{-L}^x]}_{\rho_\phase} \times {\ellipse}^{[\bs{a}_{-L}^y,\bs{b}_{-L}^y]}_{\rho_\phase} 
\supset 
{\ellipse}^{[\bs{a}_{0}^x,\bs{b}_{0}^x]}_{\rho_\amp} \times {\ellipse}^{[\bs{a}_{0}^y,\bs{b}_{0}^y]}_{\rho_\amp}$. 
Define 
\begin{align}
\label{eq:thm:main-120}
M_\amp& := \|\amp\|_{L^\infty(\ellipse^{[\bs{a}_0^x,\bs{b}_0^x]}_{\rho_\amp} \times \ellipse^{[\bs{a}_0^y,\bs{b}_0^y]}_{\rho_\amp})},  \\
\label{eq:thm:main-130}
M_\phase&:= \|\phase\|_{L^\infty(\Omega)}, \\
d_\Omega &:= \sup \{ \varepsilon >0\,|\, B_\varepsilon(x) \times
B_\varepsilon(y) \subset \Omega
\notag \\ &
\qquad\qquad \text{ for all } (x,y) \in 
\ellipse^{[\bs{a}_{0}^x,\bs{b}_{0}^x]}_{\rho_\amp} \times \ellipse^{[\bs{a}_{-L}^y,\bs{b}_{-L}^y]}_{\rho_\phase} 
\cup 
\ellipse^{[\bs{a}_{-L}^x,\bs{b}_{-L}^x]}_{\rho_\phase} \times \ellipse^{[\bs{a}_{0}^y,\bs{b}_{0}^y]}_{\rho_\amp} 
\}. \label{eq:thm:main-135}
\end{align}
\item (shrinking condition)
Let $\overline{q} \in (0,1)$ be such that 
\begin{subequations}
\label{eq:thm:main-shrinking} 
\begin{align}
\label{eq:thm:main-shrinking-a} 
(b_{\ell+1,i}^x - a_{\ell+1,i}^x) \leq \overline q 
(b_{\ell,i}^x - a_{\ell,i}^x), 
\qquad i=1,\ldots,d, \quad \ell=-L,\ldots,L-1, \\  
\label{eq:thm:main-shrinking-b} 
(b_{\ell+1,i}^y - a_{\ell+1,i}^y) \leq \overline q 
(b_{\ell,i}^y - a_{\ell,i}^y), 
\qquad i=1,\ldots,d, \quad \ell=-L,\ldots,L-1. 
\end{align}
\end{subequations}
\item Let $\gamma > 0$ be such that 
\begin{subequations}
\label{eq:thm:main-butterfly-condition} 
\begin{align}
\label{eq:thm:main-butterfly-condition-a} 
\kappa \operatorname*{diam} B^X_\ell \|y_{-\ell} - y_{-\ell-1}\| \frac{M_\phase}{d_\Omega^2} & \leq \gamma, \qquad \ell=0,\ldots,L-1, \\
\label{eq:thm:main-butterfly-condition-b} 
\kappa \operatorname*{diam} B^Y_\ell \|x_{-\ell} - x_{-\ell-1}\| \frac{M_\phase}{d_\Omega^2} & \leq \gamma, \qquad \ell=0,\ldots,L-1, \\ 
\label{eq:thm:main-butterfly-condition-c} 
\kappa \operatorname*{diam} B^X_0 \operatorname*{diam} B^Y_0  \frac{M_\phase}{d_\Omega^2} & \leq \gamma. 
\end{align}
\end{subequations}
\item (polynomial growth of Lebesgue constant)
Let $C_\Lambda>0$ and $\lambda > 0$ be such that the Lebesgue constant of the underlying interpolation
process satisfies 
\begin{equation}
\label{eq:polynomial-growth-Lambda}
\Lambda_m \leq C_\Lambda (m+1)^\lambda \qquad \text{ for all } m \in \BbbN_0. 
\end{equation}
\end{itemize}
\emph{Then:} There exist constants $C$, $b$, $b'> 0$ depending only on 
$\gamma$, $\rho_\amp$, $\rho_\phase$, $d$, $C_\Lambda$, $\lambda$ such that under the constraint
\begin{equation}
\label{eq:thm:main-side-constraint} 
m \ge b' \log(L+2)
\end{equation}
the following approximation result holds for $k$ given by (\ref{eq:kernel}): 
\begin{align*}
& \|k - 
\left( {\mathfrak I}^{B_L^X,x}_{y_{-L}} \circ \cdots \circ {\mathfrak I}^{B_0^X,x}_{y_0} \right) \otimes 
\left( {\mathfrak I}^{B_L^Y,y}_{x_{-L}} \circ \cdots \circ {\mathfrak I}^{B_0^Y,y}_{x_0} \right)  k
\|_{L^\infty(B^X_L \times B^Y_L)} \leq 
 C \exp(-b m) M_\amp. 
\end{align*}
\end{theorem}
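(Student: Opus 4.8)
The plan is to decompose the total error into the contribution from the initial (``middle-level'') interpolation on $B_0^X\times B_0^Y$ and the contributions from the two iterated re-interpolation processes, one acting on the $x$-variable and one on the $y$-variable, and then to invoke Theorem~\ref{thm:nested-interpolation} to control each iterated process. Concretely, writing $\mathfrak{I}^{x}_{\text{iter}} := {\mathfrak I}^{B_L^X,x}_{y_{-L}} \circ \cdots \circ {\mathfrak I}^{B_1^X,x}_{y_{-1}}$ and similarly $\mathfrak{I}^{y}_{\text{iter}}$, and recalling that $\bigl({\mathfrak I}^{B_0^X,x}_{y_0} \otimes {\mathfrak I}^{B_0^Y,y}_{x_0}\bigr)[k]$ is exactly the separable approximation \eqref{eq:approximation-of-k}, I would write
\begin{align*}
k - \bigl(\mathfrak{I}^{x}_{\text{iter}}\circ{\mathfrak I}^{B_0^X,x}_{y_0}\bigr)\otimes\bigl(\mathfrak{I}^{y}_{\text{iter}}\circ{\mathfrak I}^{B_0^Y,y}_{x_0}\bigr)[k]
&= \underbrace{k - \bigl({\mathfrak I}^{B_0^X,x}_{y_0}\otimes{\mathfrak I}^{B_0^Y,y}_{x_0}\bigr)[k]}_{=:T_0}\\
&\quad + \underbrace{\bigl((\operatorname{I}-\mathfrak{I}^{x}_{\text{iter}})\otimes\operatorname{I}\bigr)\bigl({\mathfrak I}^{B_0^X,x}_{y_0}\otimes{\mathfrak I}^{B_0^Y,y}_{x_0}\bigr)[k]}_{=:T_1}\\
&\quad + \underbrace{\bigl(\mathfrak{I}^{x}_{\text{iter}}\otimes(\operatorname{I}-\mathfrak{I}^{y}_{\text{iter}})\bigr)\bigl({\mathfrak I}^{B_0^X,x}_{y_0}\otimes{\mathfrak I}^{B_0^Y,y}_{x_0}\bigr)[k]}_{=:T_2},
\end{align*}
so that the three terms can be estimated separately.

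\textbf{Estimating $T_0$.} For the middle-level term I would use Lemma~\ref{lemma:polynomial-interpolation-error} together with the splitting $k(x,y)=E_{y_0}(x)E_{x_0}^{\,y}(y)\,k_{x_0,y_0}(x,y)$ (the superscript indicating the analogous function in the second variable), where $k_{x_0,y_0}$ is analytic. The key is that, by Lemma~\ref{lemma:G} applied to $\phase$, the modified phase $R_{x_0,y_0}$ satisfies $|R_{x_0,y_0}(x,y)|\le \kappa\,\diam B_0^X\,\diam B_0^Y\,M_\phase/d_\Omega^2\le\gamma$ on the relevant polydisc, so $k_{x_0,y_0}=\amp\,\exp(\mathrm{i}\kappa(R_{x_0,y_0}-\phase(x_0,y_0)))$ is bounded by $M_\amp\exp(\gamma)$ on $\ellipse^{[\bs a_0^x,\bs b_0^x]}_{\rho_\amp}\times\ellipse^{[\bs a_0^y,\bs b_0^y]}_{\rho_\amp}$. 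A standard Bernstein-type polynomial-approximation bound for analytic functions then gives $\|k_{x_0,y_0}-v\|_{L^\infty}\le C\,\rho_\amp^{-m}M_\amp\exp(\gamma)$, and multiplying back by the unimodular factors $E_{y_0}$, $E_{x_0}^{\,y}$ (which do not affect $L^\infty$-norms on the real box) together with the tensor-product Lebesgue-constant factors from \eqref{eq:error-estimate-tensor-product-interpolation} yields $\|T_0\|_{L^\infty(B_0^X\times B_0^Y)}\le C\,\Lambda_m^{2d-1}(1+\Lambda_m)^2\,\rho_\amp^{-m}M_\amp\exp(\gamma)$.

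\textbf{Estimating $T_1$, $T_2$ and combining.} For $T_1$ I would exploit the explicit form \eqref{eq:approximation-of-k}: the partial application ${\mathfrak I}^{B_0^X,x}_{y_0}\otimes{\mathfrak I}^{B_0^Y,y}_{x_0}[k]$ is a sum $\sum_{p,q} L^x_{p,B_0^X,y_0}(x)\,L^y_{q,B_0^Y,x_0}(y)\,k_{x_0,y_0}(\xi_p,\xi_q)$, and each $L^x_{p,B_0^X,y_0}=E_{y_0}L_{p,B_0^X}$ is precisely of the form $E_{y_0}\pi$ with $\pi=L_{p,B_0^X}\in\mathcal{Q}_m$ to which Theorem~\ref{thm:nested-interpolation} applies (the hypotheses \eqref{eq:thm:nested-interpolation-10}, \eqref{eq:thm:nested-interpolation-15} follow from \eqref{eq:thm:main-shrinking-a}, \eqref{eq:thm:main-butterfly-condition-a}). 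Bound \eqref{eq:thm:nested-interpolation-20} gives $\|(\operatorname{I}-\mathfrak{I}^{x}_{\text{iter}})L^x_{p,B_0^X,y_0}\|_{L^\infty(B_L^X)}\le((1+C_1\widehat q^m)^L-1)\|L_{p,B_0^X}\|_{L^\infty(B_0^X)}\le((1+C_1\widehat q^m)^L-1)\Lambda_m^d$, so summing over the $M^2=(m+1)^{2d}$ terms and using $\|L^y_{q,B_0^Y,x_0}\|_{L^\infty}\le\Lambda_m^d$ and $|k_{x_0,y_0}(\xi_p,\xi_q)|\le M_\amp\exp(\gamma)$ yields $\|T_1\|_{L^\infty(B_L^X\times B_0^Y)}\le M^2\Lambda_m^{3d}((1+C_1\widehat q^m)^L-1)M_\amp\exp(\gamma)$. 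The term $T_2$ is bounded analogously, now also using the stability bound \eqref{eq:thm:nested-interpolation-25} for the already-applied $x$-process. The main obstacle — and the place where the improvement $m\gtrsim\log L$ over $m\gtrsim L\log\log L$ comes from — is converting the factor $(1+C_1\widehat q^m)^L-1$ into something exponentially small in $m$: one observes that $C_1\le C\,(m+1)^{(\lambda+1)d}\exp(\gamma((\rho+1/\rho)/2+1))$ grows only polynomially in $m$ by \eqref{eq:polynomial-growth-Lambda}, so for $m\ge b'\log(L+2)$ with $b'$ large enough we get $L\,C_1\widehat q^m\le \tfrac12$, whence $(1+C_1\widehat q^m)^L-1\le 2LC_1\widehat q^m\le C\,(m+1)^{(\lambda+1)d+1}\,\widehat q^m\log(\cdot)$, which is $\le C\exp(-b m)$ for a suitable $b\in(0,-\log\widehat q)$ after absorbing the polynomial prefactors (and likewise the $M^2\Lambda_m^{3d}$ factors and the $\rho_\amp^{-m}$ from $T_0$, all of which are dominated by $\exp(-bm)$ for a slightly smaller $b$). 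Collecting the three estimates gives $\|k-k^{BF}\|_{L^\infty(B_L^X\times B_L^Y)}\le C\exp(-bm)M_\amp$ with $C,b,b'$ depending only on $\gamma,\rho_\amp,\rho_\phase,d,C_\Lambda,\lambda$, as claimed.
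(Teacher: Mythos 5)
Your proposal is correct and follows essentially the same route as the paper: the same three-term decomposition into the middle-level interpolation error plus the two one-sided re-interpolation contributions, the same use of Theorem~\ref{thm:nested-interpolation} on each factor, and the same observation that under $m\gtrsim\log L$ the geometric factor $(1+C_1\widehat q^m)^L-1$ is exponentially small in $m$. The only (inconsequential) deviation is in bounding $T_1,T_2$: you expand $H$ into its $M^2$ separable terms and apply Theorem~\ref{thm:nested-interpolation} term by term, which costs an extra polynomial-in-$m$ prefactor, whereas the paper observes that $H(\cdot,y)=E_{y_0}\pi_y$ with $\pi_y\in\mathcal{Q}_m$ for each fixed $y$ and applies the theorem directly to $H$, getting the bound $(1+(1+C_1\widehat q^m)^L)\bigl((1+C_1\widehat q^m)^L-1\bigr)\|H\|_{L^\infty(B_0^X\times B_0^Y)}$ with no such prefactor — either way the prefactor is absorbed by $\widehat q^m$.
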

\begin{proof}
%
\emph{Step 1:} We may assume $\rho_\phase \ge \rho_\amp$. 
It is convenient to abbreviate
\begin{equation*}
\widehat{\mathfrak I}^x_\ell:= {\mathfrak I}^{B^X_\ell,x}_{y_{-\ell}}, 
\qquad 
\widehat{\mathfrak I}^y_\ell:= {\mathfrak I}^{B^Y_\ell,y}_{x_{-\ell}}. 
\end{equation*}
We note 
\begin{align}
\label{eq:H}
H &:= 
(\widehat{\mathfrak I}^x_0 \otimes \widehat{\mathfrak I}^y_0)[ k](x,y) \\
\nonumber 
&= \exp(\bi \kappa \phase(x,y_0)) \exp(\bi \kappa \phase(x_0,y))  
I^{B^X_0}_m \otimes I^{B^Y_0}_m \underbrace{ \left[\exp(\bi \kappa (R_{x_0,y_0}(x,y) - \phase(x_0,y_0))) \amp(x,y)\right]}_{=: F(x,y)}, 
\end{align}
where the function $R_{x_0,y_0}$ is defined in Lemma~\ref{lemma:G}. Using the representation of $R_{x_0,y_0}$ given there, 
we write 
\begin{equation*}
F(x,y)=  \exp(- \bi \kappa \phase(x_0,y_0)) \amp(x,y) \exp(\bi \kappa ( (x-x_0)^\top G(x,y) (y - y_0)),  
\end{equation*}
where the function $G$ is holomorphic on the domain $\Omega$. We estimate 

\begin{align*}
\sup_{(x,y) \in \ellipse^{B^X_0} _{\rho_\amp} \times {B^Y_0}} \kappa | (x - x_0)^\top G(x,y) (y - y_0)| & \leq 
\kappa \operatorname*{diam} B^X_0 \frac{\rho_\amp + 1/\rho_\amp}{2} 
\|G\|_{L^\infty(\ellipse^{B^X_0}_{\rho_\amp} \times B^Y_0)} 
\operatorname*{diam} B^Y_0  
 \stackrel{(\ref{eq:thm:main-butterfly-condition-c})} {\leq }
\gamma \frac{\rho_\amp+1/\rho_\amp}{2},  
\end{align*}
and get with analogous arguments 
\begin{align*}
\sup_{(x,y) \in {B^X_0}  \times \ellipse^{B^Y_0}_{\rho_\amp}} \kappa | (x - x_0)^\top G(x,y) (y - y_0)| & \leq 
\gamma \frac{\rho_\amp+1/\rho_\amp}{2}.  
\end{align*}
Lemma~\ref{lemma:polynomial-interpolation-error} in conjunction
with Lemma~\ref{lemma:G} implies together with the univariate polynomial approximation result 
that led to (\ref{eq:exponential-polynomial-approximation}) 
\begin{align}
\label{eq:F}
\|F - I^{B^X_0}_m \otimes I^{B^Y_0}_m F\|_{L^\infty(B^X_0 \times B^Y_0)} &\leq \frac{4d}{\rho_A-1} (1+\Lambda_m)^{2d} 
\exp(\gamma (\rho_\amp + 1/\rho_\amp)/2) M_\amp \rho_A^{-m}. 
\end{align}
Recall the definition of $H$ in (\ref{eq:H}). 
Since $\phase$ is real for real arguments, the estimate (\ref{eq:F}) yields 
\begin{align}
\label{eq:estimate-with-H} 
\|k - H\|_{L^\infty(B^X_0 \times B^Y_0)} &\leq \frac{4d}{\rho_A-1} (1+\Lambda_m)^{2d} 
\exp(\gamma (\rho_\amp + 1/\rho_\amp)/2) M_\amp \rho_A^{-m}. 
\end{align}
\emph{Step 2:} We quantify the effect of 
$(\widehat{\mathfrak I}^x_L \circ \cdots \circ \widehat{\mathfrak I}^x_1) \otimes (\widehat{\mathfrak I}^y_L \circ \cdots \circ \widehat{\mathfrak I}^y_1)$. 
The key is to observe that Theorem~\ref{thm:nested-interpolation} is applicable since this operator is applied to the function 
$H$, 
which is a tensor product of functions of the form suitable for an application of 
Theorem~\ref{thm:nested-interpolation}. We note with the constants $C_1$,
$\widehat q \in (0,1)$ of Theorem~\ref{thm:nested-interpolation}
\begin{align}
\nonumber 
& \|(\operatorname*{I} - 
(\widehat {\mathfrak I}^x_L \circ \cdots \circ \widehat{\mathfrak I}^x_1) 
\otimes (\widehat {\mathfrak I}^y_L \circ \cdots \circ \widehat{\mathfrak I}^y_1)) H\|_{L^\infty(B^X_L \times B^Y_L)}  \\
\nonumber 
& \leq  
\|(\operatorname*{I} - 
(\widehat {\mathfrak I}^x_L \circ \cdots \circ \widehat{\mathfrak I}^x_1)
\otimes \operatorname*{I}) H \|_{L^\infty(B^X_L \times B^Y_L)}
+ 
\|
(\widehat {\mathfrak I}^x_L \circ \cdots \circ \widehat{\mathfrak I}^x_1)\otimes 
\left( \operatorname*{I} - 
(\widehat {\mathfrak I}^y_L \circ \cdots \circ \widehat{\mathfrak I}^y_1)\right) H\|_{L^\infty(B^X_L \times B^Y_L)}  \\
\nonumber 
&\leq (1 + (1 + C_1 \widehat q^m)^L) \left( (1 + C_1 \widehat q^m)^L - 1\right) \|H\|_{L^\infty(B^X_0 \times B^Y_0)}  \\ 
\label{eq:foo}
&\leq \underbrace{(1 + (1 + C_1 \widehat q^m)^L)}_{=:C_{m,L}}  \underbrace{ \left( (1 + C_1 \widehat q^m)^L - 1\right) }_{=:\widehat \varepsilon_{m,L}} 
\left(\|k\|_{L^\infty(B^X_0 \times B^Y_0)} + \|k - H\|_{L^\infty(B^X_0 \times B^Y_0)}
\right).
\end{align}
We get, noting the trivial bound $\|k\|_{L^\infty(B^X_0 \times B^Y_0)} \leq M_\amp$,  
\begin{align} 
\nonumber 
& \|k - \left( (\widehat {\mathfrak I}^x_L \circ \cdots \circ \widehat{\mathfrak I}^x_1) 
\otimes (\widehat {\mathfrak I}^y_L \circ \cdots \circ \widehat{\mathfrak I}^y_1) \right)(\widehat {\mathfrak I}^x_0 \otimes \widehat {\mathfrak I}^y_0) k
\|_{L^\infty(B^X_L \times B^X_L)} \\
\nonumber 
& 
\leq \|k - H\|_{L^\infty(B^X_L \times B^Y_L)} + \|\left(\operatorname*{I} - 
(\widehat {\mathfrak I}^x_L \circ \cdots \circ \widehat{\mathfrak I}^x_1) 
\otimes (\widehat {\mathfrak I}^y_L \circ \cdots \circ \widehat{\mathfrak I}^y_1) \right)H \|_{L^\infty(B^X_L \times B^Y_L)} \\ 
\nonumber 
& \stackrel{(\ref{eq:foo})}{\leq }
\|k - H\|_{L^\infty(B^X_0 \times B^Y_0)} + 
C_{m,L} \widehat \varepsilon_{m,L}  \left( M_\amp + \|k - H \|_{L^\infty(B^X_0 \times B^Y_0)}\right)\\
\label{eq:thm:main-1000}
& 
\stackrel{(\ref{eq:estimate-with-H})}{\leq}
\frac{4d}{\rho_\amp-  1} (1 + \Lambda_m)^{2d} \exp(\gamma (\rho_A + 1/\rho_A)/2) M_\amp \rho_\amp^{-m} 
\left(1 + C_{m,L} \widehat \varepsilon_{m,L}\right) 
+ 
C_{m,L} \widehat \varepsilon_{m,L}  M_\amp. 
\end{align}
\emph{Step~3:}
(\ref{eq:thm:main-1000}) is valid for arbitrary $m$ and $L$.  We simplify 
(\ref{eq:thm:main-1000}) by making a further assumption on the relation between $m$ and $L$:
The assumption (\ref{eq:polynomial-growth-Lambda}) on $\Lambda_m$ implies that for any 
chosen $\widetilde q \in (\widehat q,1)$ we have for sufficiently large $m$
\begin{equation*}
  (1 + \Lambda_m)^d \widehat q^m
  \leq (1 + C_\Lambda (m+1)^\lambda)^d \widehat q^m
  \leq \widetilde q^m.
\end{equation*}
Hence, we obtain for a suitable constant $C > 0$ that is independent of $m$
\begin{equation*}
  \widehat \varepsilon_{m,L} = (1 +C_1 \widehat q^m)^L-1 
  \leq (1 + C\widetilde q^m)^{L}-1 
  \stackrel{1+x \leq e^x}{\leq} \exp(C \widetilde q^m L) - 1. 
\end{equation*}
Using the estimate $\exp(x) - 1 \leq e x$, which is valid for $x \in [0,1]$,
and assuming that $C \widetilde q^m L \leq 1$ (note that this holds for
$m \ge K \log (L+2)$  for sufficiently large $K$), we obtain
\begin{align*}
\widehat \varepsilon_{m,L} \leq C e \widetilde q^m L = 
     C e \exp( m \ln (\widetilde q) +  \ln L) 
     \leq C e \exp( m \ln (\widetilde q) + m/K) \leq C' \exp (-b m), 
\end{align*}
where $b > 0$ if we assume that $K$ is selected sufficiently large. Inserting
this estimate in (\ref{eq:thm:main-1000}) and noting that $C_{m,L} = 2+ \widehat \varepsilon_{m,L}$ 
allows us to conclude the proof. 
\end{proof}
\section{Application: the 3D Helmholtz kernel}
\label{sec:helmholtz}
The case of the 3D Helmholtz kernel
\begin{equation}
\label{eq:helmholtz-kernel}
k_{Helm}(x,y) = \frac{\exp(\bi \kappa \|x - y\|)}{4 \pi  \|x - y\|}
\end{equation}
corresponds to the phase function $\phase(x,y) = \|x - y\|$ and the 
amplitude function $\amp(x,y) = 1/(4\pi\|x - y\|)$. We illustrate the 
butterfly representation for a Galerkin discretization of the 
single layer operator, i.e., 
\begin{equation*}
  \varphi \mapsto (V \varphi)(x)
  := \int_{y \in \Gamma} k_{Helm}(x,y) \varphi(y) \,dy,
\end{equation*}
where $\Gamma$ is a bounded surface in $\BbbR^3$. Given a family
of shape functions $(\varphi_i)_{i=1}^N$, the stiffness matrix ${\mathbf K}$
is given by 
\begin{equation}
\label{eq:stiffness-matrix-helmholtz}
{\mathbf K}_{i,j} = \int_{x \in\Gamma} \int_{y \in \Gamma} k_{Helm}(x,y)
        \varphi_j(y)\varphi_i(x) \,dy\,dx. 
\end{equation}
We place ourselves in the setting of Section~\ref{sec:butterfly-asymptotically-smooth-kernels}
with $\Idx = \Idy = \{1,\ldots,N\}$. 

%
%
\begin{theorem}
\label{thm:helmholtz}
Assume the setting of Section~\ref{sec:butterfly-asymptotically-smooth-kernels} and let 
Assumption~\ref{assumption:model-situation} be valid. 
Then there are constants $C$, $b$, $b'>0$ that depend solely on the 
admissibility parameters $\eta_1$, $\eta_2$,  and 
the parameter $q \in (0,1)$ of Assumption~\ref{assumption:model-situation} 
such that for the stiffness matrix 
${\mathbf K} \in \BbbC^{\Idx \times \Idx} $ given by (\ref{eq:stiffness-matrix-helmholtz}) and its approximation 
$\widetilde {\mathbf K} \in \BbbC^{\Idx \times \Idx}$ that is obtained by the butterfly representation 
as described in Section~\ref{sec:butterfly-asymptotically-smooth-kernels} 
the following holds: If $m \ge b' \log (2 + \depth{\mathcal T}_\Idx)$ then 
\begin{align*}
\sup_{(i,j) \in \widehat \sigma\times \widehat \tau} |{\mathbf K}_{i,j} - \widetilde{\mathbf K}_{i,j}| 
\leq  C \frac{\|\varphi_i\|_{L^1(\Gamma)} \|\varphi_j\|_{L^1(\Gamma)}}{\operatorname*{dist}(B_{\widehat \sigma},B_{\widehat \tau})}
\begin{cases}
\exp(-b m) & \mbox{ if $(\widehat \sigma,\widehat \tau) \in \lfsaII$} \\
0  & \mbox{ if $(\widehat \sigma,\widehat \tau) \in \lfsiII$}.  
\end{cases}
\end{align*}
\end{theorem}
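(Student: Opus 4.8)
The plan is to deduce Theorem~\ref{thm:helmholtz} from Theorem~\ref{thm:main}, applied separately on every admissible standard block. For an inadmissible block $(\widehat\sigma,\widehat\tau)\in\lfsiII$ nothing has to be shown: by construction the corresponding submatrix is stored exactly, i.e.\ $\widetilde{\mathbf K}|_{\widehat\sigma\times\widehat\tau}=\mathbf K|_{\widehat\sigma\times\widehat\tau}$, so the error vanishes. So fix an admissible block $(\widehat\sigma,\widehat\tau)\in\lfsaII$, say on level $\ell$, put $\delta:=\dist(B_{\widehat\sigma},B_{\widehat\tau})>0$, and pick any pair of leaf clusters inside it, together with their unique cluster sequences $(\sigma_{-L_\ell},\dots,\sigma_0,\dots,\sigma_{L_\ell})$ and $(\tau_{-L_\ell},\dots,\tau_0,\dots,\tau_{L_\ell})$ through the middle level $L_\ell^\text{middle}$. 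By the level computation in the remark of Section~\ref{sec:butterfly-asymptotically-smooth-kernels}, all boxes $B_{\sigma_j}$ lie inside $B_{\widehat\sigma}$ and all $B_{\tau_j}$ inside $B_{\widehat\tau}$; in particular each $B_{\sigma_j}$ is separated from each $B_{\tau_k}$ by at least $\delta$. As explained in Sections~\ref{sec:heart-of-the-matter} and \ref{sec:butterfly-on-matrix-level}, the restriction of $\widetilde{\mathbf K}$ to that leaf--leaf block is obtained by testing the kernel $k^{BF}$ of Algorithm~\ref{alg:butterfly-analysis}, built from exactly these box and point sequences, against the basis functions; hence it suffices to bound $\|k_{Helm}-k^{BF}\|_{L^\infty}$ on that block by Theorem~\ref{thm:main} with $d=3$ and $L=L_\ell$, and for this we must verify its four groups of hypotheses.

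The Helmholtz-specific point is the analyticity of $\phase$ and $\amp$. By Assumption~\ref{assumption:model-situation}(1) we have $\max(\diam B_{\widehat\sigma},\diam B_{\widehat\tau})\le\eta_1\delta$. I will choose $\rho_\phase=\rho_\amp=1+c_0/\eta_1$ with a small absolute constant $c_0$, and take $\Omega$ to be the set of $(z,w)\in\BbbC^6$ with $\dist(\operatorname*{Re}z,B_{\widehat\sigma})<c_0\delta$, $|\operatorname*{Im}z|<c_0\delta$ and symmetrically for $w$; then the polydiscs $\ellipse^{[\bs a,\bs b]}_{\rho_\phase}$ over any sub-box of $B_{\widehat\sigma}$ (respectively $B_{\widehat\tau}$) sit inside $\Omega$, because their imaginary extent is $\le\tfrac{\diam B_{\widehat\sigma}}{2}\cdot\tfrac{\rho_\phase-1/\rho_\phase}{2}=O(c_0\delta)$ and their real overhang is only $O(c_0^2\delta/\eta_1)$. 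On $\Omega$ the coordinate differences satisfy $\|\operatorname*{Re}z-\operatorname*{Re}w\|\ge(1-2c_0)\delta$ and $|\operatorname*{Im}(z_i-w_i)|\le 2c_0\delta$, so $\sum_{i=1}^3(z_i-w_i)^2$ has positive real part $\ge\delta^2(\tfrac14-12c_0^2)$, i.e.\ it stays in a cone about the positive real axis, away from the branch cut $(-\infty,0]$ of the square root, with modulus between $c_1\delta^2$ and $C_1(1+\eta_1)^2\delta^2$. Hence $\phase(x,y)=\|x-y\|$ (principal branch) is holomorphic on $\Omega$ with $M_\phase\le C(1+\eta_1)\delta$, and $\amp(x,y)=1/(4\pi\|x-y\|)$ is holomorphic on $\Omega$ with $M_\amp\le C\delta^{-1}$; since all polydiscs occurring in the hypotheses of Theorem~\ref{thm:main}, including those in (\ref{eq:thm:main-135}), have at least $c'\delta$ of room inside $\Omega$, we get $d_\Omega\ge c'\delta$ and therefore $M_\phase/d_\Omega^2\le C_{\eta_1}\delta^{-1}$. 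This quantitative complexification of the Euclidean distance for well-separated boxes (already used, in the directional $\mathcal H^2$ setting, in \cite{boerm-melenk15}) is the step requiring the most care: the point is to make all constants depend on $\eta_1$ alone, which forces the scaling $\rho_\phase-1\sim 1/\eta_1$.

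It remains to check the other three groups of hypotheses. The shrinking condition (\ref{eq:thm:main-shrinking}) for $(B_{\sigma_j})$ and $(B_{\tau_j})$ is precisely Assumption~\ref{assumption:model-situation}(3), i.e.\ (\ref{eq:q}), with $\overline q=q$. The butterfly conditions (\ref{eq:thm:main-butterfly-condition}) follow by combining $M_\phase/d_\Omega^2\le C_{\eta_1}\delta^{-1}$ with Assumption~\ref{assumption:model-situation}(2): the cluster levels entering the $x$- and $y$-re-interpolation chains lie in the band for which (\ref{eq:eta_2}) is postulated, so $\kappa\,\diam B_{\sigma_j}\,\diam B_{\tau_k}\le\eta_2\delta$ for all the relevant pairs (the one-level offset between a cluster and its father being absorbed into a fixed constant, so that $\|y_{\tau_{-j}}-y_{\tau_{-j-1}}\|\le\diam B_{\tau_{-j-1}}$ is still controlled), whence $\kappa\,\diam B_{\sigma_j}\,\|y_{\tau_{-j}}-y_{\tau_{-j-1}}\|\,M_\phase/d_\Omega^2\le C(\eta_1,q)\,\eta_2=:\gamma$ and, symmetrically, the analogous bounds for the $y$-chain and for (\ref{eq:thm:main-butterfly-condition-c}). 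Finally, (\ref{eq:polynomial-growth-Lambda}) holds for Chebyshev interpolation with absolute $C_\Lambda,\lambda$. Theorem~\ref{thm:main} then gives, under $m\ge b'\log(2+\depth\mathcal T_\Idx)$ (which implies the required $m\ge b'\log(L_\ell+2)$ since $L_\ell\le\depth\mathcal T_\Idx$), the estimate $\|k_{Helm}-k^{BF}\|_{L^\infty}\le C\exp(-bm)M_\amp\le C'\exp(-bm)\delta^{-1}$ on the chosen leaf--leaf block, with $C,b,b'$ depending only on $\gamma,\rho_\amp,\rho_\phase,d,C_\Lambda,\lambda$ and hence, by the above, only on $\eta_1,\eta_2,q$. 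Testing against $\varphi_i,\varphi_j$ and using that $\operatorname*{supp}\varphi_i$, $\operatorname*{supp}\varphi_j$ are contained in the respective leaf boxes (see (\ref{eq:bounding-box})) yields
\[
|\mathbf K_{i,j}-\widetilde{\mathbf K}_{i,j}|\le\|k_{Helm}-k^{BF}\|_{L^\infty}\,\|\varphi_i\|_{L^1(\Gamma)}\,\|\varphi_j\|_{L^1(\Gamma)}\le C'\exp(-bm)\,\frac{\|\varphi_i\|_{L^1(\Gamma)}\,\|\varphi_j\|_{L^1(\Gamma)}}{\delta};
\]
since the leaf boxes are separated by at least $\delta=\dist(B_{\widehat\sigma},B_{\widehat\tau})$, and the leaf pair was arbitrary, taking the supremum over $(i,j)\in\widehat\sigma\times\widehat\tau$ completes the admissible case. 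The main obstacle is the complexification of the second paragraph; the rest is routine, the only delicate point there being to keep the cluster levels inside the band covered by (\ref{eq:eta_2}).
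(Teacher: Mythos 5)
Your proof is correct and follows the same route as the paper: reduce to Theorem~\ref{thm:main} block by block over $\lfsaII$, with the technical core being the holomorphic extension of $(x,y)\mapsto\|x-y\|$ to a complex neighbourhood of well-separated real boxes. The only stylistic difference is that you reprove the content of Lemmas~\ref{lemma:holomorphic-extension-euclidean-norm} and \ref{lemma:admissibility-complex-norm} in-line (choosing $\rho=1+c_0/\eta_1$ and tracking real/imaginary parts directly), whereas the paper packages this as a separate lemma; the substance, the $\eta_1$-only dependence of $\rho$ and $\varepsilon$, and the resulting bounds $M_\phase\lesssim\delta$, $M_\amp\lesssim\delta^{-1}$, $d_\Omega\gtrsim\delta$ are identical.
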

\begin{proof}
We apply Theorem~\ref{thm:main} for blocks $(\widehat \sigma,\widehat\tau) \in \lfsaII$. 
To that end, we note that Lemma~\ref{lemma:admissibility-complex-norm} gives
us the existence
of $\varepsilon > 0$ and $\rho> 1$ (depending only on the admissibility parameter $\eta_1$) such 
that phase function $(x,y) \mapsto \phase(x,y) = \|x - y\|$ is holomorphic on 
\begin{equation*}
  \Omega := \bigcup \{
    B_{\varepsilon \delta_{\widehat{\sigma}\widehat{\tau}}}(x) \times
    B_{\varepsilon \delta_{\widehat{\sigma}\widehat{\tau}}}(y) \,|\, 
   (x,y) \in \ellipse^{B_{\widehat \sigma}}_\rho\times\ellipse^{B_{\widehat \tau}}_\rho\},
   \qquad
   \delta_{\widehat{\sigma}\widehat{\tau}}
   := \operatorname*{dist} (B_{\widehat \sigma}, B_{\widehat \tau}),
\end{equation*}
and satisfies
\begin{align*}
  \sup_{(x,y) \in \Omega} |\phase(x,y)|
  &\leq C \delta_{\widehat{\sigma}\widehat{\tau}}, &
  \inf_{(x,y) \in \Omega} |\phase(x,y)|
  &\geq C^{-1} \delta_{\widehat{\sigma}\widehat{\tau}}.
\end{align*}
Hence, the constants $M_\phase$, $M_\amp$, and $d_\Omega$, $\rho_\amp$, $\rho_\phase$ 
appearing in Theorem~\ref{thm:main} can be bounded by 
\begin{align*}
  M_\phase
  &\lesssim \delta_{\widehat{\sigma}\widehat{\tau}}, &
  M_\amp
  &\lesssim 1/\delta_{\widehat{\sigma}\widehat{\tau}}, &
  d_\Omega
  &\gtrsim \delta_{\widehat{\sigma}\widehat{\tau}}, &
  \rho_\amp
  &= \rho_\phase = \rho.
\end{align*}
We observe 
\begin{equation*}
  \frac{M_\phase}{d_\Omega^2}
  \lesssim \frac{1}{\delta_{\widehat{\sigma}\widehat{\tau}}}
\end{equation*}
so that the conditions (\ref{eq:thm:main-butterfly-condition}) of Theorem~\ref{thm:main}  are 
satisfied in view of our Assumption in (\ref{eq:eta_2}). The result now follows from Theorem~\ref{thm:main}. 
\end{proof}
We conclude this section with a proof of the fact that the Euclidean norm
admits a holomorphic extension.

%
%
\begin{lemma}
\label{lemma:holomorphic-extension-euclidean-norm}
Let $\omega \subset \BbbR^d$ be open. Define the set 
\begin{equation}
\label{eq:cone}
{\mathcal C}_\omega:= \bigcup_{x \in \omega} B_{(\sqrt{2} - 1) |x|}(x) \subset \BbbC^d .
\end{equation}
Then the function 

\begin{equation*}
  \enorm \colon \omega \to \BbbC,\qquad
         x \mapsto \sqrt{\sum_{i=1}^d x_i^2}, 
\end{equation*}

has an analytic extension to ${\mathcal C}_\omega$. 
Furthermore, 
\begin{equation}
\label{eq:lemma:holomorphic-extension-euclidean-norm-100}
\sqrt{\left| \operatorname*{Re} \sum_{i=1}^d z_i^2\right|} 
\leq 
\sqrt{\left|\sum_{i=1}^d z_i^2\right|}  = |\enorm(z_1,\ldots,z_d) | = 
\sqrt{\left|\sum_{i=1}^d z_i^2\right|} \leq \sqrt{\sum_{i=1}^d |z_i|^2}.
\end{equation}
\end{lemma}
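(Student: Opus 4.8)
The plan is to exhibit $\enorm$ as the composition $z \mapsto \sqrt{\sum_{i=1}^d z_i^2}$, where $\sqrt{\cdot}$ denotes the principal branch of the square root, holomorphic on the slit plane $\BbbC \setminus (-\infty,0]$. Since $z \mapsto \sum_{i=1}^d z_i^2$ is a polynomial, hence entire, everything reduces to checking that this inner map sends $\mathcal{C}_\omega$ into the domain of the principal square root. I would in fact establish the stronger assertion $\operatorname{Re}\sum_{i=1}^d z_i^2 > 0$ for all $z \in \mathcal{C}_\omega$.

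To this end, fix $z \in B_{(\sqrt2-1)|x|}(x)$ with $x \in \omega$ and write $z = u + \bi v$ with $u,v \in \BbbR^d$. Then $\|u-x\|^2 + \|v\|^2 = \|z-x\|^2 < (\sqrt2-1)^2|x|^2$, so that $\|v\| < (\sqrt2-1)|x|$ and $\|u\| \ge |x| - \|u-x\| > (2-\sqrt2)|x|$. Since $\operatorname{Re}\sum_{i=1}^d z_i^2 = \|u\|^2 - \|v\|^2$, the elementary identities $(2-\sqrt2)^2 = 6 - 4\sqrt2$ and $(\sqrt2-1)^2 = 3 - 2\sqrt2$ yield $\operatorname{Re}\sum_{i=1}^d z_i^2 > (3 - 2\sqrt2)|x|^2 > 0$. (The bound $\|u-x\| + \|v\| \le \sqrt2\,\|z-x\| < (2-\sqrt2)|x| < |x|$ likewise shows that $0$ lies in none of the defining balls, i.e.\ $0 \notin \mathcal{C}_\omega$, so that $\mathcal{C}_\omega$ stays clear of the one point where $\sqrt{\cdot}$ would fail to be holomorphic even after imposing $\operatorname{Re}>0$.) Hence $z \mapsto \sqrt{\sum_{i=1}^d z_i^2}$ is holomorphic on $\mathcal{C}_\omega$, and for real arguments $z = x$ it equals $\sqrt{|x|^2} = |x| = \enorm(x)$, so it is an analytic extension of $\enorm$.

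The chain (\ref{eq:lemma:holomorphic-extension-euclidean-norm-100}) is then purely formal. With $w := \sum_{i=1}^d z_i^2$, the two middle equalities are the standard relation $|\enorm(z)| = |\sqrt{w}| = \sqrt{|w|}$; the leftmost inequality follows from $|\operatorname{Re} w| \le |w|$ upon taking square roots; and the rightmost one from the triangle inequality $|w| = \bigl|\sum_i z_i^2\bigr| \le \sum_i |z_i^2| = \sum_i |z_i|^2$, again upon taking square roots.

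I do not expect a genuine obstacle here; the statement becomes routine once one adopts the viewpoint of composing with the principal square root. The one point that needs care is the constant bookkeeping — verifying that the radius $(\sqrt2-1)|x|$ is small enough to force $\|u\|^2 > \|v\|^2$ (equivalently $\|u-x\| + \|v\| < |x|$) and thereby keep $\sum_{i=1}^d z_i^2$ in the right half-plane, where the principal branch of the square root is single-valued and holomorphic.
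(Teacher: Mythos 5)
Your proof is correct, and it takes a cleaner route than the paper's at two points. First, for the analyticity you observe directly that $\enorm = \sqrt{\punkt}\circ p$ with $p(z) = \sum_{i=1}^d z_i^2$ an entire polynomial map $\BbbC^d\to\BbbC$ and $\sqrt{\punkt}$ the principal branch, holomorphic wherever $\operatorname{Re}>0$; the composition is then automatically holomorphic as a map of several complex variables. The paper instead establishes separate analyticity in each variable and then invokes Hartogs' theorem, which is a much heavier tool than the situation requires — your composition argument renders Hartogs superfluous here. Second, for the key estimate $\operatorname{Re}\sum_i z_i^2 > 0$ on $\mathcal{C}_\omega$, you split $z = u+\bi v$ and use $\operatorname{Re}\sum_i z_i^2 = \|u\|^2 - \|v\|^2$ together with the crude bounds $\|u\| > (2-\sqrt2)|x|$ and $\|v\| < (\sqrt2-1)|x|$, arriving at the quantitative lower bound $(3-2\sqrt2)|x|^2$; the paper reaches the same conclusion (a strict inequality $>0$) via Young's inequality applied to the cross terms in $\sum_i(x_i+\zeta_i)^2$, which with the critical choice $\delta=\sqrt2-1$ makes the lower bound degenerate exactly to $0$. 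Your version is more transparent and even yields a uniform positive margin. One small nit: your parenthetical about $0\notin\mathcal{C}_\omega$ is not what one would need to protect the square root — the relevant point would be $\sum_i z_i^2\neq 0$, not $z\neq 0$ — but this is moot since $\operatorname{Re}\sum_i z_i^2 > 0$ already forces $\sum_i z_i^2\neq 0$, so the aside is harmless and can simply be dropped. The final chain (\ref{eq:lemma:holomorphic-extension-euclidean-norm-100}) you handle exactly as the paper does.
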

\begin{proof} 
The assertion of analyticity will follow from Hartogs' theorem (cf., e.g., \cite[Thm.~{2.2.8}]{hoermander90}), 
which states that a function that is analytic in each variable separately is in fact analytic. 
In order to apply Hartogs' theorem, we ascertain that ${\mathcal C}_\omega$ is chosen in such a way that 
\begin{equation*}
  \operatorname*{Re} \sum_{i=1}^d z_i^2 > 0 
  \qquad \text{ for all } (z_1,\ldots,z_d) \in {\mathcal C}_\omega.
\end{equation*}
To see this, abbreviate $D:= \sqrt{2} - 1$ and 
write $(z_1,\ldots,z_d) \in {\mathcal C}_\omega$ in the form $z_i = x_i + \zeta_i$ with $x \in \omega$ 
and $\zeta_i \in \BbbC$ with $\sum_{i=1}^d|\zeta_i|^2 < D^2 |x|^2$. Then, with Young's inequality with 
$\delta := D = \sqrt{2}-1$:
\begin{align*}
\operatorname*{Re} \sum_{i=1}^d z_i^2 &= 
\operatorname*{Re} \sum_{i=1}^d (x_i+\zeta_i) ^2 \ge 
\sum_{i=1}^d \left(x_i^2 - 2 |x_i| |\zeta_i| - |\zeta_i|^2\right)
\ge 
\|x\|^2 - \delta \|x\|^2 - \delta^{-1} \|\zeta\|^2 - \|\zeta\|^2\\
&> (1 - \delta - D^2/\delta - D^2)|x|^2  = 
(1 - 2 D - D^2)\|x\|^2 \stackrel{D = \sqrt{2}-1}{=} 0.  
\end{align*}
Since the square root function is well-defined on the right half plane 
$\{z \in \BbbC\,|\, \operatorname*{Re} z > 0\}$, the function $\enorm$ is naturally defined on ${\mathcal C}_\omega$ and 
analytic in each variable separately. 
The equalities $|\enorm(z_1,\ldots,z_d)| = \sqrt{|\sum_{i=1}^d z_i^2|}$ in 
(\ref{eq:lemma:holomorphic-extension-euclidean-norm-100}) follow from the equation
$|\sqrt{z}| = \sqrt{|z|}$ for $z \in \BbbC$ with $\operatorname*{Re} z > 0$, 
and the two inequalities in (\ref{eq:lemma:holomorphic-extension-euclidean-norm-100}) are straightforward. 
\end{proof}

%
%
\begin{lemma}
\label{lemma:admissibility-complex-norm}
Let $\eta > 0$. Then there exist $\varepsilon>0$ and  $\rho > 1$ depending solely on $\eta$ and the spatial dimension $d$ 
such that the following is true for any  
${[\bs{a}^x,\bs{b}^x]}$ and ${[\bs{a}^y,\bs{b}^y]}$ satisfying the admissibility condition 
\begin{equation}
\label{eq:lemma:admissibility-complex-norm-10}
\eta \operatorname*{dist}({[\bs{a}^x,\bs{b}^x]}, {[\bs{a}^y,\bs{b}^y]}) \ge 
\max\{\operatorname*{diam}( {[\bs{a}^x,\bs{b}^x]}), \operatorname*{diam} ({[\bs{a}^y,\bs{b}^y]}) \}. 
\end{equation}
Set $\delta_B:= \operatorname*{dist}({[\bs{a}^x,\bs{b}^x]},{[\bs{a}^y,\bs{b}^y]})$
and define
\begin{equation*}
  \Omega := \bigcup \{B_{\varepsilon \delta_B}(x) \times
                      B_{\varepsilon \delta_B}(y)\,|\, 
      (x,y) \in \ellipse^{[\bs{a}^x,\bs{b}^x]}_\rho \times
                \ellipse^{[\bs{a}^y,\bs{b}^y]}_\rho\} \subset \BbbC^{2d}.
\end{equation*}
Then the function $(x,y) \mapsto \|x - y\|$ has an analytic extension $(x,y) \mapsto \enorm(x-y)$ on
$\Omega$, and this extension satisfies, 
for a constant $C > 0$ that also depends solely on $\eta$ and $d$, 
\begin{align}
\label{eq:lemma:admissibility-complex-norm-20}
\sup_{(x,y) \in \Omega} 
|\enorm(x-y)| & \leq C \operatorname*{dist}({[\bs{a}^x,\bs{b}^x]}, {[\bs{a}^y,\bs{b}^y]}),   \\
\label{eq:lemma:admissibility-complex-norm-30}
\inf_{(x,y) \in \Omega}
|\enorm(x-y)| & \ge C^{-1} \operatorname*{dist}({[\bs{a}^x,\bs{b}^x]}, {[\bs{a}^y,\bs{b}^y]}). 
\end{align}
\end{lemma}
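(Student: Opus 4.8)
The plan is to reduce everything to Lemma~\ref{lemma:holomorphic-extension-euclidean-norm}: the Euclidean norm has an analytic extension $\enorm$ to the cone $\mathcal{C}_\omega=\bigcup_{z\in\omega}B_{(\sqrt2-1)\|z\|}(z)$ attached to any open $\omega\subset\BbbR^d$, and since $(X,Y)\mapsto X-Y$ is $\BbbC$-linear (hence entire) $\BbbC^{2d}\to\BbbC^d$, it suffices to produce a fixed open $\omega\subset\BbbR^d$, depending only on the two boxes, with $\{X-Y:(X,Y)\in\Omega\}\subset\mathcal{C}_\omega$; then $(X,Y)\mapsto\enorm(X-Y)$ is analytic on $\Omega$ and, being the Euclidean norm on real arguments, extends $(x,y)\mapsto\|x-y\|$. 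The natural choice is $\omega:=\{z\in\BbbR^d:\dist(z,B^x-B^y)<\delta_B/2\}$, where $B^x-B^y:=\{x-y:x\in[\bs{a}^x,\bs{b}^x],\ y\in[\bs{a}^y,\bs{b}^y]\}$ is the (closed, box-shaped) Minkowski difference; then $0\notin\omega$ and $\delta_B/2\le\|z\|\le(3/2+2\eta)\delta_B$ for $z\in\omega$, using $\dist(0,B^x-B^y)=\delta_B$ together with the admissibility hypothesis.

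The first technical ingredient is the elementary geometry of a Bernstein disc: a short computation, using that the Joukowski semi-axes $a=(\rho+1/\rho)/2$, $b=(\rho-1/\rho)/2$ satisfy $a^2-b^2=1$, shows $\dist(\zeta,[-1,1])\le b=(\rho-1/\rho)/2$ for every $\zeta\in\ellipse_\rho$. Rescaling to a generic interval and using that the Euclidean diameter of a box $B=[\bs{a},\bs{b}]$ is the length of its diagonal, $\diam B=(\sum_i(b_i-a_i)^2)^{1/2}$, this gives: for any $\xi\in\ellipse^{[\bs{a},\bs{b}]}_\rho$ the componentwise nearest point $P_B(\xi)\in B$ satisfies $\|\xi-P_B(\xi)\|\le\tfrac14(\rho-1/\rho)\diam B$. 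Hence any $(X,Y)\in\Omega$, say with $X\in B_{\varepsilon\delta_B}(\xi)$ and $Y\in B_{\varepsilon\delta_B}(\xi')$ for $\xi$ in the $x$-polydisc and $\xi'$ in the $y$-polydisc, can be written $X=x+\zeta_x$, $Y=y+\zeta_y$ with $x:=P_{[\bs{a}^x,\bs{b}^x]}(\xi)$, $y:=P_{[\bs{a}^y,\bs{b}^y]}(\xi')$ real and, by the triangle inequality,
\[
\|\zeta_x\|\le \tfrac14(\rho-1/\rho)\diam([\bs{a}^x,\bs{b}^x])+\varepsilon\delta_B,\qquad
\|\zeta_y\|\le \tfrac14(\rho-1/\rho)\diam([\bs{a}^y,\bs{b}^y])+\varepsilon\delta_B .
\]
Invoking (\ref{eq:lemma:admissibility-complex-norm-10}) in the form $\max\{\diam([\bs{a}^x,\bs{b}^x]),\diam([\bs{a}^y,\bs{b}^y])\}\le\eta\delta_B$ yields $\|\zeta_x-\zeta_y\|\le\mu\delta_B$ with $\mu:=\tfrac{\eta}{2}(\rho-1/\rho)+2\varepsilon$.

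Now choose $\rho>1$ and $\varepsilon>0$ (depending only on $\eta$) small enough that $\mu<\sqrt2-1$. For $(X,Y)\in\Omega$ set $z:=x-y\in B^x-B^y\subset\omega$; since $\|z\|\ge\dist([\bs{a}^x,\bs{b}^x],[\bs{a}^y,\bs{b}^y])=\delta_B$ we get $\|X-Y-z\|=\|\zeta_x-\zeta_y\|\le\mu\delta_B\le\mu\|z\|<(\sqrt2-1)\|z\|$, i.e.\ $X-Y\in B_{(\sqrt2-1)\|z\|}(z)\subset\mathcal{C}_\omega$, which establishes the analytic extension $(X,Y)\mapsto\enorm(X-Y)$ on $\Omega$. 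For the bounds I would write $X-Y=z+w$ with $w:=\zeta_x-\zeta_y$, $\|w\|\le\mu\|z\|$; since $z$ is real, $\sum_i(X_i-Y_i)^2=\|z\|^2+2\,z\cdot w+\sum_i w_i^2$, whence $(1-2\mu-\mu^2)\|z\|^2\le|\sum_i(X_i-Y_i)^2|\le(1+\mu)^2\|z\|^2$. Using the equality $|\enorm(z_1,\dots,z_d)|=\sqrt{|\sum_i z_i^2|}$ from (\ref{eq:lemma:holomorphic-extension-euclidean-norm-100}) and $\delta_B\le\|z\|\le(1+2\eta)\delta_B$ (the latter because $z\in B^x-B^y$), this gives (\ref{eq:lemma:admissibility-complex-norm-20}) and (\ref{eq:lemma:admissibility-complex-norm-30}) with $C:=\max\{(1+\mu)(1+2\eta),(2-(1+\mu)^2)^{-1/2}\}$, finite since $\mu<\sqrt2-1$, depending only on $\eta$ and $d$.

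I expect the only real difficulty to be bookkeeping rather than anything conceptual: one must control, simultaneously and uniformly, the two sources of complex ``fattening'' of the real boxes — the Bernstein polydiscs $\ellipse^{[\cdot]}_\rho$ and the $\varepsilon\delta_B$-balls in the definition of $\Omega$. This is exactly what the admissibility condition buys us: it forces $\diam([\bs{a}^x,\bs{b}^x]),\diam([\bs{a}^y,\bs{b}^y])\le\eta\delta_B\le\eta\|z\|$, so both perturbations are $O(\rho-1)\|z\|+O(\varepsilon)\|z\|$, and the cone condition $\|w\|<(\sqrt2-1)\|z\|$ is always attainable by taking $\rho$ close to $1$ and $\varepsilon$ small, the thresholds depending only on $\eta$ (and $d$).
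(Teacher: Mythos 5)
Your proof is correct and takes a genuinely different route from the paper's. Both reduce to Lemma~\ref{lemma:holomorphic-extension-euclidean-norm} and must exhibit an $\omega\subset\BbbR^d$ with $\{X-Y:(X,Y)\in\Omega\}\subset{\mathcal C}_\omega$, but the decompositions differ. The paper works with the \emph{real} and \emph{imaginary} parts of $z-\zeta$: it defines $\omega$ as the Minkowski difference $\operatorname{Re}\Omega_x-\operatorname{Re}\Omega_y$ of the real slices of the two fattened sets, shows $\operatorname{Re}\ellipse^{[\bs a,\bs b]}_\rho$ is a stretched box, bounds $\operatorname{dist}(\operatorname{Re}\Omega_x,\operatorname{Re}\Omega_y)$ from below, bounds the componentwise imaginary parts from above (picking up $\sqrt d$ factors from passing between $\max_i$ and Euclidean $\operatorname{diam}$), and checks the cone ratio with $\alpha=\operatorname{Re}(z-\zeta)$ as center and $i\operatorname{Im}(z-\zeta)$ as perturbation. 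You instead project the Bernstein-ellipse coordinates componentwise onto the \emph{original} real boxes $B^x,B^y$, so that the center $z=x-y$ lands directly in $B^x-B^y$ (giving $\|z\|\ge\delta_B$ for free, with no need to estimate how far the fattened set has drifted), and treat the entire discrepancy $w=\zeta_x-\zeta_y$ as a single complex remainder; the controlling estimate is the clean 1D fact $\operatorname{dist}(\zeta,[-1,1])\le(\rho-1/\rho)/2$ for $\zeta\in\ellipse_\rho$ (which follows from $a^2-b^2=1$, as you say). This observation — that ${\mathcal C}_\omega$ only needs the center to be real, not the perturbation to be purely imaginary — lets you avoid the real/imaginary bookkeeping and the $\sqrt d$ factor, and it gives cleaner explicit constants, e.g.\ $\mu=\tfrac{\eta}{2}(\rho-1/\rho)+2\varepsilon$ and $C=\max\{(1+\mu)(1+2\eta),(1-2\mu-\mu^2)^{-1/2}\}$. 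The two arguments are of course morally the same — both shrink $\rho\to1$ and $\varepsilon\to0$ until the complex fattening is below the $\sqrt2-1$ threshold — and the lower and upper bounds on $|\enorm(X-Y)|$ are derived in both cases from $|\enorm(\cdot)|=\sqrt{|\sum(\cdot)_i^2|}$ combined with the triangle inequality for the quadratic form.
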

\begin{proof} 
It is convenient to introduce the abbreviations 
\begin{gather*}
D:= \max\{\operatorname*{diam}( {[\bs{a}^x,\bs{b}^x]}),
          \operatorname*{diam} ({[\bs{a}^y,\bs{b}^y]}) \}, 
\\
\Omega_x := \bigcup \{ B_{\varepsilon \delta_B}(x) \,|\,
                       x \in \ellipse^{[\bs{a}^x,\bs{b}^x]}_\rho\},
\qquad
\Omega_y := \bigcup \{ B_{\varepsilon \delta_B}(y) \,|\,
                       y \in \ellipse^{[\bs{a}^y,\bs{b}^y]}_\rho\}.
\end{gather*}
We identify $\operatorname*{Re} \Omega_x$ and 
$\operatorname*{Re} \Omega_y$. 
We start by observing that
$\operatorname*{Re} {\ellipse}_{\bs{\rho}}
 = {\ellipse}_{\bs{\rho}} \cap \BbbR^d
 = B^{[\bs{a},\bs{b}]}$ with 
\begin{equation*}
a_i = - \frac{1}{2} \left(\bs{\rho}_i + \frac{1}{\bs{\rho}_i}\right), 
\quad 
b_i = \frac{1}{2} \left(\bs{\rho}_i + \frac{1}{\bs{\rho}_i}\right)
\qquad\text{ for all } i=1,\ldots,d. 
\end{equation*}
More generally, $\operatorname*{Re} {\ellipse}^{[\bs{a},\bs{b}]}_{\bs{\rho}} = 
{\ellipse}^{[\bs{a},\bs{b}]}_{\bs{\rho}}  \cap \BbbR^d$ 
is again a box obtained from the box 
${[\bs{a},\bs{b}]}$ by stretching the $i$-th direction by a factor
$1/2 (\bs{\rho}_i + 1/\bs{\rho}_i)$.
We now restrict to the case that $\bs{\rho}_i = \rho$ for all $i=1,\ldots,d$. 
We note that 
\begin{align}
\nonumber 
\operatorname*{dist}( \operatorname*{Re} \ellipse^{[\bs{a},\bs{b}]}_\rho,[\bs{a},\bs{b}])   & = 
\operatorname*{dist}( \ellipse^{[\bs{a},\bs{b}]}_\rho \cap {\mathbb R}^d,[\bs{a},\bs{b}])  
\leq 
\sqrt{d} \left(\frac{\rho+1/\rho}{2} - 1\right) \max_{i=1,\ldots,d} (b_i - a_i)  \\
\label{eq:foobar}
&\leq 
\sqrt{d} \left(\frac{\rho+1/\rho}{2} - 1\right) \operatorname*{diam} ([\bs{a},\bs{b}]). 
\end{align}
Using (\ref{eq:foobar}) and a triangle inequality, we obtain from (\ref{eq:lemma:admissibility-complex-norm-10}) 
for $\rho > 1$ sufficiently small 
\begin{align*}
 \operatorname*{dist}(\operatorname*{Re}\Omega_x, \operatorname*{Re} \Omega_y) & \ge 
 \operatorname*{dist}({[\bs{a}^x,\bs{b}^x]}, {[\bs{a}^y,\bs{b}^y]}) - 
2 \sqrt{d} \left(\frac{\rho+1/\rho}{2}-1\right) D  - 2 \sqrt{d} \varepsilon \delta_B 
\\ & 
\ge \left(1-2\sqrt{d} \varepsilon  - {2 \sqrt{d} } \eta \left(\frac{\rho + 1/\rho}{2} - 1\right)\right)  \delta_B. 
\end{align*}
Consider now the set 
$$
\omega:= \{x - y\,|\, x \in \operatorname*{Re}\Omega_x, \ y \in \operatorname*{Re}\Omega_y\}
$$
and ${\mathcal C}_\omega$ as defined by (\ref{eq:cone}). 
Note that for $z \in \Omega_x$ we have 
\begin{align*}
\operatorname*{Re} z &\in \operatorname*{Re}\Omega_x,\\
\left| \operatorname*{Im} z_i \right|
&\leq \frac{\rho - 1/\rho}{2} D + \varepsilon \delta_B
 \leq \left(\eta \frac{\rho - 1/\rho}{2} + \varepsilon\right)\delta_B
\qquad\text{ for all } i=1,\ldots,d,
\end{align*}
with an analogous statement about 
$\zeta \in \Omega_y$. We conclude for 
$z \in \Omega_x$ and 
$\zeta \in \Omega_y$ that the difference 
\begin{align*}
z - \zeta   = \underbrace{\operatorname*{Re} (z - \zeta)}_{=:\alpha \in \omega} + \bi \operatorname*{Im} (z - \zeta) 
\end{align*}
satisfies 
\begin{align*}
\|\alpha\| &\ge 
\operatorname*{dist}(\operatorname*{Re}\Omega_x,\operatorname*{Re}\Omega_y) 
\ge 
\left(1 - 2 \sqrt{d} \varepsilon - 2 \sqrt{d} \eta \left(\frac{\rho + 1/\rho}{2}-1\right)\right) \delta_B ,  \\
\sum_{i=1}^d |\operatorname*{Im} (z_i - \zeta_i)|^2 
& \leq 2 \sum_{i=1}^d |\operatorname*{Im} z_i|^2 + |\operatorname*{Im} \zeta_i|^2 
\leq 4 d \left( \eta \frac{\rho-1/\rho}{2} + \varepsilon\right)^2 \delta_B^2. 
\end{align*}
Hence, $z-\zeta \in {\mathcal C}_\omega$ provided 
\begin{align*}
  \frac{4 d \left(\eta(\rho-1/\rho)/2+\varepsilon\right)^2 \delta_B^2}
       {\|\alpha\|^2}
  &\leq \frac{4d(\eta(\rho-1/\rho)/2+\epsilon)^2}
             {1-2 \sqrt{d} \varepsilon-2\sqrt{d}\eta ((\rho+1/\rho)/2-1)}
   \leq (\sqrt{2} - 1)^2. 
\end{align*}
Selecting first $\varepsilon$ sufficient small and then $\rho$ sufficiently close to $1$, this last condition 
can be ensured.  
By Lemma~\ref{lemma:holomorphic-extension-euclidean-norm}, we conclude the desired analyticity assertion
as well as the upper bound (\ref{eq:lemma:admissibility-complex-norm-20}) on $|\enorm(x-y)|$. For the lower 
bound (\ref{eq:lemma:admissibility-complex-norm-30}), we use 
\begin{align*}
|\enorm(z-\zeta)| &\ge \sqrt{\left| \operatorname*{Re} \sum_{i=1}^d (z_i - \zeta_i)^2\right|} = 
\sqrt{\left|\sum_{i=1}^d (\operatorname*{Re} (z_i - \zeta_i))^2 - (\operatorname*{Im} (z_i - \zeta_i))^2\right|}
\\
& \ge \sqrt{\|\alpha\|^2 - 4 d \left(\eta(\rho-1/\rho)/2+\varepsilon\right)^2 \delta_B^2}
\ge C \delta_B, 
\end{align*}
where $C > 0$ depends only on $\eta$ and $d$. 
\end{proof}

%
%
\begin{remark}
 
The condition (\ref{eq:eta_2}) can be met. To illustrate this, assume that the basis functions 
$(\varphi_i)_{i=1}^N$ all have support of size $O(h)$. For each index $i$, fix a ``proxy''
$x_i \in \operatorname*{supp} \varphi_i$, e.g., the barycenter of $\operatorname*{supp} \varphi_i$.  
Consider a tree $\mathcal{T}_\Idx$ for the index set $\Idx=\{1,\ldots,N\}$ that results from organizing 
the proxies $(x_i)_{i=1}^N$ in a tree based on a standard octree. In particular, to each 
cluster $\sigma \in \mathcal{T}_\Idx$ we can associate a box $B^{oct}_{\sigma}$ of the octree such that 
$$
i \in \sigma \quad \Longleftrightarrow\quad x_i \in B^{oct}_{\sigma}. 
$$
The tree $\mathcal{T}_\Idx$, which was created using the proxies, is also a cluster tree for the shape 
functions $\varphi_i$. 
The bounding box $B_\sigma\supset B^{oct}_\sigma$ for $\sigma$ can be chosen close to $B^{oct}_\sigma$ 
in the sense that 
$$
\operatorname*{diam}_i B^{oct}_\sigma \leq 
\operatorname*{diam}_i B_\sigma \leq 
\operatorname*{diam}_i B^{oct}_\sigma + C h. 
$$
This allows us to show that (\ref{eq:eta_2}) and (\ref{eq:q}) can be met if the leaf size is sufficiently large: 
For $\sigma$ and $\sigma' \in \sons(\sigma)$ we compute
\begin{align*}
\frac{\operatorname*{diam}_i B_{\sigma'}} 
     {\operatorname*{diam}_i B_{\sigma}} 
\leq 
\frac{\operatorname*{diam}_i B^{oct}_{\sigma'}+ C h} 
     {\operatorname*{diam}_i B^{oct}_{\sigma} } = 
\frac{1}{2} + C 
\frac{h} 
     {\operatorname*{diam}_i B^{oct}_{\sigma} }.
\end{align*}
This last expression can be made $<1$ if the leaves are not too small, i.e., if the smallest boxes 
of the octree are large compared to $h$. 
Let us consider (\ref{eq:eta_2}). We assume that for 
$\sigma \in {\mathcal T}^{L^\text{middle}_\ell+i}_\Idx (\widehat \sigma)$ and 
$\tau\in {\mathcal T}^{L^\text{middle}_\ell-i}_\Idx (\widehat \tau)$ we have 
$$
\kappa \operatorname*{diam} B^{oct}_\sigma \operatorname*{diam}B^{oct}_\tau 
\lesssim \operatorname*{dist} (B^{oct}_{\widehat \sigma},B^{oct}_{\widehat \tau}) 
\sim 
\operatorname*{dist} (B_{\widehat \sigma},B_{\widehat \tau}). 
$$
Furthermore, 
we observe $h \lesssim \operatorname*{diam} B_{\widehat \sigma}^{oct}$ and 
$h \lesssim \operatorname*{diam} B_{\widehat \tau}^{oct}$ so that 
we can estimate 
\begin{align*}
\kappa \operatorname*{diam} B_\sigma \operatorname*{diam}B_\tau 
&\leq 
\kappa (\operatorname*{diam} B^{oct}_\sigma + Ch) (\operatorname*{diam}B^{oct}_\tau +Ch) \\
&\leq 
\kappa \operatorname*{diam} B^{oct}_\sigma \operatorname*{diam}B^{oct}_\tau 
+ 2\kappa h \max\{\operatorname*{diam} B^{oct}_{\sigma}, \operatorname*{diam} B^{oct}_\tau\} + C \kappa h h 
\lesssim \operatorname*{dist}(B_{\widehat \sigma}, B_{\widehat \tau}). 
\end{align*}
\eremk
\end{remark}

\section{Numerical experiment}
\label{sec:numerics}

In view of the main result of Theorem~\ref{thm:main}, we expect the
butterfly approximation $\widetilde{\mathbf{K}}$ to converge exponentially
to $\mathbf{K}$ as the degree $m$ of the interpolation polynomials is increased.

In order to get an impression of the convergence properties of the
approximation scheme, we apply the butterfly approximation to the 
discretization (\ref{eq:stiffness-matrix-helmholtz}) of the Helmholtz 
single layer operator. The surface $\Gamma$ is taken to be the polyhdral 
approximation of the unit sphere $\{ x\in\BbbR^3\ :\ \|x\|_2=1 \}$ that is obtained
by applying regular refinement to the
sides of the double pyramid $\{ x\in\BbbR^3\ :\ \|x\|_1=1 \}$ and
projecting the resulting vertices to the sphere. These polyhedra form quasi-uniform
meshes. The test and trial spaces consist of piecewise constant functions 
on these meshes, taking the characteristic functions of the elements are the basis functions 
$\varphi_i$. 
When forming the stiffness matrix the singular integrals are evaluated by
the quadratures described in \cite{ERSA98,SASC11}, while regular integrals
are evaluated by tensor quadrature in combination with the
Duffy transformation.

The cluster tree $\mathcal{T}_\Idx$ is constructed by finding an
axis-parallel bounding box containing the entire surface $\Gamma$
and bisecting it simultaneously in all coordinate directions.
Due to this construction, clusters can have at most eight sons
(empty boxes are discarded) and the diameters of the son boxes
are approximately half the diameter of their father.
The subdivision algorithm stops on the first level containing
a cluster with not more than $32$ indices.
The block tree is constructed by the standard admissibility
condition using the parameter $\eta_1=1$.

%
%
\begin{table}
  \begin{equation*}
    \begin{array}{r|rr|rr|rr}
        & \multicolumn{2}{c|}{n=32\,768,\kappa=16}
        & \multicolumn{2}{c|}{n=73\,728,\kappa=24}
        & \multicolumn{2}{c}{n=131\,072,\kappa=32}\\
      m & \|\mathbf{K}-\widetilde{\mathbf{K}}\|_2
        & \text{factor}
        & \|\mathbf{K}-\widetilde{\mathbf{K}}\|_2
        & \text{factor}
        & \|\mathbf{K}-\widetilde{\mathbf{K}}\|_2
        & \text{factor}\\
      \hline
      0 & 3.09_{-6\phantom{0}} &
        & 1.35_{-6\phantom{0}} & 
        & 8.69_{-7\phantom{0}} & \\
      1 & 5.60_{-7\phantom{0}} & 5.51
        & 2.05_{-7\phantom{0}} & 6.58
        & 2.37_{-7\phantom{0}} & 3.67\\
      2 & 5.02_{-8\phantom{0}} & 11.16
        & 1.28_{-8\phantom{0}} & 15.99
        & 2.46_{-8\phantom{0}} & 9.61\\
      3 & 4.64_{-9\phantom{0}} & 10.82
        & 1.10_{-9\phantom{0}} & 11.63
        & 2.68_{-9\phantom{0}} & 9.20\\
      4 & 4.61_{-10} & 10.06
        & 9.80_{-11} & 11.26
        & 3.20_{-10} & 8.38
    \end{array}
  \end{equation*}

  \caption{Estimated spectral errors for the butterfly approximation}
  \label{table:spectral_slp}
\end{table}

%
%
\begin{table}
  \begin{equation*}
    \begin{array}{r|rr|rr|rr}
        & \multicolumn{2}{c|}{n=32\,768,\kappa=16}
        & \multicolumn{2}{c|}{n=73\,728,\kappa=24}
        & \multicolumn{2}{c}{n=131\,072,\kappa=32}\\
      m & \|\mathbf{K}-\widetilde{\mathbf{K}}\|_F
        & \text{factor}
        & \|\mathbf{K}-\widetilde{\mathbf{K}}\|_F
        & \text{factor}
        & \|\mathbf{K}-\widetilde{\mathbf{K}}\|_F
        & \text{factor}\\
      \hline
      0 & 3.57_{-5\phantom{0}} &
        & 2.08_{-5\phantom{0}} & 
        & 1.32_{-5\phantom{0}} & \\
      1 & 3.82_{-6\phantom{0}} & 9.37
        & 2.31_{-6\phantom{0}} & 9.00
        & 1.85_{-6\phantom{0}} & 7.17\\
      2 & 2.91_{-7\phantom{0}} & 13.13
        & 1.39_{-7\phantom{0}} & 16.62
        & 1.41_{-7\phantom{0}} & 13.06\\
      3 & 2.47_{-8\phantom{0}} & 11.77
        & 9.82_{-9\phantom{0}} & 14.17
        & 1.31_{-8\phantom{0}} & 10.80\\
      4 & 2.40_{-9\phantom{0}} & 10.29
        & 7.86_{-10} & 12.49
        & 1.32_{-9\phantom{0}} & 9.91
    \end{array}
  \end{equation*}

  \caption{Frobenius errors for the butterfly approximation}
  \label{table:frobenius_slp}
\end{table}

The butterfly approximation is constructed by tensor product Chebyshev
interpolation.
Table~\ref{table:spectral_slp} lists the spectral errors for
$n\in\{32\,768,73\,728,131\,072\}$ triangles with wave numbers
$\kappa\in\{16,24,32\}$, corresponding to $\kappa h \approx 0.6$,
i.e., approximately ten mesh elements per wavelength.
The spectral errors are estimated by applying the power iteration
to approximate the spectral radius of the Gramian of the error.
We can see that the error reduction factors are quite stable
and close to $10$.
Table~\ref{table:frobenius_slp} lists the error in the Frobenius
norm.
The Frobenius error is computed by direct comparison with the
exact matrix $\mathbf{K}$.
We observe convergence at a rate close to $10$.

\appendix 
\section{Proofs of auxiliary results}
\label{sec:appendix} 
\begin{numberedproof}{of Lemma~\ref{lemma:inclusion}}

We consider $h \in (0,1]$ and the ellipse ${\ellipse}^{[-1,-1+2h]}_{\rho_0}$. We note that once we find $\rho_1$ such that 
${\ellipse}_{\rho_1} \supset {\ellipse}^{[-1,-1+2h]}_{\rho_0}$, then by symmetry we also have 
${\ellipse}_{\rho_1} \supset {\ellipse}^{[1,1-2h]}_{\rho_0}$ and then, by convexity of ${\ellipse}_{\rho_1}$ also 
${\ellipse}_{\rho_1} \supset {\ellipse}^{[x-h,x+h]}_{\rho_0}$ for any $x \in [-1+h,1-h]$. 
This justifies our restricting to ${\ellipse}^{[-1,-1+2h]}_{\rho_0}$. 
In Cartesian coordinates, this ellipse is given by 
\begin{gather*}
\left( \frac{x - (-1 + h)}{a}\right)^2 + \left( \frac{y}{b}\right)^2 = 1,
\qquad
a = h \frac{\rho_0 + 1/\rho_0}{2}, 
\qquad
b = h \frac{\rho_0 - 1/\rho_0}{2},
\end{gather*}
and $x \in [-1+h-a,-1+h+a]$. 
The value $\rho_1>1$ such that 
${\ellipse}^{[-1, -1+2h]}_{\rho_0} \subset {\ellipse}_{\rho_1}$ has to satisfy 
\begin{equation}
\label{eq:lemma:inclusion-10}
\sup_{x \in [-1+h-a,-1+h+a]} \sqrt{(x-1)^2 + y(x)^2} + \sqrt{(x+1)^2 + y(x)^2} =: \widetilde M \stackrel{!}{\leq} \rho_1 + 1/\rho_1, 
\end{equation}
where 
$$
y(x)^2 = b^2 - \frac{b^2}{a^2} (x - (-1+h))^2. 
$$
{\em Claim:} The maximum value $\widetilde M$ in (\ref{eq:lemma:inclusion-10}) is attained at the left 
endpoint $x = -1+h-a$ and is given by 
\begin{equation}
\widetilde M = 2 r_x \left( \frac{1}{r_x} + h \left( 1 - \frac{1}{r_x}\right)\right), 
\qquad r_x:= \frac{1}{2} \left(\rho_0 + \frac{1}{\rho_0}\right). 
\end{equation} 
To compute the supremum in (\ref{eq:lemma:inclusion-10}) we introduce the function 
$f:= s_1 + s_2$ with 
\begin{align*}
s_1(x) &= \sqrt{(x-1)^2 + b^2 - (b/a)^2 (x+1-h)^2}, & 
s_2(x) &\strut = \sqrt{(x+1)^2 + b^2 - (b/a)^2 (x+1-h)^2}.
\end{align*}
The special structure of the values $a$ and $b$ implies that $s_2$ is actually a polynomial: 
\begin{equation*}
s_2(x) = \sqrt{1 - \frac{b^2}{a^2}} (x+1) + \sqrt{b^2 - \frac{b^2}{a^2} h^2}. 
\end{equation*}
We compute $f$ at the endpoints: 
\begin{subequations}
\label{eq:f-endpoints}
\begin{align*}
f(-1+h-a) &= 2 + a - h + \sqrt{1-b^2/a^2} (h-a) + \sqrt{b^2 - h^2 b^2/a^2}, \\
\nonumber 
f(-1+h+a) &= \sqrt{(a - 2 +h)^2} + \sqrt{1-b^2/a^2}(h+a) + \sqrt{b^2 - h^2 b^2/a^2}. 
\end{align*}
For the left endpoint, a direct calculation yields 
\begin{equation}
\label{eq:f-endpoints-a}
\frac{f(-1+h-a)}{2 r_x} = \frac{1}{r_x} + h \left(1 - \frac{1}{r_x}\right). 
\end{equation}
For the right endpoint, we get similar, simplified formulas, distinguishing the caes 
$a - 2 + h\ge 0$ and 
$a - 2 + h\leq 0$:  
\begin{equation}
\label{eq:f-endpoints-b}
\frac{f(-1+h+a)}{2 r_x} = 
\begin{cases} 
- \frac{1}{r_x} + h \left(1 + \frac{1}{r_x}\right) 
& \mbox{ if $a -  2 + h \ge 0$}, \\
\frac{1}{r_x} & \mbox{ if $a - 2 + h \leq 0$ }
\end{cases} 
\end{equation}
\end{subequations} 
{}From $h \in (0,1]$ and (\ref{eq:f-endpoints-a}), (\ref{eq:f-endpoints-b}) we obtain 
\begin{equation}
\label{eq:f-endpoints-max-left}
\max\{ f(-1+h-a),f(-1+h+a)\}  = f(-1+h-a). 
\end{equation}

We are now ready for a further analysis, for which 
we distinguish the cases that $s_1$ is convex or concave. 
Indeed, only these two cases can occur since the function $s_1$ is the square root of a polynomial of degree $2$, 
and a calculation shows that 
$$
\frac{d^2}{dz^2} \sqrt{\alpha z^2 + \beta z + \gamma} = \frac{4 \alpha \gamma -\beta^2}{4 (\alpha z^2 + \beta z + \gamma)^{3/2}}, 
$$
so that $s_1^{\prime\prime}$ has a sign. We write $s_1(x) = \sqrt{\alpha (x+1-h)^2 + \beta (x+1-h) + \gamma}$ with 
$$
\alpha = 1 - \left(\frac{b}{a}\right)^2, \quad \beta = -2 (2-h), \quad \gamma = (2-h)^2 + b^2 . 
$$
\emph{The case of $s_1$ concave:} This case is characterized by  $4 \alpha \gamma - \beta^2 \leq 0$, i.e., 
\begin{equation}
\label{eq:lemma:inclusion-101} 
- \left(\frac{b}{a}\right)^2 (2 - h)^2 + b^2 \left(1 - \left(\frac{b}{a}\right)^2\right) \leq 0 
\quad 
\Longleftrightarrow 
\quad 
- (2-h+a)^2 + 2a (2-h+a)  - b^2 \leq 0. 
\end{equation}
Since $s_2$ is affine, the function $f = s_1 + s_2$ is concave. We compute $f^\prime(-1+h-a)$: 
\begin{align*}
f^\prime(-1+h-a) &= s_1^\prime(-1+h-a) + s_2^\prime(-1+h-a) = 
\left(-1 + \frac{b^2/a}{2 + a -h}\right) + \sqrt{1 - b^2/a^2}.  
\end{align*}
In order to see that $f^\prime(-1+h-a) \leq 0$, we observe that this last difference is the sum of two terms
of opposite sign. For $\xi$, $\eta \ge 0$ we have  $-\xi + \eta =(\eta^2 - \xi^2)/(\eta+\xi)$ so that the sign
of $-\xi + \eta$ is the same as the sign of $\eta^2 - \xi^2$. Hence, 
\begin{align*}
\operatorname*{sign} f^\prime(-1+h-a) &= \operatorname*{sign} 
\left[ 
- \left(1 - \frac{b^2/a}{2 + a -h}\right)^2 + (1 - b^2/a^2)
\right] 
\\ & 
= \operatorname*{sign} 
\left[
 \frac{b^2/a^2}{(2+a-h)^2} 
\left( - (2+a-h)^2 + 2 a (2-h+a) - b^2
\right)
\right] 
\stackrel{(\ref{eq:lemma:inclusion-101})}{\leq} 0. 
\end{align*}
We conclude that $f$ has its maximum at the left endpoint $x = -1+h-a$. 
\newline 
\emph{The case of $s_1$ convex:} Since $s_1$ is convex and $s_2$  affine (and thus convex), 
the function  $f$ is convex and therefore attains its maximum at one of the endpoints. We get
\begin{align*}
\sup_{x \in [-1+h-a,-1+h+a]} f(x)  & = \max\{f(-1+h-a),f(-1+h+a)\}  \stackrel{(\ref{eq:f-endpoints-max-left})}{ = }
f(-1+h-a) = 
2 r_x \left( \frac{1}{r_x} + h\left(1 - \frac{1}{r_x}\right)\right). 
\end{align*}

The condition on $\rho_1$ is therefore $\widetilde M \leq \rho_1 + 1/\rho_1$, 
and hence the smallest possible $\rho_1$ is given by the condition 
\begin{equation}
\label{eq:lemma:inclusion-1000}
\frac{\rho_1+1/\rho_1}{2 r_x}  = 
\left( \frac{1}{r_x} + h\left(1 - \frac{1}{r_x}\right)\right).  
\end{equation}
Note that the right-hand side is $< 1$ for every $\rho_0 > 1$ and every $h \in (0,1)$. 
One can solve for $\rho_1$ for given $(\rho_0,h)$, i.e., solve  
the quadratic equation (\ref{eq:lemma:inclusion-1000}) for 
$\rho_1 = \rho_1(\rho_0,h)$. 
We first observe the asymptotic behavior of $\rho_1$: we have 
$\lim_{\rho_0 \rightarrow \infty} \rho_1(\rho_0,h)/\rho_0 = h < 1$. One can check the sightly stronger statement 
that for every $\widehat q \in (\overline{q},1)$ there is 
$\overline{\rho_0} > 1$ 
such that $\rho_1(\rho_0,h)/\rho_0 \leq \widehat q$ for all $h \in [0,\overline{q}] \subset [0,1)$ and 
all $\rho_0 > \overline{\rho_0}$. Hence, we are left with checking the finite range 
$(\rho_0,h) \in [\underline{\rho},\overline{\rho_0}] \times [0,\overline{q}]$. 
For that, we note that function $g: x \mapsto x +1/x$ is strictly monotone increasing. Noting 
$g(\rho_0) = 2 r_x$, the equation (\ref{eq:lemma:inclusion-1000}) takes the form 
$$
g(\rho_1) = 
\underbrace{ \left( \frac{1}{r_x} + h\left(1 - \frac{1}{r_x}\right)\right) 
           }_{< 1}
g(\rho_0), 
$$
from which we get in view of the strict monotonicity of $g$ that 
$\rho_1(\rho_0,h) < \rho_0$ for every $\rho_0 > 1$ and every $h \in [0,1)$. The continuity of the 
mapping $(\rho_0,h)  \mapsto \rho_1$, then implies the desired bound 
\begin{align*}
\sup_{\rho_0 \in [\underline{\rho}, \overline{\rho_0}] \times [0,\overline{q}]} \frac{\rho_1(\rho_0,h)}{\rho_0} < 1.   
\qedhere 
\end{align*}
\end{numberedproof}
%
%
\bibliographystyle{plain} 
\bibliography{hmatrix} \end{document}